\chardef\@x10\chardef\@xv60
\def\tcitime{
\def\@time{%
  \@minute\time\@hour\@minute\divide\@hour\@xv
  \ifnum\@hour<\@x 0\fi\the\@hour:%
  \multiply\@hour\@xv\advance\@minute-\@hour
  \ifnum\@minute<\@x 0\fi\the\@minute
  }}%
\def\QCTOpt[#1]#2{%
  \def\QCTOptB{#1}
  \def\QCTOptA{#2}
}
\def\QCTNOpt#1{%
  \def\QCTOptA{#1}
  \let\QCTOptB\empty
}
\def\Qct{%
  \@ifnextchar[{%
    \QCTOpt}{\QCTNOpt}
}
\def\QCBOpt[#1]#2{%
  \def\QCBOptB{#1}
  \def\QCBOptA{#2}
}
\def\QCBNOpt#1{%
  \def\QCBOptA{#1}
  \let\QCBOptB\empty
}
\def\Qcb{%
  \@ifnextchar[{%
    \QCBOpt}{\QCBNOpt}
}
\def\PrepCapArgs{%
  \ifx\QCBOptA\empty
    \ifx\QCTOptA\empty
      {}%
    \else
      \ifx\QCTOptB\empty
        {\QCTOptA}%
      \else
        [\QCTOptB]{\QCTOptA}%
      \fi
    \fi
  \else
    \ifx\QCBOptA\empty
      {}%
    \else
      \ifx\QCBOptB\empty
        {\QCBOptA}%
      \else
        [\QCBOptB]{\QCBOptA}%
      \fi
    \fi
  \fi
}
\def\GRAPHICSPS#1{%
 \ifcase\GRAPHICSTYPE
   \special{ps: #1}%
 \or
   \special{language "PS", include "#1"}%
 \fi
}%
\def\graffile#1#2#3#4{%
    \leavevmode
    \raise -#4 \BOXTHEFRAME{%
        \hbox to #2{\raise #3\hbox to #2{\null #1\hfil}}}%
}%
\def\draftbox#1#2#3#4{%
 \leavevmode\raise -#4 \hbox{%
  \frame{\rlap{\protect\tiny #1}\hbox to #2%
   {\vrule height#3 width\z@ depth\z@\hfil}%
  }%
 }%
}%
\newif\ifwasdraft
\def\GRAPHIC#1#2#3#4#5{%
 \ifnum\draft=\@ne\draftbox{#2}{#3}{#4}{#5}%
  \else\graffile{#1}{#3}{#4}{#5}%
  \fi
 }%
\def\addtoLaTeXparams#1{%
    \edef\LaTeXparams{\LaTeXparams #1}}%
\newif\ifBoxFrame \BoxFramefalse
\newif\ifOverFrame \OverFramefalse
\newif\ifUnderFrame \UnderFramefalse
\def\BOXTHEFRAME#1{%
   \hbox{%
      \ifBoxFrame
         \frame{#1}%
      \else
         {#1}%
      \fi
   }%
}
\def\doFRAMEparams#1{\BoxFramefalse\OverFramefalse\UnderFramefalse\readFRAMEparams#1\end}%
\def\readFRAMEparams#1{%
 \ifx#1\end%
  \let\next=\relax
  \else
  \ifx#1i\dispkind=\z@\fi
  \ifx#1d\dispkind=\@ne\fi
  \ifx#1f\dispkind=\tw@\fi
  \ifx#1t\addtoLaTeXparams{t}\fi
  \ifx#1b\addtoLaTeXparams{b}\fi
  \ifx#1p\addtoLaTeXparams{p}\fi
  \ifx#1h\addtoLaTeXparams{h}\fi
  \ifx#1X\BoxFrametrue\fi
  \ifx#1O\OverFrametrue\fi
  \ifx#1U\UnderFrametrue\fi
  \ifx#1w
    \ifnum\draft=1\wasdrafttrue\else\wasdraftfalse\fi
    \draft=\@ne
  \fi
  \let\next=\readFRAMEparams
  \fi
 \next
 }%
\def\IFRAME#1#2#3#4#5#6{%
      \bgroup
      \let\QCTOptA\empty
      \let\QCTOptB\empty
      \let\QCBOptA\empty
      \let\QCBOptB\empty
      #6%
      \parindent=0pt%
      \leftskip=0pt
      \rightskip=0pt
      \setbox0 = \hbox{\QCBOptA}%
      \@tempdima = #1\relax
      \ifOverFrame
          \typeout{This is not implemented yet}%
          \show\HELP
      \else
         \ifdim\wd0>\@tempdima
            \advance\@tempdima by \@tempdima
            \ifdim\wd0 >\@tempdima
               \textwidth=\@tempdima
               \setbox1 =\vbox{%
                  \noindent\hbox to \@tempdima{\hfill\GRAPHIC{#5}{#4}{#1}{#2}{#3}\hfill}\\%
                  \noindent\hbox to \@tempdima{\parbox[b]{\@tempdima}{\QCBOptA}}%
               }%
               \wd1=\@tempdima
            \else
               \textwidth=\wd0
               \setbox1 =\vbox{%
                 \noindent\hbox to \wd0{\hfill\GRAPHIC{#5}{#4}{#1}{#2}{#3}\hfill}\\%
                 \noindent\hbox{\QCBOptA}%
               }%
               \wd1=\wd0
            \fi
         \else
            \ifdim\wd0>0pt
              \hsize=\@tempdima
              \setbox1 =\vbox{%
                \unskip\GRAPHIC{#5}{#4}{#1}{#2}{0pt}%
                \break
                \unskip\hbox to \@tempdima{\hfill \QCBOptA\hfill}%
              }%
              \wd1=\@tempdima
           \else
              \hsize=\@tempdima
              \setbox1 =\vbox{%
                \unskip\GRAPHIC{#5}{#4}{#1}{#2}{0pt}%
              }%
              \wd1=\@tempdima
           \fi
         \fi
         \@tempdimb=\ht1
         \advance\@tempdimb by \dp1
         \advance\@tempdimb by -#2%
         \advance\@tempdimb by #3%
         \leavevmode
         \raise -\@tempdimb \hbox{\box1}%
      \fi
      \egroup%
}%
\def\DFRAME#1#2#3#4#5{%
 \begin{center}
     \let\QCTOptA\empty
     \let\QCTOptB\empty
     \let\QCBOptA\empty
     \let\QCBOptB\empty
     \ifOverFrame 
        #5\QCTOptA\par
     \fi
     \GRAPHIC{#4}{#3}{#1}{#2}{\z@}
     \ifUnderFrame 
        \nobreak\par #5\QCBOptA
     \fi
 \end{center}%
 }%
\def\FFRAME#1#2#3#4#5#6#7{%
 \begin{figure}[#1]%
  \let\QCTOptA\empty
  \let\QCTOptB\empty
  \let\QCBOptA\empty
  \let\QCBOptB\empty
  \ifOverFrame
    #4
    \ifx\QCTOptA\empty
    \else
      \ifx\QCTOptB\empty
        \caption{\QCTOptA}%
      \else
        \caption[\QCTOptB]{\QCTOptA}%
      \fi
    \fi
    \ifUnderFrame\else
      \label{#5}%
    \fi
  \else
    \UnderFrametrue%
  \fi
  \begin{center}\GRAPHIC{#7}{#6}{#2}{#3}{\z@}\end{center}%
  \ifUnderFrame
    #4
    \ifx\QCBOptA\empty
      \caption{}%
    \else
      \ifx\QCBOptB\empty
        \caption{\QCBOptA}%
      \else
        \caption[\QCBOptB]{\QCBOptA}%
      \fi
    \fi
    \label{#5}%
  \fi
  \end{figure}%
 }%
\def\makeactives{
  \catcode`\"=\active
  \catcode`\;=\active
  \catcode`\:=\active
  \catcode`\'=\active
  \catcode`\~=\active
}
   \gdef\activesoff{%
      \def"{\string"}
      \def;{\string;}
      \def:{\string:}
      \def'{\string'}
      \def~{\string~}
    }
\def\FRAME#1#2#3#4#5#6#7#8{%
 \bgroup
 \@ifundefined{bbl@deactivate}{}{\activesoff}
 \ifnum\draft=\@ne
   \wasdrafttrue
 \else
   \wasdraftfalse%
 \fi
 \def\LaTeXparams{}%
 \dispkind=\z@
 \def\LaTeXparams{}%
 \doFRAMEparams{#1}%
 \ifnum\dispkind=\z@\IFRAME{#2}{#3}{#4}{#7}{#8}{#5}\else
  \ifnum\dispkind=\@ne\DFRAME{#2}{#3}{#7}{#8}{#5}\else
   \ifnum\dispkind=\tw@
    \edef\@tempa{\noexpand\FFRAME{\LaTeXparams}}%
    \@tempa{#2}{#3}{#5}{#6}{#7}{#8}%
    \fi
   \fi
  \fi
  \ifwasdraft\draft=1\else\draft=0\fi{}%
  \egroup
 }%
\def\TEXUX#1{"texux"}
\long\def\QQQ#1#2{%
     \long\expandafter\def\csname#1\endcsname{#2}}%
\long\def\QQA#1#2{}%
\def\QTR#1#2{{\csname#1\endcsname #2}}
\def\EXPAND#1[#2]#3{}%
\def\NOEXPAND#1[#2]#3{}%
\def\LaTeXparent#1{}%
\def\ChildStyles#1{}%
\def\ChildDefaults#1{}%
\def\QTagDef#1#2#3{}%
\def\QQfnmark#1{\footnotemark}
\def\makeatletter\input gnuindex.sty\makeatother\makeindex{\makeatletter\input gnuindex.sty\makeatother\makeindex}%
\def\initial#1{\bigbreak{\raggedright\large\bf #1}\kern 2\p@\penalty3000}}%
 \def\abstract{%
  \if@twocolumn
   \section*{Abstract (Not appropriate in this style!)}%
   \else \small 
   \begin{center}{\bf Abstract\vspace{-.5em}\vspace{\z@}}\end{center}%
   \quotation 
   \fi
  }%
   \def\registered{\relax\ifmmode{}\r@gistered
                    \else$\m@th\r@gistered$\fi}%
 \def\r@gistered{^{\ooalign
  {\hfil\raise.07ex\hbox{$\scriptstyle\rm\text{R}$}\hfil\crcr
  \mathhexbox20D}}}}{}%
\newdimen\theight
\def\Column{%
 \vadjust{\setbox\z@=\hbox{\scriptsize\quad\quad tcol}%
  \theight=\ht\z@\advance\theight by \dp\z@\advance\theight by \lineskip
  \kern -\theight \vbox to \theight{%
   \rightline{\rlap{\box\z@}}%
   \vss
   }%
  }%
 }%
\def\qed{%
 \ifhmode\unskip\nobreak\fi\ifmmode\ifinner\else\hskip5\p@\fi\fi
 \hbox{\hskip5\p@\vrule width4\p@ height6\p@ depth1.5\p@\hskip\p@}%
 }%
\def\miss{\hbox{\vrule height2\p@ width 2\p@ depth\z@}}%
\def\tcol#1{{\baselineskip=6\p@ \vcenter{#1}} \Column}  %
\def\newfmtname{LaTeX2e}
\def\chkcompat{%
   \if@compatibility
   \else
     \usepackage{latexsym}
   \fi
}
  \DeclareOldFontCommand{\rm}{\normalfont\rmfamily}{\mathrm}
  \DeclareOldFontCommand{\sf}{\normalfont\sffamily}{\mathsf}
  \DeclareOldFontCommand{\tt}{\normalfont\ttfamily}{\mathtt}
  \DeclareOldFontCommand{\bf}{\normalfont\bfseries}{\mathbf}
  \DeclareOldFontCommand{\it}{\normalfont\itshape}{\mathit}
  \DeclareOldFontCommand{\sl}{\normalfont\slshape}{\@nomath\sl}
  \DeclareOldFontCommand{\sc}{\normalfont\scshape}{\@nomath\sc}
\def\alpha{{\Greekmath 010B}}%
\def\beta{{\Greekmath 010C}}%
\def\gamma{{\Greekmath 010D}}%
\def\delta{{\Greekmath 010E}}%
\def\epsilon{{\Greekmath 010F}}%
\def\zeta{{\Greekmath 0110}}%
\def\eta{{\Greekmath 0111}}%
\def\theta{{\Greekmath 0112}}%
\def\iota{{\Greekmath 0113}}%
\def\kappa{{\Greekmath 0114}}%
\def\lambda{{\Greekmath 0115}}%
\def\mu{{\Greekmath 0116}}%
\def\nu{{\Greekmath 0117}}%
\def\xi{{\Greekmath 0118}}%
\def\pi{{\Greekmath 0119}}%
\def\rho{{\Greekmath 011A}}%
\def\sigma{{\Greekmath 011B}}%
\def\tau{{\Greekmath 011C}}%
\def\upsilon{{\Greekmath 011D}}%
\def\phi{{\Greekmath 011E}}%
\def\chi{{\Greekmath 011F}}%
\def\psi{{\Greekmath 0120}}%
\def\omega{{\Greekmath 0121}}%
\def\varepsilon{{\Greekmath 0122}}%
\def\vartheta{{\Greekmath 0123}}%
\def\varpi{{\Greekmath 0124}}%
\def\varrho{{\Greekmath 0125}}%
\def\varsigma{{\Greekmath 0126}}%
\def\varphi{{\Greekmath 0127}}%
\def\nabla{{\Greekmath 0272}}
\def\FindBoldGroup{%
   {\setbox0=\hbox{$\mathbf{x\global\edef\theboldgroup{\the\mathgroup}}$}}%
}
\def\Greekmath#1#2#3#4{%
    \if@compatibility
        \ifnum\mathgroup=\symbold
           \mathchoice{\mbox{\boldmath$\displaystyle\mathchar"#1#2#3#4$}}%
                      {\mbox{\boldmath$\textstyle\mathchar"#1#2#3#4$}}%
                      {\mbox{\boldmath$\scriptstyle\mathchar"#1#2#3#4$}}%
                      {\mbox{\boldmath$\scriptscriptstyle\mathchar"#1#2#3#4$}}%
        \else
           \mathchar"#1#2#3#4%
        \fi 
    \else 
        \FindBoldGroup
        \ifnum\mathgroup=\theboldgroup 
           \mathchoice{\mbox{\boldmath$\displaystyle\mathchar"#1#2#3#4$}}%
                      {\mbox{\boldmath$\textstyle\mathchar"#1#2#3#4$}}%
                      {\mbox{\boldmath$\scriptstyle\mathchar"#1#2#3#4$}}%
                      {\mbox{\boldmath$\scriptscriptstyle\mathchar"#1#2#3#4$}}%
        \else
           \mathchar"#1#2#3#4%
        \fi     	    
	  \fi}
\newif\ifGreekBold  \GreekBoldfalse
\let\SAVEPBF=\pbf
\def\pbf{\GreekBoldtrue\SAVEPBF}%
  \newcounter{equationnumber}  
  \def\mathletters{%
     \addtocounter{equation}{1}
     \edef\@currentlabel{\theequation}%
     \setcounter{equationnumber}{\c@equation}
     \setcounter{equation}{0}%
     \edef\theequation{\@currentlabel\noexpand\alph{equation}}%
  }
    \def\BibTeX{{\rm B\kern-.05em{\sc i\kern-.025em b}\kern-.08em
                 T\kern-.1667em\lower.7ex\hbox{E}\kern-.125emX}}}{}%
\def\AmS{{\protect\usefont{OMS}{cmsy}{m}{n}%
                A\kern-.1667em\lower.5ex\hbox{M}\kern-.125emS}}}{}%
\let\DOTSI\relax
\def\RIfM@{\relax\ifmmode}%
\def\FN@{\futurelet\next}%
\def\iint{\DOTSI\intno@\tw@\FN@\ints@}%
\def\iiint{\DOTSI\intno@\thr@@\FN@\ints@}%
\def\iiiint{\DOTSI\intno@4 \FN@\ints@}%
\def\idotsint{\DOTSI\intno@\z@\FN@\ints@}%
\def\ints@{\findlimits@\ints@@}%
\newif\iflimtoken@
\newif\iflimits@
\def\findlimits@{\limtoken@true\ifx\next\limits\limits@true
 \else\ifx\next\nolimits\limits@false\else
 \limtoken@false\ifx\ilimits@\nolimits\limits@false\else
 \ifinner\limits@false\else\limits@true\fi\fi\fi\fi}%
\def\multint@{\int\ifnum\intno@=\z@\intdots@                          
 \else\intkern@\fi                                                    
 \ifnum\intno@>\tw@\int\intkern@\fi                                   
 \ifnum\intno@>\thr@@\int\intkern@\fi                                 
 \int}
\def\multintlimits@{\intop\ifnum\intno@=\z@\intdots@\else\intkern@\fi
 \ifnum\intno@>\tw@\intop\intkern@\fi
 \ifnum\intno@>\thr@@\intop\intkern@\fi\intop}%
\def\intic@{%
    \mathchoice{\hskip.5em}{\hskip.4em}{\hskip.4em}{\hskip.4em}}%
\def\negintic@{\mathchoice
 {\hskip-.5em}{\hskip-.4em}{\hskip-.4em}{\hskip-.4em}}%
\def\ints@@{\iflimtoken@                                              
 \def\ints@@@{\iflimits@\negintic@
   \mathop{\intic@\multintlimits@}\limits                             
  \else\multint@\nolimits\fi                                          
  \eat@}
 \else                                                                
 \def\ints@@@{\iflimits@\negintic@
  \mathop{\intic@\multintlimits@}\limits\else
  \multint@\nolimits\fi}\fi\ints@@@}%
\def\intkern@{\mathchoice{\!\!\!}{\!\!}{\!\!}{\!\!}}%
\def\plaincdots@{\mathinner{\cdotp\cdotp\cdotp}}%
\def\intdots@{\mathchoice{\plaincdots@}%
 {{\cdotp}\mkern1.5mu{\cdotp}\mkern1.5mu{\cdotp}}%
 {{\cdotp}\mkern1mu{\cdotp}\mkern1mu{\cdotp}}%
 {{\cdotp}\mkern1mu{\cdotp}\mkern1mu{\cdotp}}}%
\def\RIfM@{\relax\protect\ifmmode}
\def\text{\RIfM@\expandafter\text@\else\expandafter\mbox\fi}
\let\nfss@text\text
\def\text@#1{\mathchoice
   {\textdef@\displaystyle\f@size{#1}}%
   {\textdef@\textstyle\tf@size{\firstchoice@false #1}}%
   {\textdef@\textstyle\sf@size{\firstchoice@false #1}}%
   {\textdef@\textstyle \ssf@size{\firstchoice@false #1}}%
   \glb@settings}
\def\textdef@#1#2#3{\hbox{{%
                    \everymath{#1}%
                    \let\f@size#2\selectfont
                    #3}}}
\newif\iffirstchoice@
\def\Let@{\relax\iffalse{\fi\let\\=\cr\iffalse}\fi}%
\def\vspace@{\def\vspace##1{\crcr\noalign{\vskip##1\relax}}}%
\def\multilimits@{\bgroup\vspace@\Let@
 \baselineskip\fontdimen10 \scriptfont\tw@
 \advance\baselineskip\fontdimen12 \scriptfont\tw@
 \lineskip\thr@@\fontdimen8 \scriptfont\thr@@
 \lineskiplimit\lineskip
 \vbox\bgroup\ialign\bgroup\hfil$\m@th\scriptstyle{##}$\hfil\crcr}%
\def\Sb{_\multilimits@}%
\def\endSb{\crcr\egroup\egroup\egroup}%
\def\Sp{^\multilimits@}%
\newdimen\ex@
\def\rightarrowfill@#1{$#1\m@th\mathord-\mkern-6mu\cleaders
 \hbox{$#1\mkern-2mu\mathord-\mkern-2mu$}\hfill
 \mkern-6mu\mathord\rightarrow$}%
\def\leftarrowfill@#1{$#1\m@th\mathord\leftarrow\mkern-6mu\cleaders
 \hbox{$#1\mkern-2mu\mathord-\mkern-2mu$}\hfill\mkern-6mu\mathord-$}%
\def\leftrightarrowfill@#1{$#1\m@th\mathord\leftarrow
\mkern-6mu\cleaders
 \hbox{$#1\mkern-2mu\mathord-\mkern-2mu$}\hfill
 \mkern-6mu\mathord\rightarrow$}%
\def\overrightarrow{\mathpalette\overrightarrow@}%
\def\overrightarrow@#1#2{\vbox{\ialign{##\crcr\rightarrowfill@#1\crcr
 \noalign{\kern-\ex@\nointerlineskip}$\m@th\hfil#1#2\hfil$\crcr}}}%
\def\overleftarrow{\mathpalette\overleftarrow@}%
\def\overleftarrow@#1#2{\vbox{\ialign{##\crcr\leftarrowfill@#1\crcr
 \noalign{\kern-\ex@\nointerlineskip}$\m@th\hfil#1#2\hfil$\crcr}}}%
\def\overleftrightarrow{\mathpalette\overleftrightarrow@}%
\def\overleftrightarrow@#1#2{\vbox{\ialign{##\crcr
   \leftrightarrowfill@#1\crcr
 \noalign{\kern-\ex@\nointerlineskip}$\m@th\hfil#1#2\hfil$\crcr}}}%
\def\underrightarrow{\mathpalette\underrightarrow@}%
\def\underrightarrow@#1#2{\vtop{\ialign{##\crcr$\m@th\hfil#1#2\hfil
  $\crcr\noalign{\nointerlineskip}\rightarrowfill@#1\crcr}}}%
\def\underleftarrow{\mathpalette\underleftarrow@}%
\def\underleftarrow@#1#2{\vtop{\ialign{##\crcr$\m@th\hfil#1#2\hfil
  $\crcr\noalign{\nointerlineskip}\leftarrowfill@#1\crcr}}}%
\def\underleftrightarrow{\mathpalette\underleftrightarrow@}%
\def\underleftrightarrow@#1#2{\vtop{\ialign{##\crcr$\m@th
  \hfil#1#2\hfil$\crcr
 \noalign{\nointerlineskip}\leftrightarrowfill@#1\crcr}}}%
\def\qopnamewl@#1{\mathop{\operator@font#1}\nlimits@}
\let\nlimits@\displaylimits
\def\setboxz@h{\setbox\z@\hbox}
\def\varlim@#1#2{\mathop{\vtop{\ialign{##\crcr
 \hfil$#1\m@th\operator@font lim$\hfil\crcr
 \noalign{\nointerlineskip}#2#1\crcr
 \noalign{\nointerlineskip\kern-\ex@}\crcr}}}}
 \def\rightarrowfill@#1{\m@th\setboxz@h{$#1-$}\ht\z@\z@
  $#1\copy\z@\mkern-6mu\cleaders
  \hbox{$#1\mkern-2mu\box\z@\mkern-2mu$}\hfill
  \mkern-6mu\mathord\rightarrow$}
\def\leftarrowfill@#1{\m@th\setboxz@h{$#1-$}\ht\z@\z@
  $#1\mathord\leftarrow\mkern-6mu\cleaders
  \hbox{$#1\mkern-2mu\copy\z@\mkern-2mu$}\hfill
  \mkern-6mu\box\z@$}
\def\projlim{\qopnamewl@{proj\,lim}}
\def\injlim{\qopnamewl@{inj\,lim}}
\def\varinjlim{\mathpalette\varlim@\rightarrowfill@}
\def\varprojlim{\mathpalette\varlim@\leftarrowfill@}
\def\varliminf{\mathpalette\varliminf@{}}
\def\varliminf@#1{\mathop{\underline{\vrule\@depth.2\ex@\@width\z@
   \hbox{$#1\m@th\operator@font lim$}}}}
\def\varlimsup{\mathpalette\varlimsup@{}}
\def\varlimsup@#1{\mathop{\overline
  {\hbox{$#1\m@th\operator@font lim$}}}}
\def\align{\@verbatim \frenchspacing\@vobeyspaces \@alignverbatim
You are using the "align" environment in a style in which it is not defined.}
\let\csname endalign*\endcsname =\endtrivlist
\def\alignat{\@verbatim \frenchspacing\@vobeyspaces \@alignatverbatim
You are using the "alignat" environment in a style in which it is not defined.}
\let\csname endalignat*\endcsname =\endtrivlist
\def\xalignat{\@verbatim \frenchspacing\@vobeyspaces \@xalignatverbatim
You are using the "xalignat" environment in a style in which it is not defined.}
\let\csname endxalignat*\endcsname =\endtrivlist
\def\gather{\@verbatim \frenchspacing\@vobeyspaces \@gatherverbatim
You are using the "gather" environment in a style in which it is not defined.}
\let\csname endgather*\endcsname =\endtrivlist
\def\multiline{\@verbatim \frenchspacing\@vobeyspaces \@multilineverbatim
You are using the "multiline" environment in a style in which it is not defined.}
\let\csname endmultiline*\endcsname =\endtrivlist
\def\arrax{\@verbatim \frenchspacing\@vobeyspaces \@arraxverbatim
You are using a type of "array" construct that is only allowed in AmS-LaTeX.}
\def\tabulax{\@verbatim \frenchspacing\@vobeyspaces \@tabulaxverbatim
You are using a type of "tabular" construct that is only allowed in AmS-LaTeX.}
\let\csname endarrax*\endcsname =\endtrivlist
\let\csname endtabulax*\endcsname =\endtrivlist
\def\@@eqncr{\let\@tempa\relax
    \ifcase\@eqcnt \def\@tempa{& & &}\or \def\@tempa{& &}%
      \else \def\@tempa{&}\fi
     \@tempa
     \if@eqnsw
        \iftag@
           \@taggnum
        \else
           \@eqnnum\stepcounter{equation}%
        \fi
     \fi
     \global\tag@false
     \global\@eqnswtrue
     \global\@eqcnt\z@\cr}
 \def\endequation{%
     \ifmmode\ifinner 
      \iftag@
        \addtocounter{equation}{-1} 
        $\hfil
           \displaywidth\linewidth\@taggnum\egroup \endtrivlist
        \global\tag@false
        \global\@ignoretrue   
      \else
        $\hfil
           \displaywidth\linewidth\@eqnnum\egroup \endtrivlist
        \global\tag@false
        \global\@ignoretrue 
      \fi
     \else   
      \iftag@
        \addtocounter{equation}{-1} 
        \eqno \hbox{\@taggnum}
        \global\tag@false%
        $$\global\@ignoretrue
      \else
        \eqno \hbox{\@eqnnum}
        $$\global\@ignoretrue
      \fi
     \fi\fi
 } 
 \newif\iftag@ \tag@false
 \def\tag{\@ifnextchar*{\@tagstar}{\@tag}}
 \def\@tag#1{%
     \global\tag@true
     \global\def\@taggnum{(#1)}}
 \def\@tagstar*#1{%
     \global\tag@true
     \global\def\@taggnum{#1}%
}
\theoremstyle{definition}
\theoremstyle{remark}
\numberwithin{equation}{section}
\begin{document}
\title[Matricial Representations of $\Gamma _{N}^{2}$]{Matricial Representations of Certain Finitely Presented Groups Generated by
Order-2 Generators and Their Applications}
\author{Ryan Golden and Ilwoo Cho}
\address{Saint Ambrose Univ., Dept. of Math. \& Stat., 518 W. Locust St., Davenport,
Iowa, 52803, U. S. A. / }
\email{goldenryanm@sau.edu / choilwoo@sau.edu}
\date{}
\subjclass{05A19, 30E50, 37E99, 44A60.}
\keywords{Finitely Presented Groups, Generators of Order 2, Representations, Matrix Groups, the Lucas
Triangle, Lucas Numbers.}
\dedicatory{}
\thanks{}
\maketitle

\begin{abstract}
In this paper, we study matricial representations of certain finitely
presented groups $\Gamma _{N}^{2}$ with $N$-generators of order-2. As an
application, we consider a group algebra $\mathcal{A}_{2}$ of $\Gamma
_{2}^{2},$ under our representations. Specifically, we characterize the
inverses $g^{-1}$ of all group elements $g$ in $\Gamma _{2}^{2},$ in terms
of matrices in the group algebra $\mathcal{A}_{2}$. From the study of this
characterization, we realize there are close relations between the trace of
the radial operator of $\mathcal{A}_{2},$ and the Lucas numbers appearing in
the Lucas triangle.
\end{abstract}

\strut

\section{Introduction}

In this paper, we consider certain\emph{\ finite-dimensional representations 
}$\left( \Bbb{C}^{n},\text{ }\alpha ^{n}\right) $ of a\emph{\ finitely
presented group} $\Gamma _{N}^{2},$

(1.0.1)

\begin{center}
$\Gamma _{N}^{2}$ $\overset{def}{=}$ $\left\langle \{x_{1},\text{ ..., }%
x_{N}\},\text{ }\{x_{1}^{2}=...=x_{N}^{2}\}\right\rangle ,$
\end{center}

\strut where the generators $x_{1},$ ..., $x_{N}$ are \emph{noncommutative
indeterminants}. \strut i.e., an algebraic structure $\Gamma _{N}^{2}$ is the%
\emph{\ group} generated by $N$-\emph{generators} $x_{1},$ ..., $x_{N}$
equipped with its ``noncommutative'' binary operation ($\cdot $), satisfying

\begin{center}
$x_{1}^{2}$ $=$ $x_{2}^{2}$ $=$ ... $=$ $x_{N}^{2}$ $=$ $e_{N},$
\end{center}

where $e_{N}$ is the \emph{group-identity of} $\Gamma _{N}^{2}$ such that:

\begin{center}
$ge_{N}$ $=$ $g$ $=$ $e_{N}g,$ for all $g$ $\in $ $\Gamma _{N}^{2},$
\end{center}

\strut for all $N$ $\in $ $\Bbb{N}.$ So, one may understand a group $\Gamma
_{N}^{2}$ as the \emph{quotient group},

(1.0.2)

\begin{center}
$\Gamma _{N}^{2}$ $\overset{\text{Group}}{=}$ $\mathcal{F}_{N}$ $/$ $%
\{g_{j}^{2}=e:g_{j}$ are generators of $\mathcal{F}_{N}\},$
\end{center}

where $\mathcal{F}_{N}$ is the\emph{\ noncommutative }(\emph{non-reduced})%
\emph{\ free group }$\left\langle \{g_{j}\}_{j=1}^{N}\right\rangle $\emph{\
with }$N$\emph{-generators} $g_{1},$ ..., $g_{N}$, where $e$ is the
group-identity of the free group $\mathcal{F}_{N},$ and ``$\overset{\text{%
Group}}{=}$'' means ``being group-isomorphic'' (e.g., [1] and [2]).

By construction, all elements $g$ of the group $\Gamma _{N}^{2}$ are of
the form,

(1.0.3)

\begin{center}
$g$ $=$ $x_{i_{1}}^{k_{1}}$ $x_{i_{2}}^{k_{2}}$ ... $x_{i_{n}}^{k_{n}},$
\end{center}

for some

\begin{center}
$i_{1},$ ..., $i_{n}$ $\in $ $\{1,$ ..., $N\},$ and $k_{1},$ ..., $k_{n}$ $%
\in $ $\Bbb{Z},$
\end{center}

as non-reduced words in $\{x_{1},$ ..., $x_{N}\},$ where $x_{j}^{-1}$ means
the \emph{group-inverse of} $x_{j},$ and hence, $x_{j}^{-k}$ means $\left(
x_{j}^{-1}\right) ^{k},$ for all $k$ $\in $ $\Bbb{N},$ for all $j$ $=$ $1,$
..., $N.$

\strut In this paper, for an arbitrarily fixed $N$ $\in $ $\Bbb{N},$ we
establish and study certain $n$-dimensional Hilbert-space representations $%
\left( \Bbb{C}^{n},\text{ }\alpha ^{n}\right) $ of the groups $\Gamma
_{N}^{2},$ for all $n$ $\in $ $\Bbb{N}$ $\setminus $ $\{1\}.$ Under our
representations, each element $g$ of $\Gamma _{N}^{2}$ is understood as a
matrix $A_{g}$ acting on $\Bbb{C}^{n},$ for all $n$ $\in $ $\Bbb{N}.$
Moreover, if

\begin{center}
$g$ $=$ $x_{i_{1}}^{k_{1}}...x_{i_{l}}^{k_{l}}$ as in (1.0.3)
\end{center}

(as a ``non-reduced'' word in $\Gamma _{N}^{2}$), then there exists
corresponding matrices $A_{i_{1}},$ ..., $A_{i_{l}},$ such that

\begin{center}
$A_{g}$ $=$ $A_{i_{1}}^{k_{1}}...A_{i_{l}}^{k_{l}}$ in $M_{n}(\Bbb{C})$
\end{center}

(as a ``reduced'' word in $M_{n}(\Bbb{C})$), where $M_{n}(\Bbb{C})$ is the 
\emph{matricial algebra} consisting of all $(n\times n)$-matrices, for all $%
n $ $\in $ $\Bbb{N}.$

\subsection{\strut Motivation}

\strut The readers may skip reading this sub-section. Here, we simply want
to emphasize why we are interested in the groups $\Gamma _{N}^{2},$ for $N$ $%
\in $ $\Bbb{N}.$

\emph{Free probability} is a branch of \emph{operator algebra}. By
considering free-distributional data, one can establish operator-valued 
\emph{noncommutative} probability theory on (topological, or pure-algebraic) 
\emph{algebras} as in classical probability theory (e.g., [3], [4] and cited
papers therein). Also, such free-probabilistic data let us have structure
theorems of given algebras under \emph{free product} determined by given 
\emph{linear functionals}. Here, the \emph{independence} of classical
probability is replaced by the so-called \emph{freeness}. There are two
approaches in free probability theory: \emph{Voiculescu}'s original analytic
approach (e.g., [4]), and \emph{Speicher}'s combinatorial approach (e.g.,
[3]).

The second-named author, Cho, has considered connections between operator
algebra theory, and Hecke-algebraic number theory via free probability
recently, to provide tools for studying number-theoretic results with
operator-algebraic techniques, and vice versa, by establishing certain
representational and operator-theoretic models from combinatorial free
probability settings of [3].

Especially, in the series of research papers [5], [2] and [1], Cho
considered the \emph{free-probabilistic representations} of the \emph{Hecke
algebras }$\mathcal{H}(G_{p})$ generated by the \emph{generalized linear} ($%
(2\times 2)$-\emph{matricial}) \emph{groups} $G_{p}$ $=$ $GL_{2}(\Bbb{Q}%
_{p}) $ over the $p$-\emph{adic number fields} $\Bbb{Q}_{p},$ for \emph{%
primes} $p. $

In the frontier paper [5], Cho and Gillespie established free probability
models of certain subalgebras $\mathcal{H}_{Y_{p}}$ of the Hecke algebras $%
\mathcal{H}(G_{p})$ by defining suitable linear functionals on $\mathcal{H}%
(G_{p}).$ In particular, they constructed free-probabilistic structures
preserving number-theoretic data from $\mathcal{H}_{Y_{p}}.$

In [2], Cho extended free-probabilistic models of $\mathcal{H}_{Y_{p}}$ in
the sense of [6] to those of $\mathcal{H}(G_{p})$ fully, for all primes $p.$
On such models, the $C^{*}$-\emph{algebras} $\Bbb{H}(G_{p})$ were
constructed by realizing elements of $\mathcal{H}(G_{p})$ as operators under
free-probabilistic representations of $\mathcal{H}(G_{p}).$ And the
operator-theoretic properties of \emph{generating operators of} $\Bbb{H}%
(G_{p})$ were studied there.

In [1], by studying certain types of \emph{partial isometries} of $\Bbb{H}%
(G_{p}),$ Cho obtained the embedded non-abelian multiplicative groups $G_{N}$
of $\Bbb{H}(G_{p})$ generated by $N$-many partial isometries in the sense of
[1]. In particular, it was shown there that:

(1.1.1)

\begin{center}
$G_{N}$ $\overset{\text{Group}}{=}$ $\Gamma _{N}^{2},$ for all $N$ $\in $ $%
\Bbb{N},$
\end{center}

\strut where $\Gamma _{N}^{2}$ is in the sense of (1.0.1), satisfying
(1.0.2).

To study detailed algebraic properties of $G_{N}$ in $\mathcal{H}(G_{p}),$
and to investigate operator-algebraic properties of $C^{*}(G_{N})$ in $\Bbb{H%
}(G_{p}),$ he used the isomorphic group $\Gamma _{N}^{2}$ of (1.0.1), and
the corresponding group $C^{*}$-algebra $C^{*}\left( \Gamma _{N}^{2}\right)
. $

Thus, for authors, it is natural to be interested in the groups $\Gamma
_{N}^{2}$. In this paper, we study the groups $\Gamma _{N}^{2}$ of (1.0.1)
pure-algebraically (independent from those in [1], [2] and [5]).

\subsection{Overview}

In this paper, we concentrate on studying the groups $\Gamma _{N}^{2}$ of
(1.0.1) independently. The main purpose is to establish suitable \emph{%
Hilbert-space representations} (other than those of [1]) of $\Gamma
_{N}^{2}. $ Fundamentally, we construct ``finite-dimensional'' Hilbert-space
representations $\left( \Bbb{C}^{n},\text{ }\alpha ^{n}\right) $ of $\Gamma
_{N}^{2},$ for ``all'' $n$ $\in $ $\Bbb{N}$ $\setminus $ $\{1\}.$

Under our representation $\left( \Bbb{C}^{n},\text{ }\alpha ^{n}\right) $
for $n$ $\in $ $\Bbb{N},$ each group element $g$ $\in $ $\Gamma _{N}^{2}$ is
understood as a matrix $A_{g}$ in the matricial algebra $M_{n}(\Bbb{C}),$
consisting of all $(n\times n)$-matirices. To study an algebraic object $g$ $%
\in $ $\Gamma _{N}^{2},$ \strut we will investigate functional properties of
the corresponding matrices $A_{g}$ $\in $ $M_{n}(\Bbb{C})$ for $g,$ for all $%
n$ $\in $ $\Bbb{N}$ (See Section 2).

As application, we consider a group algebra $\mathcal{A}_{2}$ of $\Gamma
_{2}^{2},$ under our representations. Specifically, we characterize the
inverses $g^{-1}$ of all group elements $g$ in $\Gamma _{2}^{2},$ in terms
of matrices in the group algebra $\mathcal{A}_{2}$ generated by $\Gamma
_{2}^{2}$ (See Section 3).

From the study of this characterization of Section 3, we show that there are
close relations between the\emph{\ trace} of the powers $T^{n}$ of the \emph{%
radial operator} $T$ of $\mathcal{A}_{2},$ and the \emph{Lucas numbers} in
the \emph{Lucas triangle} (See Section 4).

\section{\strut Finite-Dimensional Representations of $\Gamma _{N}^{2}$}

In this section, we establish finite-dimensional Hilbert-space
representations of the finitely presented groups $\Gamma _{N}^{2}$ with $N$%
-generators of order-2 in the sense of (1.0.1). i.e.,

\begin{center}
$\Gamma _{N}^{2}$ $=$ $\left\langle \{x_{j}\}_{j=1}^{N},\text{ }%
\{x_{j}^{2}=e_{N}\}_{j=1}^{N}\right\rangle ,$
\end{center}

where $x_{1},$ ..., $x_{N}$ are noncommutative indeterminants (as
generators), and $e_{N}$ are the group-identities of $\Gamma _{N}^{2},$ for
all $N$ $\in $ $\Bbb{N}.$

Fix $N$ $\in $ $\Bbb{N}$ throughout this section and the corresponding group 
$\Gamma _{N}^{2}.$

\subsection{A Matrix Group $\frak{M}_{2}^{2}(N)$}

\strut Let $M_{2}(\Bbb{C})$ be the $(2\times 2)$-matricial algebra, and let

\begin{center}
$\Bbb{C}^{\times }$ $=$ $\Bbb{C}$ $\setminus $ $\{0\}$ in $\Bbb{C}.$
\end{center}

Suppose $\Bbb{C}$ $\times $ $\Bbb{C}^{\times }$ is the \emph{Cartesian
product} of $\Bbb{C}$ and $\Bbb{C}^{\times }.$ Define now a function

\begin{center}
$A_{2}$ $:$ $\Bbb{C}\times \Bbb{C}^{\times }$ $\rightarrow $ $M_{2}(\Bbb{C})$
\end{center}

by

(2.1.1)

\begin{center}
$A_{2}(a,$ $b)$ $=$ $\left( 
\begin{array}{cc}
a & b \\ 
\frac{1-a^{2}}{b} & -a
\end{array}
\right) ,$
\end{center}

for all $(a,$ $b)$ $\in $ $\Bbb{C}\times \Bbb{C}^{\times }.$ It is not
difficult to check that:

\begin{lemma}
Let $A$ be the map in the sense of (2.1.1). Then

(2.1.2)

\begin{center}
$\left( A_{2}(a,b)\right) ^{2}$ $=$ $I_{2},$ the identity matrix of $M_{2}(%
\Bbb{C}),$
\end{center}

for all $(a,$ $b)$ $\in $ $\Bbb{C}\times \Bbb{C}^{\times }.$
\end{lemma}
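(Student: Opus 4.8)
The plan is to establish (2.1.2) by the most direct route: computing the product $A_{2}(a,b)\,A_{2}(a,b)$ and reading off its four entries. Writing $c=\frac{1-a^{2}}{b}$ for brevity (this is well-defined precisely because $b\in \Bbb{C}^{\times}$), the off-diagonal entries of the product vanish by the antisymmetry built into the matrix: the $(1,2)$-entry is $ab+b(-a)=0$ and the $(2,1)$-entry is $ca+(-a)c=0$. The diagonal entries each reduce to $1$ through a single cancellation, namely the $(1,1)$-entry is $a^{2}+bc=a^{2}+(1-a^{2})=1$, and the $(2,2)$-entry is $cb+a^{2}=(1-a^{2})+a^{2}=1$. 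Hence the product equals $I_{2}$, which is exactly (2.1.2).

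An alternative, and arguably more illuminating, route uses the Cayley--Hamilton theorem. One first records that $A_{2}(a,b)$ has trace $a+(-a)=0$ and determinant $(a)(-a)-b\cdot \frac{1-a^{2}}{b}=-a^{2}-(1-a^{2})=-1$, again using $b\neq 0$ to make sense of the lower-left entry. Cayley--Hamilton for a $(2\times 2)$-matrix $M$ gives $M^{2}=\mathrm{tr}(M)\,M-\det (M)\,I_{2}$; substituting $\mathrm{tr}=0$ and $\det =-1$ yields $A_{2}(a,b)^{2}=I_{2}$ at once. This second argument makes transparent why the entry $\frac{1-a^{2}}{b}$ was chosen: it is exactly the value forcing the determinant to equal $-1$, while the diagonal pair $\{a,-a\}$ already forces the trace to vanish, and a trace-zero, determinant-$(-1)$ matrix is precisely an involution of order $2$.

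Since either computation is a finite, unconditional identity over the field $\Bbb{C}$, there is no genuine obstacle here; the only point requiring attention is the standing hypothesis $b\in \Bbb{C}^{\times}$, which guarantees that $\frac{1-a^{2}}{b}$ is a legitimate scalar, so that $A_{2}(a,b)$ really lands in $M_{2}(\Bbb{C})$ and both forms of the argument are valid for every $(a,b)\in \Bbb{C}\times \Bbb{C}^{\times}$. I would present the direct entrywise computation as the proof, optionally appending the Cayley--Hamilton remark to explain the design of the map $A_{2}$ and to motivate the order-2 condition that drives the representation of $\Gamma _{N}^{2}$ in the next subsection.
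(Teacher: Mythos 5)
Your direct entrywise computation is exactly the ``straightforward computation'' the paper itself invokes for (2.1.2), and it is correct, including the observation that $b\in\Bbb{C}^{\times}$ is what makes $\frac{1-a^{2}}{b}$ well-defined. The Cayley--Hamilton remark (trace $0$, determinant $-1$) is a nice optional addition, but your main argument and the paper's proof coincide.
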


\begin{proof}
The proof of (2.1.2) is from the straightforward computation.
\end{proof}

\strut It is also easy to verify that$:$

\begin{center}
$A_{2}\left( a_{1},b_{1}\right) $ $A_{2}(a_{2},$ $b_{2})$ $=$ $\allowbreak
\left( 
\begin{array}{cc}
\frac{a_{1}a_{2}b_{2}+b_{1}-b_{1}a_{2}^{2}}{b_{2}} & a_{1}b_{2}-b_{1}a_{2}
\\ 
\frac{a_{2}b_{2}-a_{2}b_{2}a_{1}^{2}-a_{1}b_{1}+a_{1}b_{1}a_{2}^{2}}{%
b_{1}b_{2}} & \frac{b_{2}-b_{2}a_{1}^{2}+a_{1}a_{2}b_{1}}{b_{1}}
\end{array}
\right) ,$
\end{center}

and

\begin{center}
$A_{2}\left( a_{2},b_{2}\right) $ $A_{2}(a_{1},$ $b_{1})$ $=$ $\allowbreak
\left( 
\begin{array}{cc}
\frac{b_{2}-b_{2}a_{1}^{2}+a_{1}a_{2}b_{1}}{b_{1}} & b_{1}a_{2}-a_{1}b_{2}
\\ 
\frac{-a_{2}b_{2}+a_{2}b_{2}a_{1}^{2}+a_{1}b_{1}-a_{1}b_{1}a_{2}^{2}}{%
b_{1}b_{2}} & \frac{a_{1}a_{2}b_{2}+b_{1}-b_{1}a_{2}^{2}}{b_{2}}
\end{array}
\right) ,$
\end{center}

and hence,

(2.1.3)

\begin{center}
$A_{2}\left( a_{1},b_{1}\right) $ $A_{2}(a_{2},b_{2})$ $\neq $ $%
A_{2}(a_{2},b_{2})$ $A_{2}(a_{1},b_{1}),$
\end{center}

in general, for $(a_{k},$ $b_{k})$ $\in $ $\Bbb{C}\times \Bbb{C}^{\times },$
for $k$ $=$ $1,$ $2.$ In particular, whenever

\begin{center}
$(a_{k}$, $b_{k})$ $\in $ $\Bbb{C}^{\times }\times \Bbb{C}^{\times }$ in $%
\Bbb{C}\times \Bbb{C}^{\times },$
\end{center}

for $k$ $=$ $1,$ $2,$ and the pairs $(a_{1},$ $b_{1})$ and $(a_{2},$ $b_{2})$
are ``distinct'' in $\Bbb{C}^{\times }\times \Bbb{C}^{\times },$ the above
noncommutativity (2.1.3) always holds.

Therefore, the images

\begin{center}
$\{A_{2}(a,$ $b)$ $:$ $(a,$ $b)$ $\in $ $\Bbb{C}^{\times }\times \Bbb{C}%
^{\times }\}$
\end{center}

forms the noncommutative family in $M_{2}(\Bbb{C}),$ by (2.1.3).

Now, take distinct pairs,

(2.1.3)$^{\prime }$

\begin{center}
$(a_{k},$ $b_{k})$ $\in $ $\Bbb{C}^{\times }$ $\times $ $\Bbb{C}^{\times },$
for $k$ $=$ $1,$ ..., $N.$
\end{center}

For the above chosen $N$-pairs $\{(a_{k},$ $b_{k})\}_{k=1}^{N}$ of (2.1.3)$%
^{\prime },$ one can construct the corresponding matrices,

(2.1.4)

\begin{center}
$A_{2}(a_{k},$ $b_{k})$ in $M_{2}(\Bbb{C}),$ for $k$ $=$ $1,$ ..., $N,$
\end{center}

by (2.1.1). Under the matrix multiplication on $M_{2}(\Bbb{C}),$ let's
construct the multiplicative subgroup $\frak{M}_{2}^{2}(N)$ of $M_{2}(\Bbb{C}%
)$ as

(2.1.5)

\begin{center}
$\frak{M}_{2}^{2}(N)$ $=$ $\left\langle \{A_{2}(a_{k},\text{ }%
b_{k})\}_{k=1}^{N}\right\rangle $
\end{center}

generated by $A_{2}(a_{k},$ $b_{k}),$ for all $k$ $=$ $1,$ ..., $N.$ i.e.,
all elements of $\frak{M}_{2}^{2}(N)$ are the $(2\times 2)$-matrices in $%
M_{2}(\Bbb{C}),$ satisfying

(2.1.6)

\begin{center}
$\left( A_{2}(a_{k},b_{k})\right) ^{2}$ $=$ $I_{2}$ in $\frak{M}_{2}^{2}(N),$
\end{center}

for all $k$ $=$ $1,$ ..., $N,$ be (2.1.2).

\begin{definition}
\strut Let $\frak{M}_{2}^{2}(N)$ be the multiplicative subgroup (2.1.5) of $%
M_{2}(\Bbb{C}).$ We call $\frak{M}_{2}^{2}(N),$ the $2$-dimensional,
order-2, $N$-generator (sub-)group (of $M_{2}(\Bbb{C})$).
\end{definition}

\strut One obtains the following algebraic characterization.

\begin{theorem}
Let $\frak{M}_{2}^{2}(N)$ be a 2-dimensional, order-2, $N$-generator group
in $M_{2}(\Bbb{C}).$ Then the groups $\frak{M}_{2}^{2}(N)$ and the group $%
\Gamma _{N}^{2}$ of (1.0.1) are isomorphic. i.e.,

(2.1.7)

\begin{center}
$\frak{M}_{2}^{2}(N)$ $\overset{\text{Group}}{=}$ $\Gamma _{N}^{2}.$
\end{center}
\end{theorem}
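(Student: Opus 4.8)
The plan is to build an explicit group homomorphism $\Phi :\Gamma _{N}^{2}\rightarrow \frak{M}_{2}^{2}(N)$ using the presentation of $\Gamma _{N}^{2}$, show it is well-defined and surjective for free, and then spend essentially all effort on injectivity. First I would record that, by (1.0.2), $\Gamma _{N}^{2}$ is the quotient of the free group on $g_{1},\dots ,g_{N}$ by the relations $g_{j}^{2}=e$, i.e. the free product $\Bbb{Z}_{2}\ast \cdots \ast \Bbb{Z}_{2}$ of $N$ copies of the order-two group, with $x_{k}$ generating the $k$-th factor. In particular $\Gamma _{N}^{2}$ has a normal form: since $x_{j}^{2}=e_{N}$ collapses every exponent in (1.0.3), each nontrivial $g$ is \emph{uniquely} an alternating reduced word $g=x_{i_{1}}x_{i_{2}}\cdots x_{i_{m}}$ with $i_{s}\neq i_{s+1}$. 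By the universal property of this presentation, the assignment $x_{k}\mapsto A_{2}(a_{k},b_{k})$ extends to a unique homomorphism $\Phi $, the relations being respected precisely because $\left( A_{2}(a_{k},b_{k})\right) ^{2}=I_{2}$ by (2.1.2). Surjectivity is immediate from (2.1.5), as the image already contains every generator.

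Hence the entire content of (2.1.7) is the injectivity of $\Phi $, which by the normal form is equivalent to showing that no nontrivial reduced word is sent to the identity, namely
\[
A_{2}(a_{i_{1}},b_{i_{1}})\,A_{2}(a_{i_{2}},b_{i_{2}})\cdots A_{2}(a_{i_{m}},b_{i_{m}})\neq I_{2}
\]
whenever $m\geq 1$ and $i_{s}\neq i_{s+1}$. I would attack this with the Table-Tennis (ping-pong) Lemma for free products. The relevant geometric input is cheap: from (2.1.1) one has $\det A_{2}(a,b)=-1$ and $\mathrm{tr}\,A_{2}(a,b)=0$, so each generator is an involution with eigenvalues $+1,-1$, acting on $\Bbb{P}^{1}(\Bbb{C})$ as a Möbius involution with two distinct fixed points. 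The plan is to construct pairwise disjoint nonempty sets $X_{1},\dots ,X_{N}\subseteq \Bbb{P}^{1}(\Bbb{C})$ (small neighborhoods organized around the $2N$ fixed points) such that $A_{2}(a_{k},b_{k})(X_{j})\subseteq X_{k}$ for all $j\neq k$; the ping-pong lemma then forces the subgroup generated by the $A_{2}(a_{k},b_{k})$ to be the free product of the $N$ order-two factors, identifying $\Phi $ as an isomorphism.

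The main obstacle is the degenerate end of the ping-pong hypothesis, and this is exactly where a condition on the chosen pairs must be invoked. For $N\geq 3$ the disjoint-neighborhood construction around the fixed points succeeds as soon as those $2N$ points are in general position. The delicate case is $N=2$, where $\Gamma _{2}^{2}\cong \Bbb{Z}_{2}\ast \Bbb{Z}_{2}=D_{\infty }$ demands that the single product $A_{2}(a_{1},b_{1})A_{2}(a_{2},b_{2})$ have \emph{infinite} order. This product lies in $SL_{2}(\Bbb{C})$ (its determinant is $(-1)(-1)=1$), and it is elliptic of finite order exactly when its trace equals $2\cos (r\pi )$ for some rational $r$; in that event a spurious dihedral relation collapses the group and injectivity fails. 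I would therefore extract the trace of the explicit product matrix displayed just before (2.1.3) and verify that, for the chosen pairs $\{(a_{k},b_{k})\}$, this trace avoids the exceptional values, so that every two-letter subword generates an infinite dihedral group.

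Feeding the infinite-order conclusion back into the ping-pong estimates then yields injectivity for all word lengths, and combined with the surjectivity already noted this establishes the isomorphism $\frak{M}_{2}^{2}(N)\overset{\text{Group}}{=}\Gamma _{N}^{2}$ of (2.1.7). An alternative to the geometric ping-pong, should one prefer a self-contained computation, is an induction on the word length $m$: track a distinguished matrix entry (or the image of a fixed line in $\Bbb{C}^{2}$) under left-multiplication by successive generators, and show it stays nonzero; this is the same phenomenon as the ping-pong dynamics expressed through the entry formulas already computed in (2.1.1)–(2.1.3).
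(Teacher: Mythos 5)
Your reduction is the right one: by the universal property of the presentation, $\Phi$ exists and is surjective, and the whole content of (2.1.7) is injectivity, i.e., that no nontrivial alternating word in the $A_{2}(a_{k},b_{k})$ equals $I_{2}$. You also correctly isolate where this can break: for $N=2$ the product $X_{1}X_{2}=A_{2}(a_{1},b_{1})A_{2}(a_{2},b_{2})$ lies in $SL_{2}(\Bbb{C})$ and must have infinite order. But your final step --- ``verify that, for the chosen pairs, the trace avoids the exceptional values $2\cos (r\pi )$, $r\in \Bbb{Q}$'' --- is not a step that can be carried out, because the theorem's hypotheses do not imply it. Concretely, take $(a_{1},b_{1})=(2,1)$ and $(a_{2},b_{2})=(3,\,8/3)$. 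These are admissible pairs (distinct, all entries nonzero, $a_{k}\neq 1$, and even entrywise distinct in the paper's later ``strong'' sense), yet $\mathrm{tr}(X_{1}X_{2})=2a_{1}a_{2}+\frac{b_{1}(1-a_{2}^{2})}{b_{2}}+\frac{b_{2}(1-a_{1}^{2})}{b_{1}}=12-3-8=1$ and $\det (X_{1}X_{2})=1$, so the eigenvalues of $X_{1}X_{2}$ are $e^{\pm i\pi /3}$ and $(X_{1}X_{2})^{6}=I_{2}$. Hence $\frak{M}_{2}^{2}(2)$ is the dihedral group of order $12$, while $\Gamma _{2}^{2}\cong \Bbb{Z}_{2}\ast \Bbb{Z}_{2}$ is infinite; they are not isomorphic. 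So injectivity genuinely fails for some admissible parameters: what your analysis actually uncovers is that (2.1.7), as stated for arbitrary distinct pairs, is false, and no completion of your argument (ping-pong or otherwise) can rescue it without adding a genericity hypothesis such as $\varepsilon _{0}\neq 2\cos (r\pi )$ for all rational $r$, and the analogous condition on every pairwise product when $N\geq 3$. For the same reason, your remark that ``general position of the $2N$ fixed points'' suffices when $N\geq 3$ is too optimistic: planting the two matrices above inside any larger generating family produces the relation $(X_{1}X_{2})^{6}=I_{2}$, which fails in $\Gamma _{N}^{2}$; note also that the generators act on $\Bbb{P}^{1}(\Bbb{C})$ as elliptic involutions, with no attracting/repelling dynamics, so the neighborhood construction is not automatic even generically.

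For comparison, the paper's own proof never confronts this point: after noting that the generators have order $2$ and are pairwise noncommuting, it asserts ``by the very definition (2.1.1) and (2.1.3)'' that no product of \emph{distinct} generators equals a generator or $I_{2}$, and from this concludes $\frak{M}_{2}^{2}(N)\cong \mathcal{F}_{N}/\mathcal{R}_{N}$. That assertion is unproven, it addresses only words with no repeated letters (freeness requires excluding all nontrivial alternating words, such as $(X_{1}X_{2})^{6}$), and the example above shows the conclusion itself can fail. So your strategy is the methodologically correct one and is more honest about where the difficulty lies; the gap in your proposal is that you promise a verification which is impossible under the stated hypotheses, rather than flagging that the theorem needs a stronger (generic) hypothesis on the pairs.
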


\begin{proof}
Let $\frak{M}_{2}^{2}(N)$ be a 2-dimensional, order-2, $N$-generator group
in $M_{2}(\Bbb{C}).$ Then, by (2.1.6), each generator $A_{2}(a_{k},$ $b_{k})$
satisfies

\begin{center}
$\left( A_{2}(a_{k},\text{ }b_{k})\right) ^{2}$ $=$ $I_{2},$ for all $k$ $=$ 
$1,$ ..., $N.$
\end{center}

i.e., each generator $A_{2}(a_{k},$ $b_{k})$ has order-2 indeed in $\frak{M}%
_{2}^{2}(N).$

Since $(a_{k},$ $b_{k})$ are taken from $\Bbb{C}^{\times }\times \Bbb{C}%
^{\times },$ the generators $\{A_{2}(a_{k},$ $b_{k})\}_{k=1}^{N}$ forms a
noncommutative family in $M_{2}(\Bbb{C}),$ by (2.1.3).

Observe now that there does not exist $n$-tuple $(j_{1},$ ..., $j_{n})$ of
``distinct'' elements $j_{1},$ ..., $j_{n}$ in $\{1,$ ..., $N\},$ for all $n$
$\in $ $\Bbb{N},$ such that

\begin{center}
$\overset{n}{\underset{l=1}{\Pi }}$ $A_{2}\left( a_{k_{j_{l}}},\text{ }%
b_{k_{j_{l}}}\right) $ $=$ $A_{2}\left( a_{k_{j_{0}}},\text{ }%
b_{k_{j_{0}}}\right) ,$ or $I_{2},$
\end{center}

for some $j_{0}$ $\in $ $\{1,$ ..., $N\},$ by the very definition (2.1.1)
and (2.1.3).

These observations show that

$\frak{M}_{2}^{2}(N)$ $\overset{\text{Group}}{=}$ $\mathcal{F}_{N}$ $/$ $%
\mathcal{R}_{N},$

where $\mathcal{F}_{N}$ is the noncommutative free group $\left\langle
\{g_{j}\}_{j=1}^{N}\right\rangle $ with $N$-generators $g_{1},$ ..., $g_{N},$
and $\mathcal{R}_{N}$ means the relator set,

\begin{center}
$\mathcal{R}_{N}$ $=$ $\{g_{j}^{2}$ $=$ $e\}_{j=1}^{N},$
\end{center}

where $e$ is the group-identity of $\mathcal{F}_{N}.$

\strut

Recall that the group $\Gamma _{N}^{2}$ of (1.0.1) is group-isomorphic to
the quotient group $\mathcal{F}_{N}$ $/$ $\mathcal{R}_{N},$ by (1.0.2).
Therefore, one has that

\begin{center}
$\frak{M}_{2}^{2}(N)$ $\overset{\text{Group}}{=}$ $\mathcal{F}_{N}$ $/$ $%
\mathcal{R}_{N}$ $\overset{\text{Group}}{=}$ $\Gamma _{N}^{2}.$
\end{center}

Therefore, the 2-dimensional, order-2, $N$-generator group $\frak{M}%
_{2}^{2}(N)$ is group-isomorphic to the group $\Gamma _{N}^{2}$ of (1.0.1).
\end{proof}

\strut By (2.1.7), two groups $\frak{M}_{2}^{2}(N)$ and $\Gamma _{N}^{2}$
are isomorphic from each other, equivalently, there does exist a
group-isomorphism,

\begin{center}
$\alpha ^{2}$ $:$ $\Gamma _{N}^{2}$ $\rightarrow $ $\frak{M}_{2}^{2}(N),$
\end{center}

satisfying

(2.1.8)

\begin{center}
$\alpha ^{2}\left( x_{j}\right) $ $=$ $A_{2}\left( a_{j},\text{ }%
b_{j}\right) ,$ for all $j$ $=$ $1,$ ..., $N.$
\end{center}

The above structure theorem (2.1.7), equivalently, the existence of the
group-isomorphism $\alpha ^{2}$ of (2.1.8) provides a 2-dimensional
Hilbert-space representation $\left( \Bbb{C}^{2},\text{ }\alpha ^{2}\right) $
of $\Gamma _{N}^{2}.$

\begin{theorem}
There exists the $2$-dimensional Hilbert-space representation $\left( \Bbb{C}%
^{2},\text{ }\alpha ^{2}\right) $ of $\Gamma _{N}^{2}.$ Especially, $\alpha
^{2}\left( g\right) $ acting on $\Bbb{C}^{2}$ are in the sense of (2.1.8),
for all $g$ $\in $ $\Gamma _{N}^{2}$. i.e.,

(2.1.9) $\qquad \left( \Bbb{C}^{2},\text{ }\alpha ^{2}\right) $ forms a
2-dimensional representation of $\Gamma _{N}^{2},$

where $\alpha ^{2}$ is in the sense of (2.1.8).
\end{theorem}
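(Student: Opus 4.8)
The plan is to read the desired representation directly off the group-isomorphism already furnished by the preceding theorem, so the only real work is to repackage (2.1.7) and (2.1.8) in representation-theoretic language. Recall that a $2$-dimensional Hilbert-space representation of $\Gamma_N^2$ is nothing but a group-homomorphism from $\Gamma_N^2$ into $GL_2(\Bbb{C})$, the group of invertible $(2\times 2)$-matrices acting on $\Bbb{C}^2$. Thus it suffices to exhibit such a homomorphism, and the natural candidate is the map $\alpha^2$ of (2.1.8).

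First I would confirm that each image $A_2(a_j,b_j)$ actually lands in $GL_2(\Bbb{C})$, not merely in $M_2(\Bbb{C})$. This is immediate from Lemma (2.1.2): since $\left(A_2(a_j,b_j)\right)^2 = I_2$, every generator is its own inverse, hence invertible (equivalently, a one-line determinant computation gives $\det A_2(a,b) = -a^2 - (1-a^2) = -1 \neq 0$). Consequently the subgroup $\frak{M}_2^2(N)$ of (2.1.5) is in fact a subgroup of $GL_2(\Bbb{C})$, and the inclusion $\frak{M}_2^2(N) \hookrightarrow GL_2(\Bbb{C})$ is a group-monomorphism.

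Next I would invoke the structure theorem. By (2.1.7) there exists a group-isomorphism $\alpha^2 : \Gamma_N^2 \to \frak{M}_2^2(N)$ determined on generators by $\alpha^2(x_j) = A_2(a_j,b_j)$, as recorded in (2.1.8). Composing $\alpha^2$ with the inclusion from the previous step yields a group-homomorphism $\Gamma_N^2 \to GL_2(\Bbb{C})$, which is precisely the assignment $g \mapsto A_g$ described in the Introduction: for a word $g = x_{i_1}^{k_1}\cdots x_{i_l}^{k_l}$ one obtains $\alpha^2(g) = A_2(a_{i_1},b_{i_1})^{k_1}\cdots A_2(a_{i_l},b_{i_l})^{k_l}$ in $M_2(\Bbb{C})$, with the action on $\Bbb{C}^2$ given by matrix multiplication. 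This is exactly the asserted representation $\left(\Bbb{C}^2,\ \alpha^2\right)$.

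I do not expect a serious obstacle: the substantive content has already been absorbed into the isomorphism theorem (2.1.7), whose nontrivial input is the relation-checking that makes $\alpha^2$ well-defined on the quotient $\mathcal{F}_N/\mathcal{R}_N$. The one point worth an explicit remark is compatibility with the negative powers $x_j^{-k}$ occurring in the general form (1.0.3): because each $A_2(a_j,b_j)$ is an involution, $\alpha^2$ sends $x_j^{-1}$ to $A_2(a_j,b_j)^{-1} = A_2(a_j,b_j)$, so the homomorphism property extends automatically to all non-reduced words. Collecting these observations shows that $\left(\Bbb{C}^2,\ \alpha^2\right)$ is a $2$-dimensional Hilbert-space representation of $\Gamma_N^2$ with $\alpha^2$ in the sense of (2.1.8), which is the claim.
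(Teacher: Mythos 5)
Your proposal is correct and follows essentially the same route as the paper: both read the representation directly off the group-isomorphism $\alpha^2$ of (2.1.7)/(2.1.8), noting that its image $\frak{M}_2^2(N)$ consists of matrices acting on $\Bbb{C}^2$. Your added check that the generators land in $GL_2(\Bbb{C})$ (via the involution property, or $\det A_2(a,b)=-1$) is a small refinement the paper leaves implicit, but it does not change the argument.
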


\begin{proof}
Since $\alpha ^{2}$ of (2.1.8) is a generator-preserving group-isomorphism
from $\Gamma _{N}^{2}$ onto $\frak{M}_{2}^{2}(N),$ it satisfies

\begin{center}
$\alpha ^{2}\left( g_{1}g_{2}\right) $ $=$ $\alpha ^{2}\left( g_{1}\right) $ 
$\alpha ^{2}\left( g_{2}\right) $ in $\frak{M}_{2}^{2}(N),$
\end{center}

for all $g_{1},$ $g_{2}$ $\in $ $\Gamma _{N}^{2},$ and

\begin{center}
$\alpha ^{2}\left( g^{-1}\right) $ $=$ $\left( \alpha ^{2}\left( g\right)
\right) ^{-1}$ in $\frak{M}_{2}^{2}(N),$
\end{center}

for all $g$ $\in $ $\Gamma _{N}^{2}.$

Since $\frak{M}_{2}^{2}(N)$ $\subseteq $ $M_{2}(\Bbb{C}),$ the matrices $%
\alpha ^{2}\left( g\right) $ are acting on $\Bbb{C}^{2},$ for all $g$ $\in $ 
$\Gamma _{N}^{2}.$ Therefore, the pair $\left( \Bbb{C}^{2},\text{ }\alpha
^{2}\right) $ forms a Hilbert-space representation of $\Gamma _{N}^{2}.$
\end{proof}

\subsection{\strut Certain Order-2 Matrices in $M_{n}(\Bbb{C})$}

Now, let $n$ $>$ $2$ in $\Bbb{N}.$ And consider certain types of matrices in 
$M_{n}(\Bbb{C}).$ In Section 2.1, we showed that our group $\Gamma _{N}^{2}$
of (1.0.1) is group-isomorphic to the multiplicative subgroup $\frak{M}%
_{2}^{2}(N)$ of (2.1.5) in $M_{2}(\Bbb{C}),$ by (2.1.7), and hence, we
obtain a natural 2-dimensional representation $\left( \Bbb{C}^{2},\text{ }%
\alpha ^{2}\right) $ of $\Gamma _{N}^{2}$ in (2.1.9). In this section, we
construct a base-stone to extend the representation $\left( \Bbb{C}^{2},%
\text{ }\alpha ^{2}\right) $ to arbitrary $n$-dimensional representations $%
\left( \Bbb{C}^{n},\text{ }\alpha ^{n}\right) $ of $\Gamma _{N}^{2},$ for
all $n$ $\in $ $\Bbb{N}$ $\setminus $ $\{1\}.$

\strut To do that, we fix the matrices formed by

\begin{center}
$A_{2}(a,$ $b)$ $=$ $\left( 
\begin{array}{cc}
a & b \\ 
\frac{1-a^{2}}{b} & -a
\end{array}
\right) ,$
\end{center}

as blocks of certain matrices in $M_{n}(\Bbb{C}).$

\strut

\textbf{Assumption} For our main purpose, we always assume that:

\begin{center}
$(a,$ $b)$ $\in $ $\Bbb{C}_{\symbol{126}1}^{\times }$ $\times $ $\Bbb{C}%
^{\times },$
\end{center}

whenever we have matrices $A_{2}(a,$ $b)$ in the following text, as in
(2.1.3)$^{\prime }$, where

\begin{center}
$\strut \Bbb{C}_{\symbol{126}1}^{\times }$ $\overset{def}{=}$ $\Bbb{C}$ $%
\setminus $ $\{0,$ $1\}.$
\end{center}

$\square $

\strut

Assume first that $n$ $=$ $3,$ and consider a matrix $A_{3}$ formed by

(2.2.1)

\begin{center}
$A_{3}$ $=$ $\left( 
\begin{array}{ccc}
1 & c & \frac{-bc}{1-a} \\ 
0 & a & b \\ 
0 & \frac{1-a^{2}}{b} & -a
\end{array}
\right) $ $\in $ $M_{3}(\Bbb{C}),$
\end{center}

for some $a,$ $b,$ $c$ $\in $ $\Bbb{C}^{\times }.$ As we have seen in
(2.2.1), this matrix $A_{3}$ is regarded as the following block matrix,

(2.2.1)$^{\prime }$

\begin{center}
$A_{3}$ $=$ $\left( 
\begin{array}{cc}
(1) & \left( 
\begin{array}{ccc}
c &  & \frac{-bc}{1-a}
\end{array}
\right) \\ 
\left( 
\begin{array}{c}
0 \\ 
0
\end{array}
\right) & A_{2}(a,\text{ }b)
\end{array}
\right) $ in $M_{3}(\Bbb{C}),$
\end{center}

where $A_{2}$\strut $(a,$ $b)$ is in the sense of (2.1.1).

If we have $(3\times 3)$-matrix $A$ in the sense of (2.2.1) understood as
(2.2.1)$^{\prime },$ then, from straightforward computation, one can obtain
that

(2.2.2)

\begin{center}
$A_{3}^{2}$ $=$ $I_{3}$ $=$ $\left( 
\begin{array}{ccc}
1 & 0 & 0 \\ 
0 & 1 & 0 \\ 
0 & 0 & 1
\end{array}
\right) ,$
\end{center}

the identity matrix of $M_{3}(\Bbb{C}).$

\strut Let's define now a morphism

\begin{center}
$A_{3}$ $:$ $\Bbb{C}_{\symbol{126}1}^{\times }\times \Bbb{C}^{\times }\times 
\Bbb{C}^{\times }$ $\rightarrow $ $M_{3}(\Bbb{C})$ by
\end{center}

(2.2.3)

\begin{center}
$A_{3}(a,$ $b,$ $c)$ $=$ $\left( 
\begin{array}{cc}
\left( 1\right) & \left( 
\begin{array}{ccc}
c &  & \frac{-bc}{1-a}
\end{array}
\right) \\ 
\left( 
\begin{array}{c}
0 \\ 
0
\end{array}
\right) & A_{2}\left( a,\text{ }b\right)
\end{array}
\right) ,$
\end{center}

identified with the matrix (2.2.1) in $M_{3}(\Bbb{C}),$ by (2.2.1)$^{\prime
}.$

By (2.2.2), for any $(a,$ $b,$ $c)$ $\in $ $\Bbb{C}_{\symbol{126}1}^{\times
}\times \Bbb{C}^{\times }\times \Bbb{C}^{\times },$ we have

\begin{center}
$\left( A_{3}(a,b,c)\right) ^{2}$ $=$ $I_{3}$ in $M_{3}(\Bbb{C}).$
\end{center}

\strut \strut

Now, let $n$ $=$ $4$ in $\Bbb{N},$ and consider a matrix $A_{4}$ formed by

(2.2.3)

\begin{center}
$A_{4}$ $=$ $\left( 
\begin{array}{cccc}
1 & 0 & d & \frac{-bd}{1-a} \\ 
0 & 1 & c & \frac{-bc}{1-a} \\ 
0 & 0 & a & b \\ 
0 & 0 & \frac{1-a^{2}}{b} & -a
\end{array}
\right) $ $\in $ $M_{4}(\Bbb{C}),$
\end{center}

for $c,$ $d$ $\in $ $\Bbb{C}^{\times }.$ Then, similarly, this matrix $A_{4}$
of (2.2.3) is regarded as the block matrix,

(2.2.4)

\begin{center}
$A_{4}$ $=$ $\left( 
\begin{array}{cc}
(1) & \left( 
\begin{array}{ccc}
0 & d & \frac{-bd}{1-a}
\end{array}
\right) \\ 
\left( 
\begin{array}{c}
0 \\ 
0 \\ 
0
\end{array}
\right) & A_{3}(a,b,c)
\end{array}
\right) ,$
\end{center}

or

\begin{center}
$A_{4}$ $=$ $\left( 
\begin{array}{ccc}
I_{2} &  & \left( 
\begin{array}{ccc}
d &  & \frac{-bd}{1-a} \\ 
&  &  \\ 
c &  & \frac{-bc}{1-a}
\end{array}
\right) \\ 
&  &  \\ 
O_{2,2} &  & A_{2}(a,\text{ }b)
\end{array}
\right) .$
\end{center}

in $M_{4}(\Bbb{C}),$ where $O_{2,2}$ is the $(2\times 2)$-\emph{zero matrix}
whose all entries are zeroes.

\strut Then, by the direct computation, one obtains that

(2.2.5)

\begin{center}
$A_{4}^{2}$ $=$ $I_{4}$, the identity matrix of $M_{4}(\Bbb{C}).$
\end{center}

So, similar to (2.2.3), we define a morphism

\begin{center}
$A_{4}$ $:$ $\Bbb{C}_{\symbol{126}1}^{\times }\times \left( \Bbb{C}^{\times
}\right) ^{3}$ $\rightarrow $ $M_{4}(\Bbb{C})$ by
\end{center}

(2.2.6)

\begin{center}
$A_{4}(a,$ $b,$ $c,$ $d)$ $=$ $\left( 
\begin{array}{ccc}
I_{2} &  & Q_{a,b}\left( c,d\right) \\ 
&  &  \\ 
O_{2,2} &  & A_{2}(a,\text{ }b)
\end{array}
\right) ,$
\end{center}

where $I_{2}$ is the $(2\times 2)$-identity matrix, $O_{2,2}$ is the $%
(2\times 2)$-zero matrix, and

\begin{center}
$Q_{a,b}(c,d)$ $=$ $\left( 
\begin{array}{ccc}
d &  & \frac{-bd}{1-a} \\ 
&  &  \\ 
c &  & \frac{-bc}{1-a}
\end{array}
\right) $.
\end{center}

The image $A_{4}(a,b,c,d)$ of (2.2.6) becomes a well-defined matrix in $%
M_{4}(\Bbb{C}),$ by (2.2.3) and (2.2.4). Moreover, by (2.2.5), we have

(2.2.7)

\begin{center}
$\left( A_{4}(a,b,c,d)\right) ^{2}$ $=$ $I_{4}$ in $M_{4}(\Bbb{C}).$
\end{center}

\strut

Let's consider one more step: let $n$ $=$ $5.$ Similarly, we define the
following map,

\begin{center}
$A_{5}$ $:$ $\Bbb{C}_{\symbol{126}1}^{\times }\times \left( \Bbb{C}^{\times
}\right) ^{4}\rightarrow M_{5}(\Bbb{C})$ by
\end{center}

(2.2.8)

\begin{center}
$A_{5}\left( a_{1},\text{ ..., }a_{5}\right) $ $=$ $\left( 
\begin{array}{ccc}
I_{3} &  & Q_{a_{1},a_{2}}(a_{3},a_{4},a_{5}) \\ 
&  &  \\ 
O_{2,3} &  & A_{2}(a_{1},\text{ }a_{2})
\end{array}
\right) ,$
\end{center}

\strut

in $M_{5}(\Bbb{C}),$ for all $(a_{1},$ ..., $a_{5})$ $\in $ $\left( \Bbb{C}%
^{\times }\right) ^{5},$ where $A_{2}(a_{1},$ $a_{2})$ is in the sense of
(2.1.1), $O_{2,3}$ is the $(2\times 3)$-zero matrix, and $I_{3}$ is the $%
(3\times 3)$-identity matrix, and

\begin{center}
$Q_{a_{1},a_{2}}(a_{3},a_{4},a_{5})$ $=$ $\left( 
\begin{array}{cc}
a_{5} & \frac{-a_{2}a_{5}}{1-a_{1}} \\ 
a_{4} & \frac{-a_{2}a_{4}}{1-a_{1}} \\ 
a_{3} & \frac{-a_{2}a_{3}}{1-a_{1}}
\end{array}
\right) .$
\end{center}

i.e.,

\begin{center}
$A_{5}(a_{1},$ ..., $a_{5})$ $=$ $\left( 
\begin{array}{ccccc}
1 & 0 & 0 & a_{5} & \frac{-a_{2}a_{5}}{1-a_{1}} \\ 
0 & 1 & 0 & a_{4} & \frac{-a_{2}a_{4}}{1-a_{1}} \\ 
0 & 0 & 1 & a_{3} & \frac{-a_{2}a_{3}}{1-a_{1}} \\ 
0 & 0 & 0 & a_{1} & a_{2} \\ 
0 & 0 & 0 & \frac{1-a_{1}^{2}}{a_{2}} & -a_{1}
\end{array}
\right) $ $\in $ $M_{5}(\Bbb{C}).$
\end{center}

From direct computation, one again obtain that

(2.2.9)

\begin{center}
$\left( A_{5}(a_{1},...,a_{5})\right) ^{2}$ $=$ $I_{5},$ the identity matrix
of $M_{5}(\Bbb{C}).$
\end{center}

\strut

Inductively, for $n$ $\geq $ $3,$ we define the following map

\begin{center}
$A_{n}$ $:$ $\Bbb{C}_{\symbol{126}1}^{\times }\times \left( \Bbb{C}^{\times
}\right) ^{n-1}$ $\rightarrow $ $M_{n}(\Bbb{C}),$ by
\end{center}

(2.2.10)

\begin{center}
$A_{n}\left( a_{1},\text{ ..., }a_{n}\right) $ $=$ $\left( 
\begin{array}{ccc}
I_{n-2} &  & Q_{a_{1},a_{2}}\left( a_{3},...,a_{n}\right) \\ 
&  &  \\ 
O_{2,n-2} &  & A_{2}(a_{1},\text{ }a_{2})
\end{array}
\right) ,$
\end{center}

\strut

for all $(a_{1},$ ..., $a_{n})$ $\in $ $\left( \Bbb{C}^{\times }\right)
^{n}, $ where

\begin{center}
$A_{2}(a_{1},$ $a_{2})$ is in the sense of (2.1.1),

$O_{2,n-2}$ is the $(2\times (n-2))$-zero matrix,

$I_{n-2}$ is the $(n-2)\times (n-2)$ identity matrix,
\end{center}

and

\begin{center}
$Q_{a_{1},a_{2}}(a_{3},$ ..., $a_{n})$ $=$ $\left( 
\begin{array}{ccc}
\begin{array}{c}
a_{n}
\end{array}
&  & 
\begin{array}{c}
\frac{-a_{2}a_{n}}{1-a_{1}}
\end{array}
\\ 
\vdots &  & \vdots \\ 
\begin{array}{c}
a_{4}
\end{array}
&  & 
\begin{array}{c}
\frac{-a_{2}a_{3}}{1-a_{1}}
\end{array}
\\ 
a_{3} &  & \frac{-a_{2}a_{3}}{1-a_{1}}
\end{array}
\right) .$
\end{center}

Then we obtain the following computation.

\begin{theorem}
Let $A_{n}$ be the morphism (2.2.10), and let $A_{n}(a_{1},$ ..., $a_{n})$
be the image of $A_{n}$ realized in the matricial algebra $M_{n}(\Bbb{C}),$
for arbitrarily fixed $n$ $\geq $ $3$ in $\Bbb{N}.$ Then

(2.2.11)

\begin{center}
$\left( A_{n}(a_{1},...,a_{n})\right) ^{2}$ $=$ $I_{n},$
\end{center}

the identity matrix of $M_{n}(\Bbb{C}).$
\end{theorem}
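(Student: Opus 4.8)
The plan is to exploit the $2\times 2$ block-upper-triangular shape of $A_n(a_1,\ldots,a_n)$ recorded in (2.2.10) and to reduce the whole computation to the identity $\left( A_2(a_1,a_2)\right)^2 = I_2$ already proved in (2.1.2). Abbreviating $Q = Q_{a_1,a_2}(a_3,\ldots,a_n)$ and $A_2 = A_2(a_1,a_2)$, I write $A_n$ as the block matrix with upper-left block $I_{n-2}$, upper-right block $Q$, lower-left block $O_{2,n-2}$, and lower-right block $A_2$. Squaring block-wise, the vanishing of the lower-left zero block $O_{2,n-2}$ collapses three of the four resulting blocks at once: the upper-left block is $I_{n-2}I_{n-2} = I_{n-2}$, the lower-left block stays $O_{2,n-2}$, and the lower-right block is $A_2^2 = I_2$ by (2.1.2). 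Hence $A_n^2$ already agrees with $I_n$ everywhere except possibly in its upper-right $(n-2)\times 2$ block.

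All the content thus lives in the upper-right block, which equals $I_{n-2}Q + QA_2 = Q\left( I_2 + A_2\right)$, and the goal becomes showing $Q\left( I_2 + A_2\right) = O_{n-2,2}$. First I would record $I_2 + A_2 = \left(\begin{array}{cc} 1+a_1 & a_2 \\ \frac{1-a_1^2}{a_2} & 1-a_1\end{array}\right)$, and then check the product row by row. Each row of $Q$ has the form $\left( a_k,\ \dfrac{-a_2a_k}{1-a_1}\right)$ for some $k\in\{3,\ldots,n\}$, so multiplying one such row into the two columns of $I_2 + A_2$ produces exactly two scalars to verify. For the first column the value is $a_k(1+a_1) - \dfrac{a_2a_k}{1-a_1}\cdot\dfrac{1-a_1^2}{a_2}$, and the decisive step is the factorization $1-a_1^2 = (1-a_1)(1+a_1)$, which makes the two terms cancel; for the second column the value is $a_ka_2 - \dfrac{a_2a_k}{1-a_1}(1-a_1) = 0$ immediately. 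Since every row is annihilated, $Q\left( I_2 + A_2\right) = O_{n-2,2}$, so all four blocks of $A_n^2$ coincide with those of $I_n$, giving (2.2.11).

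I do not anticipate a genuine obstacle here. No induction on $n$ is needed, because (2.2.10) already presents $A_n$ directly in terms of the single $2\times 2$ building block $A_2(a_1,a_2)$, and the block arithmetic above is uniform in $n$; the cases $n = 3,4,5$ treated in (2.2.2), (2.2.5), (2.2.9) are just instances. The only place where the standing \textbf{Assumption} is used is the requirement $a_1\neq 1$, which is what makes the entries $\dfrac{-a_2a_k}{1-a_1}$ of $Q$ well-defined; tellingly, the denominator $1-a_1$ is precisely the factor that cancels against $1-a_1^2$ in the key computation, so the definition of $Q$ is engineered exactly to force the off-diagonal block to vanish. What remains is only the routine scalar cancellation sketched above.
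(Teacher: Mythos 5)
Your proof is correct and follows essentially the same route as the paper: write $A_n$ in $2\times 2$ block form, observe that squaring reduces everything to the upper-right block $Q + QA_2$, and kill that block by a row-wise cancellation resting on $1-a_1^2 = (1-a_1)(1+a_1)$. Your factorization $Q + QA_2 = Q\left(I_2 + A_2\right)$ and your remark that the block argument is uniform in $n$ (so the paper's separate citations of the cases $n=3,4,5$ and its reduction to $n \geq 6$ are unnecessary) are minor streamlinings of the same computation, not a different method.
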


\begin{proof}
Let $n$ $\geq 3$ be given in $\Bbb{N}.$ Then, by (2.2.2), (2.2.7) and
(2.2.9), we have

\begin{center}
$\left( A_{k}(a_{1},\text{ ..., }a_{k})\right) ^{2}$ $=$ $I_{k}$ in $M_{k}(%
\Bbb{C}),$
\end{center}

for $k$ $=$ $3,$ $4,$ $5.$

Now, without loss of generality take $n$ $\geq $ $6$ in $\Bbb{N},$
generally. Then, by (2.2.10),

\begin{center}
$A_{n}(a_{1},$ ..., $a_{n})$ $=$ $\left( 
\begin{array}{ccc}
I_{n-2} &  & Q_{a_{1},a_{2}}(a_{3},\text{ ..., }a_{n}) \\ 
&  &  \\ 
O_{2,n-2} &  & A_{2}(a_{1},a_{2})
\end{array}
\right) .$
\end{center}

For convenience, we let

\begin{center}
$I$ $\overset{denote}{=}$ $I_{n-2},\qquad $ $Q$ $\overset{denote}{=}$ $%
Q_{a_{1},a_{2}}(a_{3},$ ..., $a_{n}),$
\end{center}

and

\begin{center}
$O$ $\overset{denote}{=}$ $O_{2,n-2},\qquad $ $A$ $\overset{denote}{=}$ $%
A_{2}(a_{1},a_{2}).$
\end{center}

i.e.,

\begin{center}
$A_{n}(a_{1},$ ..., $a_{n})$ $\overset{denote}{=}$ $\left( 
\begin{array}{cc}
I & Q \\ 
O & A
\end{array}
\right) ,$
\end{center}

as a block matrix in $M_{n}(\Bbb{C}).$ Then

\strut

$\qquad \left( A_{n}(a_{1},\text{ ..., }a_{n})\right) ^{2}$ $=$ $\left( 
\begin{array}{cc}
I & Q \\ 
O & A
\end{array}
\right) \left( 
\begin{array}{cc}
I & Q \\ 
O & A
\end{array}
\right) $

\strut

$\qquad \qquad \qquad =$ $\left( 
\begin{array}{ccc}
I^{2}+QO &  & IQ+QA \\ 
&  &  \\ 
OI+AO &  & OQ+A^{2}
\end{array}
\right) $

\strut (2.2.12)

$\qquad \qquad \qquad =$ $\left( 
\begin{array}{ccc}
I &  & IQ+QA \\ 
&  &  \\ 
O &  & A^{2}
\end{array}
\right) $ $=$ $\left( 
\begin{array}{ccc}
I &  & Q+QA \\ 
&  &  \\ 
O &  & I_{2}
\end{array}
\right) ,$

\strut by (2.1.2).

So, to show (2.2.11), it is sufficient to show that

\begin{center}
$Q+QA$ $=$ $O_{n-2,2},$
\end{center}

by (2.2.12).

Notice that

$\qquad QA$ $=$ $\left( 
\begin{array}{cc}
a_{n} & \frac{-a_{2}a_{n}}{1-a_{1}} \\ 
\vdots & \vdots \\ 
a_{4} & \frac{-a_{2}a_{4}}{1-a_{1}} \\ 
a_{3} & \frac{-a_{2}a_{3}}{1-a_{1}}
\end{array}
\right) $ $\left( 
\begin{array}{cc}
a_{1} & a_{2} \\ 
\frac{1-a_{1}^{2}}{a_{2}} & -a_{1}
\end{array}
\right) $

\strut

$\qquad \qquad =$ $\left( 
\begin{array}{ccc}
a_{1}a_{n}+\frac{-a_{2}a_{n}(1-a_{1}^{2})}{a_{2}(1-a_{1})} &  & a_{2}a_{n}+%
\frac{a_{1}a_{2}a_{n}}{1-a_{1}} \\ 
\vdots &  & \vdots \\ 
a_{1}a_{3}+\frac{-a_{2}a_{3}(1-a_{1}^{2})}{a_{2}(1-a_{1})} &  & a_{2}a_{3}+%
\frac{a_{1}a_{2}a_{3}}{1-a_{1}}
\end{array}
\right) $

\strut

$\qquad \qquad =$ $\left( 
\begin{array}{ccc}
-a_{n} &  & a_{2}a_{n}\left( 1+\frac{a_{1}}{1-a_{1}}\right) \\ 
&  &  \\ 
\vdots &  &  \\ 
&  &  \\ 
-a_{3} &  & a_{2}a_{3}\left( 1+\frac{a_{1}}{1-a_{1}}\right)
\end{array}
\right) .$

\strut Thus,

$\qquad Q+QA$ $=$ $\left( 
\begin{array}{ccc}
a_{n} &  & \frac{-a_{2}a_{n}}{1-a_{1}} \\ 
\vdots &  & \vdots \\ 
a_{3} &  & \frac{-a_{2}a_{3}}{1-a_{1}}
\end{array}
\right) +\left( 
\begin{array}{ccc}
-a_{n} &  & a_{2}a_{n}\left( 1+\frac{a_{1}}{1-a_{1}}\right) \\ 
\vdots &  &  \\ 
-a_{3} &  & a_{2}a_{3}\left( 1+\frac{a_{1}}{1-a_{1}}\right)
\end{array}
\right) $

\strut

$\qquad \qquad =$ $\left( 
\begin{array}{ccc}
0 &  & a_{2}a_{n}\left( \frac{-1}{1-a_{1}}+1+\frac{a_{1}}{1-a_{1}}\right) \\ 
\vdots &  & \vdots \\ 
0 &  & a_{2}a_{3}\left( \frac{-1}{1-a_{1}}+1+\frac{a_{1}}{1-a_{1}}\right)
\end{array}
\right) $ $=$ $\left( 
\begin{array}{cc}
0 & 0 \\ 
\vdots & \vdots \\ 
0 & 0
\end{array}
\right) $

\strut

$\qquad \qquad =$ $O_{n-2,2},$

i.e.,

\begin{center}
$Q+QA$ $=$ $O_{n-2,2}.$
\end{center}

So, for any $n$ $\geq $ $6,$

(2.2.13)

\begin{center}
$\left( A_{n}(a_{1},\text{ ..., }a_{n})\right) ^{2}$ $=$ $\left( 
\begin{array}{ccc}
I &  & Q+QA \\ 
&  &  \\ 
O &  & I
\end{array}
\right) $ $=$ $\left( 
\begin{array}{cc}
I & O_{n-2,2} \\ 
O & I
\end{array}
\right) $ $=$ $I_{n},$
\end{center}

the identity matrix of $M_{n}(\Bbb{C}).$

\strut

Therefore, for any $n$ $\geq $ $3$ in $\Bbb{N},$

\begin{center}
\strut $\left( A_{n}(a_{1},\text{ ..., }a_{n})\right) ^{2}$ $=$ $I_{n}$ in $%
M_{n}(\Bbb{C}).$
\end{center}

\strut
\end{proof}

\subsection{$n$-Dimensional Hilbert-Space Representations of $\Gamma
_{N}^{2} $}

Let $N$ $\in $ $\Bbb{N}$ be the fixed quantity, and fix $n$ $\in $ $\Bbb{N}$ 
$\setminus $ $\{1\}.$ Also, let $\Gamma _{N}^{2}$ be our finitely presented
group (1.0.1). For the fixed $n$, we take the $n$-tuples

(2.3.1)

\begin{center}
$W_{k}$ $=$ $(a_{k,1},$ ..., $a_{k,n})$ $\in $ $\Bbb{C}_{\symbol{126}%
1}^{\times }\times \left( \Bbb{C}^{\times }\right) ^{n-1},$
\end{center}

for $k$ $=$ $1,$ ..., $N,$ and assume that $W_{1},$ ..., $W_{N}$ are
``mutually distinct.''\strut

\strut

\textbf{Observation and Notation} As we assumed above, let $W_{1},$ ..., $%
W_{N}$ be mutually distinct in $\Bbb{C}_{\symbol{126}1}^{\times }\times
\left( \Bbb{C}^{\times }\right) ^{n-1}.$ For our purposes, one may further
assume that these mutually distinct $n$-tuples satisfy

\begin{center}
$W_{i}\ncong $ $W_{j}$
\end{center}

in the sense that:

\begin{center}
$a_{i,l}$ $\neq $ $a_{j,l}$ in $\Bbb{C}^{\times },$ for all $l$ $=$ $1,$
..., $N.$
\end{center}

If the $n$-tuples $W_{1},$ ..., $W_{N}$ satisfy the above stronger condition
than the mutually-distinctness, we say they are \emph{strongly mutually
distinct}. $\square $

\strut \strut

Define the corresponding matrices,

(2.3.2)

\begin{center}
$X_{k}$ $\overset{denote}{=}$ $A_{n}\left( W_{k}\right) $ $=$ $\left( 
\begin{array}{ccc}
I_{n-2} &  & Q_{a_{k,1},a_{k,2}}\left( a_{k,3},\text{ ..., }a_{k,n}\right)
\\ 
&  &  \\ 
O_{2,\,n-2} &  & A_{2}(a_{k,1},\text{ }a_{k,2})
\end{array}
\right) ,$
\end{center}

\strut

as in (2.2.10), where $W_{k}$ are ``strongly mutually distinct'' $n$-tuples
of (2.3.1), for all $k$ $=$ $1,$ ..., $N.$

Then, as in Section 2.1, one can get the multiplicative subgroup $\frak{M}%
_{n}^{2}(N)$ of $M_{n}(\Bbb{C})$ by the (reduced) free group generated by $%
\{X_{1},$ ..., $X_{N}\}.$ i.e.,

$\strut $(2.3.3)

\begin{center}
$\frak{M}_{n}^{2}(N)$ $=$ $\left\langle \{X_{j}\}_{j=1}^{N}\right\rangle $
in $M_{n}(\Bbb{C}),$
\end{center}

where $X_{j}$'s are in the sense of (2.3.2), for all $j$ $=$ $1,$ ..., $N.$

\begin{definition}
We call the multiplicative subgroup $\frak{M}_{n}^{2}(N)$ of $M_{n}(\Bbb{C}%
), $ an $n$-dimensional, order-2, $N$-generator (sub-)group (of $M_{n}(\Bbb{C%
})$).
\end{definition}

\strut Then, similar to the proof of the structure theorem (2.1.7), one can
obtain the following generalized result.

\begin{theorem}
Let $\frak{M}_{n}^{2}(N)$ be an $n$-dimensional, order-2, $N$-generator
group in $M_{n}(\Bbb{C}).$ Then it is group-isomorphic to the group $\Gamma
_{N}^{2}$ of (1.0.1). i.e.,

(2.3.4)

\begin{center}
$\frak{M}_{n}^{2}(N)$ $\overset{\text{Group}}{=}$ $\Gamma _{N}^{2},$ for all 
$n$ $\in $ $\Bbb{N}$ $\setminus $ $\{1\}.$
\end{center}

In particular, there exist generator-preserving group-isomorphisms $\alpha
^{n}:\Gamma _{N}^{2}\rightarrow \frak{M}_{n}^{2}(N),$ such that

(2.3.5)

\begin{center}
$\alpha ^{n}\left( x_{j}\right) $ $=$ $X_{j},$ for all $j$ $=$ $1,$ ..., $N,$
\end{center}

where $x_{j}$'s are the generators of $\Gamma _{N}^{2},$ and $X_{j}$'s are
in the sense of (2.3.2).
\end{theorem}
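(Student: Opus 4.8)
The plan is to reduce the $n$-dimensional assertion to the already-proven two-dimensional structure theorem (2.1.7), exploiting the block-upper-triangular shape of the generators $X_{j}=A_{n}(W_{j})$ recorded in (2.3.2).

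First I would check that $\alpha^{n}$ is a well-defined group homomorphism. By Theorem (2.2.11) each generator satisfies $X_{j}^{2}=I_{n}$, so the assignment $x_{j}\mapsto X_{j}$ respects the defining relations $x_{j}^{2}=e_{N}$ of the presentation (1.0.1). By the universal property of a group presentation, this assignment extends uniquely to a homomorphism $\alpha^{n}:\Gamma_{N}^{2}\rightarrow \frak{M}_{n}^{2}(N)$ with $\alpha^{n}(x_{j})=X_{j}$, which establishes (2.3.5); moreover $\alpha^{n}$ is surjective because $\frak{M}_{n}^{2}(N)$ is generated by the $X_{j}$ by (2.3.3). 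The whole content of (2.3.4) therefore lies in proving that $\alpha^{n}$ is injective.

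To obtain injectivity cheaply, I would introduce the \emph{corner projection} $\pi$ sending a matrix to its lower-right $(2\times 2)$ block. Every generator has the block form $\left( \begin{array}{cc} I_{n-2} & Q_{j} \\ O_{2,n-2} & A_{2}(a_{j,1},a_{j,2}) \end{array} \right)$, and a routine block multiplication of the same kind as in (2.2.12) shows that a product of two such matrices is again of this shape and that its lower-right block is the product of the two lower-right blocks. Hence every element of $\frak{M}_{n}^{2}(N)$ has this block form, and $\pi:\frak{M}_{n}^{2}(N)\rightarrow \frak{M}_{2}^{2}(N)$ is a well-defined surjective group homomorphism with $\pi(X_{j})=A_{2}(a_{j,1},a_{j,2})$.

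Finally I would examine the composite $\pi\circ\alpha^{n}:\Gamma_{N}^{2}\rightarrow \frak{M}_{2}^{2}(N)$, which sends each $x_{j}$ to $A_{2}(a_{j,1},a_{j,2})$. Because the tuples $W_{1},\ldots,W_{N}$ are strongly mutually distinct, the pairs $(a_{j,1},a_{j,2})$ are distinct points of $\Bbb{C}^{\times}\times \Bbb{C}^{\times}$, so Theorem (2.1.7) applies and identifies this composite with the two-dimensional isomorphism $\alpha^{2}$. In particular $\pi\circ\alpha^{n}$ is injective, which forces $\alpha^{n}$ to be injective; together with the surjectivity above this yields $\frak{M}_{n}^{2}(N)\overset{\text{Group}}{=}\Gamma_{N}^{2}$. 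The only step needing genuine care is the block bookkeeping behind $\pi$ --- verifying that the upper-triangular shape with $I_{n-2}$ in the top-left corner is preserved under multiplication, so that $\pi$ really is multiplicative; everything else is immediate from (2.1.7) and (2.2.11). A more self-contained alternative would instead mirror the proof of (2.1.7) directly, re-deriving noncommutativity of the $X_{j}$ from (2.1.3) applied to the corner blocks and arguing that no reduced word in the $X_{j}$ collapses, but the corner-projection shortcut makes that unnecessary.
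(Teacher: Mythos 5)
Your proof is correct, but it takes a genuinely different route from the paper's. The paper handles $n\geq 3$ by simply asserting that ``the proof of (2.3.4) is similar to that of (2.1.7)'': that is, one is meant to re-run, in dimension $n$, the direct argument that the generators $X_{j}$ have order two, form a noncommutative family, and that no nontrivial reduced word in them collapses to a generator or to $I_{n}$, whence $\frak{M}_{n}^{2}(N)$ $\overset{\text{Group}}{=}$ $\mathcal{F}_{N}$ $/$ $\mathcal{R}_{N}$ $\overset{\text{Group}}{=}$ $\Gamma _{N}^{2}$. You instead reduce the $n$-dimensional statement to the already-proven two-dimensional one: the universal property of the presentation together with (2.2.11) gives the surjective homomorphism $\alpha ^{n}$ with $\alpha ^{n}(x_{j})$ $=$ $X_{j}$, and the corner projection $\pi$ --- well defined because matrices of the block form $\left( \begin{array}{cc} I_{n-2} & Q \\ O_{2,n-2} & A \end{array} \right)$ are closed under multiplication, with lower-right blocks multiplying among themselves --- lets you factor the two-dimensional isomorphism as $\alpha ^{2}$ $=$ $\pi \circ \alpha ^{n}$, so injectivity of $\alpha ^{n}$ is inherited from (2.1.7). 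What each approach buys: your reduction localizes the genuinely delicate content of the theorem (that no nontrivial word in the generators collapses) in the one place the paper actually argues it, namely (2.1.7), rather than re-asserting it in every dimension; in this sense your writeup is more complete than the paper's proof, which is only a sketch by analogy. The paper's route, if carried out in full, would be self-contained dimension by dimension but would repeat the same unproved collapsing claim. You also correctly flag the one hypothesis your shortcut needs: strong mutual distinctness of $W_{1},$ ..., $W_{N}$ guarantees that the corner pairs $(a_{j,1},$ $a_{j,2})$ are distinct points of $\Bbb{C}^{\times }\times \Bbb{C}^{\times }$, so the corner group is a legitimate instance of the $\frak{M}_{2}^{2}(N)$ covered by (2.1.7).
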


\begin{proof}
Let $n$ $=$ $2.$ Then, by (2.1.7), the isomorphic relation (2.3.4) holds.
Assume now that $n$ $\geq 3.$ Then the proof of (2.3.4) is similar to that
of (2.1.7). Indeed, one can show that

\begin{center}
$\frak{M}_{n}^{2}(N)$ $\overset{\text{Group}}{=}$ $\mathcal{F}_{N}$ $/$ $%
\mathcal{R}_{N}$ $\overset{\text{Group}}{=}$ $\Gamma _{N}^{2},$
\end{center}

for all $n$ $\in $ $\Bbb{N}.$ Therefore, there exists a natural
generator-preserving group-isomorphism $\alpha ^{n}$ from $\Gamma _{N}^{2}$
onto $\frak{M}_{n}^{2}(N)$ as in (2.3.5).
\end{proof}

\strut The structure theorem (2.3.4) is the generalized result of (2.1.7).
And the group-isomorphism $\alpha ^{n}$ of (2.3.5) generalizes $\alpha ^{2}$
of (2.1.8). Therefore, we obtain the following theorem, generalizing (2.1.9).

\begin{theorem}
Let $\Gamma $\strut $_{N}^{2}$ be the group in the sense of (1.0.1), for
some $N$ $\in $ $\Bbb{N}.$ Then there exist $n$-dimensional Hilbert-space
representation $\left( \Bbb{C}^{n},\text{ }\alpha ^{n}\right) $ of $\Gamma
_{N}^{2},$ where $\alpha ^{n}$ are in the sense of (2.3.5), for all $n$ $\in 
$ $\Bbb{N}$ $\setminus $ $\{1\}.$ i.e.,

(2.3.6)\qquad $\left( \Bbb{C}^{n},\text{ }\alpha ^{n}\right) $ form $n$%
-dimensional representations of $\Gamma _{N}^{2},$

for all $n$ $\in $ $\Bbb{N}.$
\end{theorem}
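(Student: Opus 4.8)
The plan is to deduce (2.3.6) from the structure theorem (2.3.4) in exactly the manner that (2.1.9) was obtained from (2.1.7). The only thing a Hilbert-space representation of $\Gamma _{N}^{2}$ requires is a group homomorphism from $\Gamma _{N}^{2}$ into the invertible linear operators on $\Bbb{C}^{n}$, and such a map is already handed to us by the generator-preserving isomorphism $\alpha ^{n}$.

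First I would fix $n$ $\in $ $\Bbb{N}$ $\setminus $ $\{1\}$ and invoke (2.3.4) to obtain the group-isomorphism $\alpha ^{n}:\Gamma _{N}^{2}\rightarrow \frak{M}_{n}^{2}(N)$ satisfying (2.3.5). Being a group-isomorphism, $\alpha ^{n}$ automatically respects the group operations, so that $\alpha ^{n}(g_{1}g_{2})=\alpha ^{n}(g_{1})\,\alpha ^{n}(g_{2})$ and $\alpha ^{n}(g^{-1})=\left( \alpha ^{n}(g)\right) ^{-1}$ in $\frak{M}_{n}^{2}(N)$, for all $g,$ $g_{1},$ $g_{2}$ $\in $ $\Gamma _{N}^{2}$.

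Next I would observe that, since $\frak{M}_{n}^{2}(N)$ $\subseteq $ $M_{n}(\Bbb{C})$, every matrix $\alpha ^{n}(g)$ is an invertible linear operator acting on the Hilbert space $\Bbb{C}^{n}$ (invertibility coming from the fact that $\alpha^n(g)$ is a group element, and boundedness being automatic in finite dimensions); the multiplicativity just recorded then says that $\alpha ^{n}$ is an action of $\Gamma _{N}^{2}$ on $\Bbb{C}^{n}$ by such operators. Hence $\left( \Bbb{C}^{n},\text{ }\alpha ^{n}\right) $ is a Hilbert-space representation of $\Gamma _{N}^{2}$, which is precisely (2.3.6). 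Since $n$ was arbitrary in $\Bbb{N}$ $\setminus $ $\{1\}$, and the case $n=2$ is already covered by (2.1.9), this holds for all $n$ $\in $ $\Bbb{N}$ $\setminus $ $\{1\}$.

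I expect there to be essentially no obstacle: all of the genuine work has already been carried out, namely verifying that each generator $A_{n}(a_{1},...,a_{n})$ squares to $I_{n}$ (Theorem (2.2.11)) and that these order-2 generators satisfy no further relations, so that $\frak{M}_{n}^{2}(N)$ $\overset{\text{Group}}{=}$ $\Gamma _{N}^{2}$ (Theorem (2.3.4)). The present statement is an immediate corollary, and the only bookkeeping to verify is that $\alpha ^{n}$ preserves products and inverses, which is automatic from its being a group-isomorphism.
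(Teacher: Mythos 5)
Your proposal is correct and follows essentially the same route as the paper's own proof: both handle $n=2$ via (2.1.9) and then, for $n\geq 3$, invoke the structure theorem (2.3.4) with its generator-preserving isomorphism $\alpha^{n}$ of (2.3.5), observing that the images lie in $\frak{M}_{n}^{2}(N)\subseteq M_{n}(\Bbb{C})$ and hence act on $\Bbb{C}^{n}$. Your explicit check that $\alpha^{n}$ preserves products and inverses is exactly the bookkeeping the paper records in its proof of (2.1.9), so nothing is missing.
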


\begin{proof}
\strut If $n$ $=$ $2,$ as we have seen in (2.1.9), there exists a
2-dimensional Hilbert-space representation $\left( \Bbb{C}^{2},\text{ }%
\alpha ^{2}\right) $, where $\alpha ^{2}$ is the group-isomorphism in the
sense of (2.1.8).

Suppose $n$ $\geq $ $3$ in $\Bbb{N}.$ For such $n,$ two groups $\Gamma
_{N}^{2}$ and $\frak{M}_{n}^{2}(N)$ are isomorphic with a group-isomorphism $%
\alpha ^{n}$ $:$ $\Gamma _{N}^{2}$ $\rightarrow $ $\frak{M}_{n}^{2}(N)$ of
(2.3.5), by (2.3.4). It shows that the images $\alpha ^{n}(g)$ are $(n\times
n)$-matrices acting on the $n$-dimensional Hilbert space $\Bbb{C}^{n}.$ So,
the pair $\left( \Bbb{C}^{n},\text{ }\alpha ^{n}\right) $ forms an $n$%
-dimensional representation of $\Gamma _{N}^{2},$ for all $n$ $\in $ $\Bbb{N}%
.$
\end{proof}

\strut The above two theorems show that, for a fixed group $\Gamma _{N}^{2}$
of (1.0.1), one has a system

\begin{center}
$\left\{ \left( \Bbb{C}^{n},\text{ }\alpha ^{n}\right) \right\}
_{n=2}^{\infty }$
\end{center}

of Hilbert-space representations, and the corresponding isomorphic groups

\begin{center}
$\left\{ \frak{M}_{n}^{2}(N)\right\} _{n=2}^{\infty },$
\end{center}

acting on $\Bbb{C}^{n}$ (or, acting in $M_{n}(\Bbb{C})$).

\begin{example}
Let $\Gamma _{2}^{2}$ be the finitely presented group,

\begin{center}
$\Gamma _{2}^{2}$ $=$ $\left\langle \{x_{1},\text{ }x_{2}\},\text{ }%
\{x_{1}^{2}=x_{2}^{2}\}\right\rangle .$
\end{center}

Fix $n$ $=$ $3.$ Now, take the following strongly distinct triples,

\begin{center}
$W_{1}$ $=$ $(t_{1},$ $t_{2},$ $t_{3}),$ $W_{2}$ $=$ $(s_{1},$ $s_{2},$ $%
s_{3}),$
\end{center}

in $\Bbb{C}_{\symbol{126}1}^{\times }\times \left( \Bbb{C}^{\times }\right)
^{2},$ and construct two matrices,

\strut

$A_{3}(W_{1})$ $=$ $\left( 
\begin{array}{ccc}
1 & t_{3} & \frac{-t_{2}t_{3}}{1-t_{1}} \\ 
0 & t_{1} & t_{2} \\ 
0 & \frac{1-t_{1}^{2}}{t_{2}} & -t_{1}
\end{array}
\right) ,$ $A_{3}(W_{2})$ $=$ $\left( 
\begin{array}{ccc}
1 & s_{3} & \frac{-s_{2}s_{3}}{1-s_{1}} \\ 
0 & s_{1} & s_{2} \\ 
0 & \frac{1-s_{1}^{2}}{s_{2}} & -s_{1}
\end{array}
\right) ,$

\strut

in $M_{3}(\Bbb{C}).$ The group $\frak{M}_{3}^{2}(2)$ is established as the
reduced free group

\begin{center}
$\left\langle \{A_{3}(W_{1}),\text{ }A_{3}(W_{2})\}\right\rangle $
\end{center}

generated by $A_{3}(W_{1})$ and $A_{3}(W_{2})$ in $M_{3}(\Bbb{C}).$Then

\begin{center}
$\frak{M}_{3}^{2}(2)$ $\overset{\text{Group}}{=}$ $\Gamma _{2}^{2}.$
\end{center}

So, one has a natural $3$-dimensional representation $\left( \Bbb{C}^{3},%
\text{ }\alpha ^{3}\right) ,$ where $\alpha ^{3}$ is the group-isomorphism
satisfying

\begin{center}
$\alpha ^{3}\left( x_{l}\right) $ $=$ $A_{3}(W_{l}),$ for all $l$ $=$ $1,$ $%
2.$
\end{center}

\strut
\end{example}

\section{\strut Application: A Group Algebra $\mathcal{A}_{2}$ Induced by $%
\Gamma _{2}^{2}$}

In Section 2, we showed that the finitely presented group $\Gamma _{N}^{2}$
of (1.0.1) can have its family of finite-dimensional Hilbert-space
representations

\begin{center}
$\left\{ \left( \Bbb{C}^{n},\text{ }\alpha ^{n}\right) \right\}
_{n=2}^{\infty },$
\end{center}

since it is group-isomorphic to $n$-dimensional, order-2, $N$-generator
groups $\frak{M}_{n}^{2}(N)$ in $M_{n}(\Bbb{C}),$ for all $n$ $\in $ $\Bbb{N}
$ $\setminus $ $\{1\},$ where

\begin{center}
$\frak{M}_{n}^{2}(N)$ $=$ $\left\langle
\{A_{n}(W_{k})\}_{k=1}^{N}\right\rangle $ in $M_{n}(\Bbb{C}),$
\end{center}

where $W_{1},$ ..., $W_{N}$ are strongly mutually distinct $n$-tuples of $%
\Bbb{C}_{\symbol{126}1}^{\times }\times \left( \Bbb{C}^{\times }\right)
^{n-1},$ for all $n$ $\in $ $\Bbb{N}$ $\setminus $ $\{1\}.$

\subsection{Algebraic Observation for Group Elements of $\Gamma _{2}^{2}$}

In this section, we concentrate on the case where $N$ $=$ $2,$ i.e., we
study the group elements of $\Gamma _{2}^{2}$ in detail. As a special case
of (2.3.4),

(3.1.1)

\begin{center}
$\left\langle \{x_{1},x_{2}\},\text{ }\{x_{1}^{2}=x_{2}^{2}=e_{2}\}\right%
\rangle =\Gamma _{2}^{2}$ $\overset{\text{Group}}{=}$ $\frak{M}_{2}^{2}(2).$
\end{center}

Consider $\Gamma _{2}^{2}$ pure-algebraically. Each element $g$ of $\Gamma
_{2}^{2}$ has its expression,

(3.1.2)

\begin{center}
$g$ $=$ $x_{i_{1}}^{k_{1}}x_{i_{2}}^{k_{2}}\cdot \cdot \cdot
x_{i_{n}}^{k_{i_{n}}},$ for some $n$ $\in $ $\Bbb{N},$
\end{center}

as in (1.0.3), for some $(i_{1},$ ..., $i_{n})$ $\in $ $\{1,$ $2\}^{n},$ and 
$(k_{1},$ ..., $k_{n})$ $\in $ $\Bbb{Z}^{n}.$

However, by the relation on $\Gamma _{2}^{2},$

\begin{center}
$x_{1}^{2}$ $=$ $e$ $\overset{denote}{=}$ $e_{2}$ $=$ $x_{2}^{2}$ in $\Gamma
_{2}^{2},$
\end{center}

one has that

(3.1.3)

\begin{center}
$x_{l}^{-1}$ $=$ $x_{l},$ for $l$ $=$ $1,$ $2,$
\end{center}

\strut and

\begin{center}
$x_{l}^{2k+1}$ $=$ $x_{l},$ for all $k$ $\in $ $\Bbb{N}.$
\end{center}

In other words, the generators $x_{1}$ and $x_{2}$ are ``self-invertible,''
and ``indeed of order-2,'' in $\Gamma _{2}^{2}.$ The above two conditions in
(3.1.3) can be summarized by

(3.1.3)$^{\prime }$

\begin{center}
$x_{l}^{2n+1}$ $=$ $x_{l},$ for $l$ $=$ $1,$ $2,$ for all $n$ $\in $ $\Bbb{Z}.$
\end{center}

\strut So, the general expression (3.1.2) of $g$ is in fact

(3.1.4)

\begin{center}
$g$ $=$ $x_{j_{1}}x_{j_{2}}...x_{j_{n}}$ in $\Gamma _{2}^{2},$
\end{center}

for some $(j_{1},$ ..., $j_{n})$ $\in $ $\{1,$ $2\}^{n},$ for some $n$ $\in $
$\Bbb{N},$ by (3.1.3), or by (3.1.3)$^{\prime }.$

By the characterization,

\begin{center}
$\Gamma _{2}^{2}$ $\overset{\text{Group}}{=}$ $\mathcal{F}_{2}$ $/$ $%
\mathcal{R}_{2},$
\end{center}

and by the definition of the noncommutative (non-reduced) free group $%
\mathcal{F}_{2},$ the expression (3.1.4) of $g$ goes to

(3.1.5)

\begin{center}
$g$ $=$ $\left\{ 
\begin{array}{lll}
e=e_{2} &  & \text{or} \\ 
x_{1} &  & \text{or} \\ 
x_{2} &  & \text{or} \\ 
(x_{1}x_{2})^{n}x_{1} &  & \text{or} \\ 
\left( x_{1}x_{2}\right) ^{n} &  & \text{or} \\ 
\left( x_{2}x_{1}\right) ^{n}x_{2} &  & \text{or} \\ 
\left( x_{2}x_{1}\right) ^{n}, &  & 
\end{array}
\right. $
\end{center}

in $\Gamma _{2}^{2},$ for all $n$ $\in $ $\Bbb{N}.$

\begin{proposition}
Let $g$ $\in $ $\Gamma _{2}^{2}.$ Then $g$ is only one of the forms in
(3.1.5). $\square $
\end{proposition}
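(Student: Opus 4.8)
The plan is to take the generic word (3.1.4) for $g$ and push it to a canonical \emph{alternating} form by repeatedly applying the order-$2$ relations, then read off which of the seven families in (3.1.5) it lands in. First I would invoke (3.1.3) and (3.1.3)$^{\prime}$: since $x_l^{-1}=x_l$ and $x_l^{2k+1}=x_l$, every $g\in\Gamma_2^2$ is already a positive word $g=x_{j_1}x_{j_2}\cdots x_{j_n}$ with each $j_i\in\{1,2\}$, exactly as recorded in (3.1.4); no inverses or higher powers survive.

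Next I would run a finite reduction. Whenever two consecutive letters coincide, say $x_{j_i}=x_{j_{i+1}}$, the block $x_{j_i}x_{j_{i+1}}=x_{j_i}^2=e$ may be deleted, shortening the word by two letters. The length strictly decreases at each step and is bounded below by $0$, so after finitely many deletions one reaches a word $w$ with no two equal adjacent letters; thus $g=w$ in $\Gamma_2^2$, where $w$ is \emph{alternating}. I would then classify $w$ by its length $m$ and its first letter. If $m=0$ then $w=e=e_2$. If $w$ begins with $x_1$, alternation forces $w=x_1x_2x_1x_2\cdots$, hence $w=(x_1x_2)^n$ when $m=2n$ and $w=(x_1x_2)^nx_1$ when $m=2n+1$; symmetrically, if $w$ begins with $x_2$ then $w=(x_2x_1)^n$ when $m=2n$ and $w=(x_2x_1)^nx_2$ when $m=2n+1$ (the bare generators $x_1,x_2$ being the odd cases with $n=0$). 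These are precisely the seven entries of (3.1.5), settling the existence half of the statement.

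The remaining, and genuinely delicate, point is that the forms are \emph{mutually exclusive} as group elements: two syntactically different entries of (3.1.5) must never collapse to the same $g$. For this I would use the identification $\Gamma_2^2\overset{\text{Group}}{=}\mathcal{F}_2/\mathcal{R}_2$ with $\mathcal{R}_2=\{g_1^2=e,\,g_2^2=e\}$, for which the alternating words are exactly the normal-form transversal of the quotient (the infinite dihedral group $(\Bbb{Z}/2\Bbb{Z})\ast(\Bbb{Z}/2\Bbb{Z})$), so that no relation outside those generated by $\mathcal{R}_2$ identifies distinct alternating words. Alternatively, staying inside the paper's own machinery, I could separate any two candidate forms concretely by evaluating them under the faithful representation $\alpha^2$ of (2.1.8) and comparing the resulting matrices in $\frak{M}_2^2(2)$.

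I expect this uniqueness step to be the main obstacle. The reduction above is a routine termination argument, whereas uniqueness is the substantive assertion that $\mathcal{R}_2$ generates \emph{all} relations of $\Gamma_2^2$; without it, the word ``only'' in the statement is unjustified. I would therefore lean on the free-product normal-form theorem for the clean argument, and keep the representation-theoretic separation via $\alpha^2$ in reserve as a self-contained fallback consistent with Section 2.
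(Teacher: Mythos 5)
Your proposal is correct, and its core — using (3.1.3)/(3.1.3)$'$ to pass to positive words, cancelling adjacent equal letters until the word alternates, and then reading off the seven families by parity and first letter — is exactly the argument the paper has in mind; indeed the paper offers no separate proof at all (the proposition is marked with $\square$), treating it as immediate from the discussion surrounding (3.1.2)--(3.1.4) and the identification $\Gamma_2^2 \overset{\text{Group}}{=} \mathcal{F}_2/\mathcal{R}_2$. Where you go beyond the paper is the mutual-exclusivity step: the paper never asserts, and never later uses, the fact that distinct entries of (3.1.5) represent distinct group elements, so for the paper's purposes ``only one of the forms'' means exhaustiveness, which your termination argument already settles. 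Your extra step is nonetheless correct and worthwhile: recognizing $\Gamma_2^2$ as the free product $(\Bbb{Z}/2\Bbb{Z})\ast(\Bbb{Z}/2\Bbb{Z})$ (the infinite dihedral group) and invoking the normal-form theorem for free products is the clean way to make the stronger reading of ``only'' rigorous, and it also quietly shores up a point the paper leaves informal, namely that $\mathcal{R}_N$ generates all relations (this is exactly the gap in the paper's proof of (2.1.7), where the claim that no nontrivial alternating product of generators collapses to a generator or to $I_2$ is asserted rather than proved). Your fallback via $\alpha^2$ is weaker than you suggest, however: faithfulness of $\alpha^2$ is established in the paper only through the isomorphism (2.1.7), whose proof presupposes the very non-collapsing you would be trying to verify, so as a self-contained argument it would require an explicit matrix computation (e.g., a trace or eigenvalue computation showing the $(X_1X_2)^k$ are pairwise distinct); the free-product normal form is the right primary route.
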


\strut Let $g$ $\in $ $\Gamma _{2}^{2}$ $\setminus $ $\{e,$ $x_{1},$ $%
x_{2}\}.$ Say,

\begin{center}
$g$ $=$ $x_{1}x_{2}x_{1}x_{2}...x_{1}x_{2}x_{1}.$
\end{center}

Then it is self-invertible. Indeed,

$\qquad \qquad g^{2}$ $=$ $\left( x_{1}x_{2}...x_{1}x_{2}x_{1}\right) \left(
x_{1}x_{2}...x_{1}x_{2}x_{1}\right) $

$\qquad \qquad \qquad =$ $x_{1}x_{2}...x_{1}x_{2}\left( x_{1}^{2}\right)
x_{2}...x_{1}x_{2}x_{1}$

$\qquad \qquad \qquad =$ $x_{1}x_{2}...x_{1}x_{2}ex_{2}...x_{1}x_{2}x_{1}$

$\qquad \qquad \qquad =$ $x_{1}x_{2}...x_{1}\left( x_{2}^{2}\right)
x_{1}...x_{1}x_{2}x_{1}$

$\qquad \qquad \qquad =$ $...$

$\qquad \qquad \qquad =$ $x_{1}^{2}$ $=$ $e,$

i.e.,

(3.1.6)

\begin{center}
$\left( (x_{1}x_{2})^{n}x_{1}\right) ^{2}$ $=$ $e,$ for all $n$ $\in $ $\Bbb{%
N}.$
\end{center}

Similarly, one obtains that

(3.1.7)

\begin{center}
$\left( (x_{2}x_{1})^{n}x_{2}\right) ^{2}$ $=$ $e$ in $\Gamma _{2}^{2},$
\end{center}

\strut for all $n$ $\in $ $\Bbb{N}.$

Now, let

\begin{center}
$g$ $=$ $x_{1}x_{2}x_{1}x_{2}...x_{1}x_{2}$ in $\Gamma _{2}^{2}.$
\end{center}

Then

\begin{center}
$g^{-1}$ $=$ $x_{2}x_{1}x_{2}x_{1}...x_{2}x_{1},$
\end{center}

with

\begin{center}
$\left| g\right| $ $=$ $\left| g^{-1}\right| $ in $\Bbb{N},$
\end{center}

where $\left| w\right| $ means the\emph{\ length of} $w$ $\in $ $\Gamma
_{2}^{2}.$ (For example, if $g$ $=$ $x_{1}x_{2}x_{1}x_{2}x_{1}$ $\in $ $%
\Gamma _{2}^{2},$ then $\left| g\right| $ $=$ $5.$)

Indeed,

\begin{center}
$gg^{-1}$ $=$ $\left( x_{1}x_{2}...x_{1}x_{2}\right) \left(
x_{2}x_{1}...x_{2}x_{1}\right) $ $=$ $e.$
\end{center}

So, one has that

(3.1.8)

\begin{center}
$\left( (x_{1}x_{2})^{n}\right) ^{-1}$ $=$ $(x_{2}x_{1})^{n},$
\end{center}

and equivalently,

(3.1.9)

\begin{center}
$\left( (x_{2}x_{1})^{n}\right) ^{-1}$ $=$ $(x_{1}x_{2})^{n},$
\end{center}

for all $n$ $\in $ $\Bbb{N}$.

\begin{proposition}
Let $g$ $\in $ $\Gamma _{2}^{2}$ $\setminus $ $\{e\},$ and let $\left|
g\right| $ means the length of $g$ in $\{x_{1},$ $x_{2}\}$ in $\Gamma
_{2}^{2}.$

(3.1.10) If $\left| g\right| $ is odd in $\Bbb{N},$ then $g^{-1}=g$ in $%
\Gamma _{2}^{2}.$

(3.1.11) If $\left| g\right| $ is even in $\Bbb{N},$ then

\begin{center}
$g$ $=$ $(x_{1}x_{2})^{k},$ or $\left( x_{2}x_{1}\right) ^{k},$
\end{center}

and

\begin{center}
$g^{-1}$ $=$ $(x_{2}x_{1})^{k},$ respectively, $(x_{1}x_{2})^{k},$
\end{center}

for some $k$ $\in $ $\Bbb{N}.$
\end{proposition}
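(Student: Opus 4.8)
The plan is to reduce everything to the normal form already obtained in the preceding proposition, namely the classification (3.1.5), and then split into cases according to the parity of $|g|$. First I would record the length of each of the seven canonical forms: the identity $e$ is excluded by hypothesis; the elements $x_1$, $x_2$, $(x_1x_2)^nx_1$, and $(x_2x_1)^nx_2$ have lengths $1$, $1$, $2n+1$, and $2n+1$ respectively, all odd; while $(x_1x_2)^n$ and $(x_2x_1)^n$ have length $2n$, which is even. Since by (3.1.5) every $g\in\Gamma_2^2\setminus\{e\}$ is exactly one of these forms, the odd-length elements are precisely $\{x_1,\,x_2,\,(x_1x_2)^nx_1,\,(x_2x_1)^nx_2\}$ and the even-length elements are precisely $\{(x_1x_2)^n,\,(x_2x_1)^n\}$.

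For the odd case (3.1.10), I would invoke the self-invertibility relations already established: $x_1^{-1}=x_1$ and $x_2^{-1}=x_2$ from (3.1.3), together with $\big((x_1x_2)^nx_1\big)^2=e$ from (3.1.6) and $\big((x_2x_1)^nx_2\big)^2=e$ from (3.1.7). Each of these says exactly that the element squares to the identity, hence equals its own inverse, giving $g^{-1}=g$ for every odd-length $g$.

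For the even case (3.1.11), I would appeal directly to (3.1.8) and (3.1.9), which state $\big((x_1x_2)^n\big)^{-1}=(x_2x_1)^n$ and $\big((x_2x_1)^n\big)^{-1}=(x_1x_2)^n$. Thus an even-length $g$ is either $(x_1x_2)^k$, whose inverse is $(x_2x_1)^k$, or $(x_2x_1)^k$, whose inverse is $(x_1x_2)^k$, which is the asserted statement with $k=n$.

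The argument is essentially a bookkeeping exercise, so there is no genuinely hard step; the only point requiring care is that ``the length'' be well-defined, i.e.\ that the canonical form in (3.1.5) is unique. This follows from the group-isomorphism $\Gamma_2^2\cong\mathcal{F}_2/\mathcal{R}_2$ together with the observation, recorded in the proof of (2.1.7), that no nontrivial alternating word in $x_1,x_2$ collapses under the only relation $x_1^2=x_2^2=e$; consequently the parity of $|g|$ is unambiguous, and the case split above is both exhaustive and mutually exclusive.
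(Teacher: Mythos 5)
Your proposal is correct and follows essentially the same route as the paper's own proof: a case split on the parity of $\left| g\right| $ using the classification (3.1.5), then (3.1.3), (3.1.6), (3.1.7) for the odd case and (3.1.8), (3.1.9) for the even case. Your added remark on the well-definedness of the length (uniqueness of the normal form in (3.1.5)) is a careful touch the paper leaves implicit, but it does not change the argument.
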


\begin{proof}
Suppose $\left| g\right| $ is odd in $\Bbb{N}.$ Then, by (3.1.5), the group
element $g$ is one of

\begin{center}
$x_{1},$ or $x_{2},$ or $(x_{1}x_{2})^{n}x_{1},$ or $(x_{2}x_{1})^{n}x_{2},$
\end{center}

for $n$ $\in $ $\Bbb{N}.$ By (3.1.6) and (3.1.7), in such cases,

\begin{center}
$g^{2}$ $=$ $e$ in $\Gamma _{2}^{2}.$
\end{center}

So, the statement (3.1.10) holds.

\strut

Now, assume that $\left| g\right| $\strut is even in $\Bbb{N}.$ Then, by
(3.1.5), this element $g$ is either

\begin{center}
$\left( x_{1}x_{2}\right) ^{k},$ or $(x_{2}x_{1})^{k},$ for $k$ $\in $ $\Bbb{%
N}.$
\end{center}

By (3.1.8) and (3.1.9), one has that

\begin{center}
$\left( \left( x_{1}x_{2}\right) ^{k}\right) ^{-1}$ $=$ $(x_{2}x_{1})^{k},$
\end{center}

respectively,

\begin{center}
$\left( (x_{2}x_{1})^{k}\right) ^{-1}$ $=$ $(x_{1}x_{2})^{k},$
\end{center}

for all $k$ $\in $ $\Bbb{N}.$

Therefore, the statement (3.1.11) holds, too.
\end{proof}

\strut By the self-invertibility of the group-identity $e,$ the generators $%
x_{1}$, $x_{2},$ and the group elements $g$ with odd length $\left| g\right| 
$, we are interested in the cases where

\begin{center}
$\left| g\right| $ is even.
\end{center}

Equivalently, we are interested in the cases where

\begin{center}
$g$ $=$ $\left( x_{1}x_{2}\right) ^{k}$, or $g$ $=$ $\left(
x_{2}x_{1}\right) ^{k}$ in $\Gamma _{2}^{2}.$
\end{center}

\subsection{\strut Analytic-and-Combinatorial Observation for Elements of $%
\Gamma _{2}^{2}$ in $M_{2}(\Bbb{C})$}

As we have seen, the group $\Gamma _{2}^{2}$ is group-isomorphic to the $2$%
-dimensional, order-2, $2$-generator subgroup $\frak{M}_{2}^{2}(2)$ of $%
M_{2}(\Bbb{C})$ under our representation $\left( \Bbb{C}^{2},\text{ }\alpha
^{2}\right) .$ In particular, $\frak{M}_{2}^{2}(2)$ is generated by two
matrices

\begin{center}
$A_{2}(t_{1},$ $s_{1}),$ and $A_{2}(t_{2},$ $s_{2})$
\end{center}

\strut for the strongly mutually distinct pairs $(t_{1},$ $s_{1})$ and $%
(t_{2},$ $s_{2})$ in $\Bbb{C}^{\times }\times \Bbb{C}^{\times }.$ More
precisely,

\begin{center}
$A_{2}(t_{1},$ $s_{1})$ $=$ $\left( 
\begin{array}{cc}
t_{1} & s_{1} \\ 
\frac{1-t_{1}^{2}}{s_{1}} & -t_{1}
\end{array}
\right) ,$ and $A_{2}(t_{2},$ $s_{2})$ $=$ $\left( 
\begin{array}{cc}
t_{2} & s_{2} \\ 
\frac{1-t_{2}^{2}}{s_{2}} & -t_{2}
\end{array}
\right) $
\end{center}

in $M_{2}(\Bbb{C}).$

\strut

\textbf{Notation} In the rest of this paper, we denote $A_{2}(t_{1},$ $%
s_{1}) $ and $A_{2}(t_{2},$ $s_{2}),$ by $X_{1}$ and $X_{2},$ respectively,
for convenience. $\square $

\strut \strut \strut

As we mentioned at the end of Section 3.1, we are interested in
group-elements $g$ formed by

(3.2.1)

\begin{center}
$g$ $=$ $\left( x_{1}x_{2}\right) ^{k}$ $\in $ $\Gamma _{2}^{2},$ for $k$ $%
\in $ $\Bbb{N}.$
\end{center}

i.e., under $\alpha ^{2},$ if $g$ is not of the form of (3.2.1), we obtain a
self-invertible matrix

\begin{center}
$\alpha ^{2}(g),$ such that $\left( \alpha ^{2}(g)\right) ^{2}$ $=$ $I_{2}$
\end{center}

in $\frak{M}_{2}^{2}(2)$ $\subset $ $M_{2}(\Bbb{C}).$

\strut However, if $g$ is of (3.2.1) in $\Gamma _{2}^{2},$ then

(3.2.2)

\begin{center}
$\alpha ^{2}\left( g\right) $ $=$ $\alpha ^{2}\left( (x_{1}x_{2})^{k}\right) 
$ $=$ $\left( X_{1}X_{2}\right) ^{k},$
\end{center}

and hence,

\begin{center}
$\left( \alpha ^{2}(g)\right) ^{-1}$ $=$ $\alpha ^{2}\left( g^{-1}\right) $ $%
=$ $\left( X_{2}X_{1}\right) ^{k},$
\end{center}

in $\frak{M}_{2}^{2}(2).$

Observe that

(3.2.3)

\begin{center}
$\left( X_{1}X_{2}\right) +\left( X_{2}X_{1}\right) $ $=$ $\left(
2t_{1}t_{2}+\frac{s_{1}(1-t_{2}^{2})}{s_{2}}+\frac{s_{2}(1-t_{1}^{2})}{s_{1}}%
\right) I_{2}$
\end{center}

from the straightforward computation in $M_{2}(\Bbb{C}).$

Consider now that

\begin{center}
$\det \left( X_{1}\right) $ $=$ $-1$ $=$ $\det (X_{2}),$
\end{center}

and

$\qquad \det \left( X_{1}+X_{2}\right) $ $=$ $\det \left( 
\begin{array}{cc}
t_{1}+t_{2} & s_{1}+s_{2} \\ 
\frac{1-t_{1}^{2}}{s_{1}}+\frac{1-t_{2}^{2}}{s_{2}} & -t_{1}-t_{2}
\end{array}
\right) $

\strut

$\qquad =$ $-\left( t_{1}^{2}+2t_{1}t_{2}+t_{2}^{2}\right) -\left(
1-t_{1}^{2}+\frac{s_{1}\left( 1-t_{2}^{2}\right) }{s_{2}}+\frac{s_{2}\left(
1-t_{1}^{2}\right) }{s_{1}}+1-t_{2}^{2}\right) $

\strut

$\qquad =$ $-t_{1}^{2}-2t_{1}t_{2}-t_{2}^{2}-1+t_{1}^{2}-\frac{s_{1}\left(
1-t_{2}^{2}\right) }{s_{2}}-\frac{s_{2}\left( 1-t_{1}^{2}\right) }{s_{1}}%
-1+t_{2}^{2}$

\strut

$\qquad =$ $-2t_{1}t_{2}-\frac{s_{1}\left( 1-t_{2}^{2}\right) }{s_{2}}-\frac{%
s_{2}\left( 1-t_{1}^{2}\right) }{s_{1}}-2,$

i.e.

\begin{center}
$\det \left( X_{1}+X_{2}\right) $ $=$ $-2-\left( 2t_{1}t_{2}+\frac{%
s_{1}\left( 1-t_{2}^{2}\right) }{s_{2}}+\frac{s_{2}\left( 1-t_{1}^{2}\right) 
}{s_{1}}\right) ,$
\end{center}

\strut where $\det (\cdot )$ means the \emph{determinant on} $M_{2}(\Bbb{C})$

So, one can get that

(3.2.4)

\begin{center}
$\det \left( X_{1}\right) +\det \left( X_{2}\right) -\det \left(
X_{1}+X_{2}\right) $ $=$ $2t_{1}t_{2}+\frac{s_{1}\left( 1-t_{2}^{2}\right) }{%
s_{2}}+\frac{s_{2}\left( 1-t_{1}^{2}\right) }{s_{1}}.$
\end{center}

\begin{lemma}
Let $X_{1}$ and $X_{2}$ be the generating matrices of $\frak{M}_{2}^{2}(2)$
as above. Then there exists $\varepsilon _{0}$ $\in $ $\Bbb{C},$ such that

(3.2.5)

\begin{center}
$\left( X_{1}X_{2}\right) $ $+$ $(X_{2}X_{1})$ $=$ $\varepsilon _{0}$ $I_{2}$
in $M_{2}(\Bbb{C}),$
\end{center}

and

\begin{center}
$
\begin{array}{ll}
\varepsilon _{0} & =2t_{1}t_{2}+\frac{s_{1}\left( 1-t_{2}^{2}\right) }{s_{2}}%
+\frac{s_{2}\left( 1-t_{1}^{2}\right) }{s_{1}} \\ 
&  \\ 
& =\det \left( X_{1}\right) +\det \left( X_{2}\right) -\det \left(
X_{1}+X_{2}\right) .
\end{array}
$
\end{center}
\end{lemma}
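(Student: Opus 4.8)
The plan is to prove both the existence of $\varepsilon_0$ and the two stated formulas by exploiting the special structure of the generating matrices, namely that every $A_2(a,b)$ is \emph{traceless}. Indeed, directly from the form (2.1.1) one has $\mathrm{tr}\,A_2(a,b) = a + (-a) = 0$, so in particular $\mathrm{tr}(X_1) = \mathrm{tr}(X_2) = 0$; and by the involution property (2.1.2), $X_1^2 = X_2^2 = I_2$. First I would record these facts, since tracelessness is what makes the whole computation collapse to a single scalar.

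The key step is the standard $2\times 2$ anticommutator identity obtained by polarizing the Cayley--Hamilton relation: for any $M,N \in M_2(\Bbb{C})$,
\[
MN + NM = \mathrm{tr}(M)\,N + \mathrm{tr}(N)\,M + \bigl(\mathrm{tr}(MN) - \mathrm{tr}(M)\,\mathrm{tr}(N)\bigr)\,I_2 .
\]
Applying this with $M = X_1$, $N = X_2$ and using $\mathrm{tr}(X_1) = \mathrm{tr}(X_2) = 0$ collapses the right-hand side to $\mathrm{tr}(X_1 X_2)\,I_2$. This simultaneously establishes (3.2.5) and identifies the scalar as $\varepsilon_0 = \mathrm{tr}(X_1 X_2)$, with no need to track individual off-diagonal entries. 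Equivalently, one may simply carry out the two products $X_1 X_2$ and $X_2 X_1$ by hand, exactly as in (3.2.3): upon adding, the off-diagonal entries cancel and the two diagonal entries coincide, which is precisely the displayed value of $\varepsilon_0$.

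To obtain the closed form I would compute $\mathrm{tr}(X_1 X_2)$ from (2.1.1). The $(1,1)$ entry of $X_1 X_2$ is $t_1 t_2 + \tfrac{s_1(1-t_2^2)}{s_2}$ and the $(2,2)$ entry is $t_1 t_2 + \tfrac{s_2(1-t_1^2)}{s_1}$, so their sum is $2t_1 t_2 + \tfrac{s_1(1-t_2^2)}{s_2} + \tfrac{s_2(1-t_1^2)}{s_1}$, matching (3.2.3). For the determinant expression I would invoke the companion polarization of the determinant on $M_2(\Bbb{C})$,
\[
\det(M+N) = \det(M) + \det(N) + \mathrm{tr}(M)\,\mathrm{tr}(N) - \mathrm{tr}(MN),
\]
which follows from $\det(M) = \tfrac12\bigl((\mathrm{tr}\,M)^2 - \mathrm{tr}(M^2)\bigr)$. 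Specializing again to $\mathrm{tr}(X_1) = \mathrm{tr}(X_2) = 0$ yields $\det(X_1) + \det(X_2) - \det(X_1+X_2) = \mathrm{tr}(X_1 X_2) = \varepsilon_0$, which is the second equality in the statement and reproduces the direct computation (3.2.4) (one checks en route that $\det(X_i) = -1$).

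The only real obstacle is the explicit bookkeeping of the traces and determinants in terms of $t_1,t_2,s_1,s_2$, which is routine; the conceptual content---that both quantities equal $\mathrm{tr}(X_1 X_2)$---follows immediately from tracelessness together with the two polarization identities, so no genuine difficulty arises beyond the algebra already displayed in (3.2.3) and (3.2.4).
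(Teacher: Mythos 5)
Your proof is correct, but it takes a genuinely different route from the paper's. The paper argues by brute force: it computes both products $X_{1}X_{2}$ and $X_{2}X_{1}$ entry by entry (display (3.2.3)), observes that the off-diagonal entries cancel in the sum, and separately expands $\det(X_{1})+\det(X_{2})-\det(X_{1}+X_{2})$ by hand (display (3.2.4)); the proof of the lemma itself is literally a one-line citation of those two computations. You instead isolate the structural reason the anticommutator is scalar: $X_{1}$ and $X_{2}$ are traceless, so the polarized Cayley--Hamilton identity $MN+NM=\mathrm{tr}(M)\,N+\mathrm{tr}(N)\,M+\left(\mathrm{tr}(MN)-\mathrm{tr}(M)\,\mathrm{tr}(N)\right)I_{2}$ collapses to $X_{1}X_{2}+X_{2}X_{1}=\mathrm{tr}(X_{1}X_{2})\,I_{2}$, and the companion identity $\det(M+N)=\det(M)+\det(N)+\mathrm{tr}(M)\,\mathrm{tr}(N)-\mathrm{tr}(MN)$ yields the determinant formula with no further computation. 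Both identities are standard and your applications of them are sound, as is your evaluation of the diagonal of $X_{1}X_{2}$, which matches the paper's $\varepsilon_{0}$. What your approach buys: it identifies $\varepsilon_{0}$ intrinsically as $\mathrm{tr}(X_{1}X_{2})$, shows the lemma holds for \emph{any} pair of traceless matrices in $M_{2}(\Bbb{C})$ (the specific form (2.1.1) enters only when evaluating that trace), and avoids computing $X_{2}X_{1}$ and $\det(X_{1}+X_{2})$ altogether. What the paper's buys: it is entirely self-contained, needing nothing beyond matrix multiplication. Both arrive at the same scalar.
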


\begin{proof}
The proof of (3.2.5) is done by (3.2.3) and (3.2.4).
\end{proof}

\strut Therefore, by (3.2.5), one can observe that

$\qquad \left( X_{1}X_{2}+X_{2}X_{1}\right) ^{2}=$ $\left( X_{1}X_{2}\right)
^{2}+X_{1}X_{2}X_{2}X_{1}$

$\qquad \qquad \qquad \qquad \qquad \qquad \qquad \quad
+X_{2}X_{1}X_{1}X_{2}+\left( X_{2}X_{1}\right) ^{2}$

$\qquad \qquad \qquad \qquad \qquad =$ $\left( X_{1}X_{2}\right) ^{2}+\left(
X_{2}X_{1}\right) ^{2}+2I_{2},$

and hence,

\begin{center}
$
\begin{array}{ll}
\left( X_{1}X_{2}\right) ^{2}+\left( X_{2}X_{1}\right) ^{2} & =\left(
X_{1}X_{2}+X_{2}X_{1}\right) ^{2}-2I_{2} \\ 
& =\left( \varepsilon _{0}I_{2}\right) ^{2}-2I_{2}=\left( \varepsilon
_{0}^{2}-2\right) I_{2},
\end{array}
$
\end{center}

where $\varepsilon _{0}$ is in the sense of (3.2.5). i.e., we obtain that

(3.2.6)

\begin{center}
$\left( X_{1}X_{2}\right) ^{2}+\left( X_{2}X_{1}\right) ^{2}$ $=$ $\left(
\varepsilon _{0}^{2}-2\right) I_{2}.$
\end{center}

By (3.2.5) and (3.2.6), we have that

\begin{center}
$(X_{1}X_{2})^{1}+(X_{2}X_{1})^{1}$ $=$ $\varepsilon _{0}I_{2},$
\end{center}

and

\begin{center}
$\left( X_{1}X_{2}\right) ^{2}+(X_{2}X_{1})^{2}$ $=$ $\left( \varepsilon
_{0}^{2}-2\right) I_{2},$
\end{center}

where

\begin{center}
$\varepsilon _{0}$ $=$ $\det (X_{1})+\det (X_{2})-\det \left(
X_{1}+X_{2}\right) ,$
\end{center}

as in (3.2.5).

More generally, we obtain the following recurrence relation.

\begin{theorem}
Let $X_{1}$ and $X_{2}$ be the generating matrices of $\frak{M}_{2}^{2}(2)$
in $M_{2}(\Bbb{C}).$ If we denote

\begin{center}
$\frak{X}_{k}$ $\overset{denote}{=}$ $\left( X_{1}X_{2}\right)
^{k}+(X_{2}X_{1})^{k},$ for all $k$ $\in $ $\Bbb{N},$
\end{center}

then the following recurrence relation is obtained:

(3.2.7)

\begin{center}
$\frak{X}_{1}$ $=$ $\varepsilon _{0}$ $I_{2},$ and $\frak{X}_{2}$ $=$ $%
\left( \varepsilon _{0}^{2}-2\right) $ $I_{2},$
\end{center}

and

\begin{center}
$\frak{X}_{n}$ $=$ $\varepsilon _{0}$ $\frak{X}_{n-1}$ $-$ $\frak{X}_{n-2},$
for all $n$ $\geq $ $3$ in $\Bbb{N},$
\end{center}

in $M_{2}(\Bbb{C}),$ where

\begin{center}
$\varepsilon _{0}$ $=$ $\det \left( X_{1}\right) +\det (X_{2})-\det
(X_{1}+X_{2}).$
\end{center}
\end{theorem}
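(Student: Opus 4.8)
The plan is to handle the two base cases and the three-term recurrence separately. The identities $\frak{X}_1 = \varepsilon _0 I_2$ and $\frak{X}_2 = (\varepsilon _0^2 - 2) I_2$ are precisely (3.2.5) and (3.2.6), which are already established; so the only real work is to verify $\frak{X}_n = \varepsilon _0 \frak{X}_{n-1} - \frak{X}_{n-2}$ for $n \geq 3$.

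The key structural observation is that $X_2 X_1$ is the \emph{inverse} of $X_1 X_2$. Indeed, since each generating matrix has order $2$, that is $X_1^2 = X_2^2 = I_2$ by (2.1.2) (equivalently (2.1.6)), one computes $(X_1 X_2)(X_2 X_1) = X_1 (X_2^2) X_1 = X_1^2 = I_2$ and likewise $(X_2 X_1)(X_1 X_2) = I_2$. Writing $U = X_1 X_2$, this says $X_2 X_1 = U^{-1}$, so that $\frak{X}_k = U^k + U^{-k}$ for every $k \in \Bbb{N}$, and (3.2.5) now reads $U + U^{-1} = \varepsilon _0 I_2$.

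With this in hand the recurrence is a one-line manipulation. Multiply the relation $U + U^{-1} = \varepsilon _0 I_2$ against $\frak{X}_{n-1} = U^{n-1} + U^{-(n-1)}$:
\[ \varepsilon _0 \frak{X}_{n-1} = (U + U^{-1})(U^{n-1} + U^{-(n-1)}) = U^n + U^{-(n-2)} + U^{n-2} + U^{-n}. \]
Regrouping the four terms as $(U^n + U^{-n}) + (U^{n-2} + U^{-(n-2)}) = \frak{X}_n + \frak{X}_{n-2}$ and solving for $\frak{X}_n$ yields $\frak{X}_n = \varepsilon _0 \frak{X}_{n-1} - \frak{X}_{n-2}$, valid for all $n \geq 3$ (in fact for all $n \geq 2$ once one sets $\frak{X}_0 = 2 I_2$).

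I do not expect any serious obstacle here; the only point demanding care is the bookkeeping of the two middle cross-terms $U \cdot U^{-(n-1)}$ and $U^{-1} \cdot U^{n-1}$, which must combine to $U^{-(n-2)} + U^{n-2} = \frak{X}_{n-2}$ rather than to something else. An alternative, equally short route bypasses even this: since $\det U = \det(X_1)\det(X_2) = (-1)(-1) = 1$ and $\mathrm{tr}\,U = \varepsilon _0$, the Cayley--Hamilton theorem gives $U^2 = \varepsilon _0 U - I_2$; multiplying by $U^{n-2}$ and adding the corresponding relation for $U^{-1}$ (which shares the same trace and determinant) produces the recurrence directly.
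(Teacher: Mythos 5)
Your proof is correct, and it rests on exactly the two facts the paper's proof uses: the scalar identity $X_{1}X_{2}+X_{2}X_{1}=\varepsilon _{0}I_{2}$ of (3.2.5), and the relation $\left( X_{1}X_{2}\right) \left( X_{2}X_{1}\right) =I_{2}$ coming from $X_{1}^{2}=X_{2}^{2}=I_{2}$. The difference is in the packaging, and yours is the cleaner one. The paper proves the recurrence by induction: it treats $n=3$ separately (via the expansion of $\left( X_{1}X_{2}+X_{2}X_{1}\right) ^{3}$) and then runs an inductive step whose displayed computation, namely $\frak{X}_{n_{0}+1}=\frak{X}_{n_{0}}\left( X_{1}X_{2}+X_{2}X_{1}\right) -\left( X_{1}X_{2}\right) ^{n_{0}}\left( X_{2}X_{1}\right) -\left( X_{2}X_{1}\right) ^{n_{0}}\left( X_{1}X_{2}\right) $, is precisely your cross-term cancellation; the inductive hypothesis (3.2.9) that the paper invokes is in fact never used, since this identity yields $\frak{X}_{n_{0}+1}=\varepsilon _{0}\frak{X}_{n_{0}}-\frak{X}_{n_{0}-1}$ directly from (3.2.5) and $\left( X_{1}X_{2}\right) \left( X_{2}X_{1}\right) =I_{2}$. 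By setting $U=X_{1}X_{2}$ and observing $X_{2}X_{1}=U^{-1}$, you make this transparent: the recurrence is the identity $\left( U+U^{-1}\right) \left( U^{n-1}+U^{-(n-1)}\right) =\left( U^{n}+U^{-n}\right) +\left( U^{n-2}+U^{-(n-2)}\right) $, valid for each $n\geq 3$ on its own, so no induction and no $n=3$ anchor are needed (and, as you note, the convention $\frak{X}_{0}=2I_{2}$ extends it to $n=2$). Your Cayley--Hamilton aside is a genuinely different second route that the paper does not consider, and it explains conceptually why $\varepsilon _{0}$ behaves like a trace; just note that the claim $\mathrm{tr}(U)=\varepsilon _{0}$ is itself most easily read off from (3.2.5), since for a $2\times 2$ matrix of determinant $1$ one has $U^{-1}=\mathrm{tr}(U)I_{2}-U$, so that route too ultimately leans on the same lemma.
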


\begin{proof}
By (3.2.5), indeed, one has

\begin{center}
$\frak{X}_{1}$ $=$ $\varepsilon _{0}$ $I_{2},$
\end{center}

and by (3.2.6),

\begin{center}
$\frak{X}_{2}$ $=$ $\left( \varepsilon _{0}^{2}-2\right) $ $I_{2}.$
\end{center}

\strut

Suppose $n$ $=$ $3$ in $\Bbb{N}.$ Then

$\qquad \qquad \frak{X}_{3}$ $=$ $\left( X_{1}X_{2}\right) ^{3}+\left(
X_{2}X_{1}\right) ^{3}$

$\qquad \qquad \qquad =$ $\left( X_{1}X_{2}+X_{2}X_{1}\right) ^{3}-\left(
3X_{1}X_{2}+3X_{2}X_{1}\right) $

by (3.1.5)

$\qquad \qquad \qquad =$ $\left( \frak{X}_{1}\right) ^{3}-3\frak{X}_{1}$ $=$ 
$\left( \varepsilon _{0}I_{2}\right) ^{3}-3\left( \varepsilon
_{0}I_{2}\right) $

$\qquad \qquad \qquad =$ $\left( \varepsilon _{0}^{3}-3\varepsilon
_{0}\right) I_{2}$

in $M_{2}(\Bbb{C}).$

Observe now that

$\qquad \varepsilon _{0}\frak{X}_{2}-\frak{X}_{1}$ $=$ $\varepsilon
_{0}\left( \varepsilon _{0}^{2}-2\right) I_{2}-\varepsilon _{0}I_{2}$

$\qquad \qquad \qquad \qquad =$ $\left( \varepsilon _{0}^{3}-2\varepsilon
_{0}\right) I_{2}-\varepsilon _{0}I_{2}$

$\qquad \qquad \qquad \qquad =$ $\left( \varepsilon _{0}^{3}-3\varepsilon
_{0}\right) I_{2},$

in $M_{2}(\Bbb{C}).$

Thus, one obtains that

(3.2.8)

\begin{center}
$\frak{X}_{3}$ $=$ $\left( \varepsilon _{0}^{3}-3\varepsilon _{0}\right)
I_{2}$ $=$ $\varepsilon _{0}\frak{X}_{2}-\frak{X}_{1}.$
\end{center}

So, if $n$ $=$ $3,$ then the relation (3.2.7) holds true.

\strut

Assume now that the statement

(3.2.9)

\begin{center}
$\frak{X}_{n_{0}}$ $=$ $\varepsilon _{0}\frak{X}_{n_{0}-1}-\frak{X}%
_{n_{0}-2} $
\end{center}

hold for a fixed $n_{0}$ $\geq 3$ in $\Bbb{N}.$ Then

$\qquad \frak{X}_{n_{0}+1}$ $=$ $\left(
(X_{1}X_{2})^{n_{0}}+(X_{2}X_{1})^{n_{0}}\right) \left(
X_{1}X_{2}+X_{2}X_{1}\right) $

$\qquad \qquad \qquad \qquad \qquad -\left( X_{1}X_{2}\right) ^{n_{0}}\left(
X_{2}X_{1}\right) -\left( X_{2}X_{1}\right) ^{n_{0}}\left( X_{1}X_{2}\right) 
$

$\qquad \qquad \qquad =$ $\left(
(X_{1}X_{2})^{n_{0}}+(X_{2}X_{1})^{n_{0}}\right) \left( \varepsilon
_{0}I\right) $

$\qquad \qquad \qquad \qquad \qquad -$ $\left(
(X_{1}X_{2})^{n_{0}-1}+(X_{2}X_{1})^{n_{0}-1}\right) $

by the assumption (3.2.9)

$\qquad \qquad \qquad =$ $\varepsilon _{0}\frak{X}_{n_{0}}-\frak{X}%
_{n_{0}-1},$

i.e., for the fixed $n_{0}$ $\geq $ $3,$

(3.2.10)

\begin{center}
$\frak{X}_{n_{0}+1}$ $=$ $\varepsilon _{0}\frak{X}_{n_{0}}-\frak{X}%
_{n_{0}-1}.$
\end{center}

Since $n_{0}$ is arbitrary in $\Bbb{N}$ $\setminus $ $\{1,$ $2\},$ we can
conclude that

\begin{center}
$\frak{X}_{n+1}=\varepsilon \frak{X}_{n}-\frak{X}_{n-1}$,
\end{center}

for all $n$ $\in $ $\Bbb{N}$ $\setminus $ $\{1\},$ by the induction, because
of (3.2.8), (3.2.9) and (3.2.10).

\strut

Therefore, we obtain the recurrence relation:

\begin{center}
$\frak{X}_{1}$ $=$ $\varepsilon _{0}I_{2},$ $\frak{X}_{2}$ $=$ $\left(
\varepsilon _{0}^{2}-2\right) I_{2},$
\end{center}

and

\begin{center}
$\frak{X}_{n}$ $=$ $\varepsilon _{0}\frak{X}_{n-1}-\frak{X}_{n-2},$
\end{center}

for all $n$ $\geq $ $3$ in $\Bbb{N}.$

Equivalently, the relation (3.2.7) holds.
\end{proof}

By the recurrence relation (3.2.7), we obtain that:

$\qquad \qquad X_{1}X_{2}+X_{2}X_{1}$ $=$ $\varepsilon _{0}I_{2},$

$\qquad \qquad \left( X_{1}X_{2}\right) ^{2}+\left( X_{2}X_{1}\right) ^{2}$ $%
=$ $\left( \varepsilon _{0}^{2}-2\right) I_{2},$

$\qquad \qquad \left( X_{1}X_{2}\right) ^{3}+(X_{2}X_{1})^{3}$ $=$ $\left(
\varepsilon _{0}^{3}-3\varepsilon _{0}\right) I_{2}.$

and

\begin{center}
$
\begin{array}{ll}
(X_{1}X_{2})^{4}+(X_{2}X_{1})^{4} & =\varepsilon _{0}\left( \varepsilon
_{0}^{3}-3\varepsilon _{0}\right) I_{2}-\left( \varepsilon _{0}^{2}-2\right)
I_{2} \\ 
& =\left( \varepsilon _{0}^{4}-4\varepsilon _{0}^{2}+2\right) I_{2},
\end{array}
$
\end{center}

in $M_{2}(\Bbb{C}).$

Inductively, one can verify that:

\begin{corollary}
There exists a functional sequence $\left( f_{n}\right) _{n=1}^{\infty },$
such that

(3.2.11)

\begin{center}
$\left( X_{1}X_{2}\right) ^{n}+(X_{2}X_{1})^{n}$ $=$ $\left(
f_{n}(\varepsilon _{0})\right) I_{2},$
\end{center}

in $M_{2}(\Bbb{C}).$ $\square $
\end{corollary}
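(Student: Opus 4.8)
The plan is to extract the sequence $(f_n)$ directly from the three-term recurrence of the preceding theorem (3.2.7), and then to confirm the claimed identity by induction on $n$. First I would define polynomials $f_n \in \Bbb{C}[x]$ by imitating the recurrence that governs the matrices $\frak{X}_n = (X_1X_2)^n + (X_2X_1)^n$: set $f_1(x) = x$ and $f_2(x) = x^2 - 2$, and then put
\begin{center}
$f_n(x) = x\,f_{n-1}(x) - f_{n-2}(x),$ for all $n \geq 3$ in $\Bbb{N}.$
\end{center}
Each $f_n$ is a well-defined polynomial of degree $n$, so the sequence $(f_n)_{n=1}^{\infty}$ exists as claimed; the substantive content is only that evaluating $f_n$ at $\varepsilon_0$ reproduces the matrix $\frak{X}_n$.

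To prove $\frak{X}_n = f_n(\varepsilon_0)\,I_2$, I would argue by strong induction. The base cases $n = 1$ and $n = 2$ are exactly the initial data recorded in (3.2.7): $\frak{X}_1 = \varepsilon_0 I_2 = f_1(\varepsilon_0) I_2$ and $\frak{X}_2 = (\varepsilon_0^2 - 2) I_2 = f_2(\varepsilon_0) I_2$. For the inductive step, fix $n \geq 3$ and suppose $\frak{X}_{n-1} = f_{n-1}(\varepsilon_0) I_2$ and $\frak{X}_{n-2} = f_{n-2}(\varepsilon_0) I_2$. Applying the matrix recurrence (3.2.7) and then the defining recurrence for $f_n$ yields
\begin{center}
$\frak{X}_n = \varepsilon_0 \frak{X}_{n-1} - \frak{X}_{n-2} = \left( \varepsilon_0 f_{n-1}(\varepsilon_0) - f_{n-2}(\varepsilon_0) \right) I_2 = f_n(\varepsilon_0)\,I_2,$
\end{center}
which closes the induction and establishes (3.2.11).

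Honestly there is no serious obstacle here: the corollary is a formal repackaging of (3.2.7), and all the genuine work --- computing $\varepsilon_0$ and verifying the recurrence for $\frak{X}_n$ --- is already done. The one point worth noting explicitly is that the right-hand side remains a scalar multiple of $I_2$ at every stage, which holds because $I_2$ is central in $M_2(\Bbb{C})$, so the matrix recurrence collapses onto the scalar recurrence in $\Bbb{C}[\varepsilon_0]$ solved by $(f_n)$. Conceptually this is forced: since $X_l^2 = I_2$ gives $X_2X_1 = (X_1X_2)^{-1}$ and $\det(X_1X_2) = 1$, the two eigenvalues of $X_1X_2$ are $\lambda, \lambda^{-1}$ with $\lambda + \lambda^{-1} = \varepsilon_0$, so that $\frak{X}_n = \lambda^n + \lambda^{-n} = f_n(\varepsilon_0)$ and the $f_n$ are exactly the Chebyshev/Lucas-type polynomials --- the very link that will make the traces of Section 4 reproduce the Lucas numbers.
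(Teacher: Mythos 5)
Your proposal is correct and follows exactly the route the paper intends: the corollary is stated as an immediate inductive consequence of the recurrence (3.2.7), and your argument --- defining $f_{1}(z)=z$, $f_{2}(z)=z^{2}-2$, $f_{n}=zf_{n-1}-f_{n-2}$ and running a two-step induction using $\frak{X}_{n}=\varepsilon _{0}\frak{X}_{n-1}-\frak{X}_{n-2}$ --- is precisely that verification, made explicit (your polynomials agree with the paper's examples $f_{3}(z)=z^{3}-3z$ and $f_{4}(z)=z^{4}-4z^{2}+2$). The closing eigenvalue remark, that $X_{2}X_{1}=(X_{1}X_{2})^{-1}$ forces $\frak{X}_{n}=\left( \lambda ^{n}+\lambda ^{-n}\right) I_{2}$ with $\lambda +\lambda ^{-1}=\varepsilon _{0}$, is a nice conceptual addition not in the paper, but it is not needed for the proof.
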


\strut For instance, if $(f_{n})_{n=1}^{\infty }$ is in the sense of
(3.2.11), then

\begin{center}
$f_{1}(z)$ $=$ $z,$ $f_{2}(z)$ $=$ $z^{2}-2,$ $f_{3}(z)$ $=$ $z^{3}-3z,$
\end{center}

and

\begin{center}
$f_{4}(z)$ $=$ $z^{4}-4z^{2}+2,$
\end{center}

etc.

\strut The following theorem provides the refined result of (3.2.11).

\begin{theorem}
Let $X_{1}$ and $X_{2}$ be the generating matrices of a $2$-dimensional,
order-2, 2-generator subgroup $\frak{M}_{2}^{2}(2)$ of $M_{2}(\Bbb{C})$ in
the sense of (3.2.1), and let

\begin{center}
$\frak{X}_{l}$ $=$ $\left( X_{1}X_{2}\right) ^{l}+(X_{2}X_{1})^{l}$ $\in $ $%
M_{2}(\Bbb{C}),$
\end{center}

for all $l$ $\in $ $\Bbb{N}.$ Then there exists a functional sequence $%
\left( f_{n}\right) _{n=1}^{\infty }$ such that

\begin{center}
$\frak{X}_{n}$ $=$ $f_{n}(\varepsilon _{0})$ $I_{2}$ in $M_{2}(\Bbb{C}),$
\end{center}

where

\begin{center}
$\varepsilon _{0}$ $=$ $\det (X_{1})+\det (X_{2})-\det (X_{1}+X_{2}).$
\end{center}

Moreover,

(3.2.12)

\begin{center}
$
\begin{array}{ll}
f_{n}(z) & =\sum_{k=0}^{[\frac{n}{2}]}(-1)^{k}\frac{n}{k}\left( 
\begin{array}{c}
n-k-1 \\ 
k-1
\end{array}
\right) z^{n-2k} \\ 
&  \\ 
& =\sum_{k=0}^{[\frac{n}{2}]}(-1)^{k}\frac{n}{n-k}\left( 
\begin{array}{c}
n-k \\ 
k
\end{array}
\right) z^{n-2k},
\end{array}
$
\end{center}

where

\begin{center}
$[\frac{n}{2}]$ $=$ the maximal integer $\leq $ $\frac{n}{2},$
\end{center}

and

\begin{center}
$\left( 
\begin{array}{c}
m \\ 
l
\end{array}
\right) $ $=$ $\frac{m!}{l!(m-l)!},$ for all $m,$ $l$ $\in $ $\Bbb{N},$
\end{center}

for all $n$ $\in $ $\Bbb{N}.$
\end{theorem}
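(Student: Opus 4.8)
The plan is to prove the explicit formula (3.2.12) by induction, leaning on the three-term recurrence already secured in (3.2.7). By Corollary (3.2.11) there is a sequence $(f_n)$ with $\frak{X}_n = f_n(\varepsilon_0)I_2$, and (3.2.7) tells us precisely that $f_1(z)=z$, $f_2(z)=z^2-2$, and $f_n(z)=zf_{n-1}(z)-f_{n-2}(z)$ for all $n\ge 3$. Hence it suffices to show that the polynomial on the right-hand side of (3.2.12) obeys these same two initial conditions and the same recurrence; since a second-order linear recurrence has a unique solution determined by its first two terms, this forces equality with $f_n$.

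Before the induction I would reconcile the two displayed expressions. For $k\ge 1$ one has the elementary identity $\frac{1}{k}\binom{n-k-1}{k-1}=\frac{1}{n-k}\binom{n-k}{k}$, both sides equalling $\frac{(n-k-1)!}{k!\,(n-2k)!}$, so the two sums in (3.2.12) agree term by term (the $k=0$ term being $z^n$ in each). I would therefore fix the candidate $g_n(z):=\sum_{k=0}^{[\frac{n}{2}]}(-1)^k\frac{n}{n-k}\binom{n-k}{k}z^{n-2k}$ and read off the base cases $g_1(z)=z$ and $g_2(z)=z^2-2$ directly from the definition.

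For the inductive step with $n\ge 3$ I would match the coefficient of $z^{n-2k}$ on the two sides of $g_n=z\,g_{n-1}-g_{n-2}$. The term $z\,g_{n-1}$ contributes (via its index $j=k$) the coefficient $(-1)^k\frac{n-1}{n-1-k}\binom{n-1-k}{k}$, and $-g_{n-2}$ contributes (via its index $i=k-1$) the coefficient $(-1)^k\frac{n-2}{n-1-k}\binom{n-1-k}{k-1}$. Setting their sum equal to the coefficient $(-1)^k\frac{n}{n-k}\binom{n-k}{k}$ of $g_n$ and cancelling the common factor $(-1)^k(n-2-k)!/(k!\,(n-2k)!)$ reduces the whole step to the single polynomial identity
\[
n(n-1-k)=(n-1)(n-2k)+(n-2)k,
\]
whose right-hand side expands to $n^2-n-nk=n(n-1-k)$.

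I expect the only genuine friction to be the boundary bookkeeping rather than this identity, which is trivial. When $n$ is even the top term $k=n/2$ (the constant term $z^0$) is supplied solely by $-g_{n-2}$, while the leading term $k=0$ (namely $z^n$) comes solely from $z\,g_{n-1}$; I would check these two endpoints separately, noting that the binomial coefficient on the absent side vanishes, so the matched identity continues to hold there. Once $g_n$ is shown to satisfy the same initial data and recurrence as $f_n$, the uniqueness remark completes the proof of (3.2.12).
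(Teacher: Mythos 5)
Your proposal is correct and takes essentially the same route as the paper: both arguments show that the explicit polynomial of (3.2.12) satisfies the initial conditions and the three-term recurrence $f_{n+1}(z) = z f_{n}(z) - f_{n-1}(z)$ forced by (3.2.7) and (3.2.11), and then conclude by uniqueness of the solution of a second-order recurrence with given first two terms. Your coefficient-by-coefficient reduction to the identity $n(n-1-k) = (n-1)(n-2k) + (n-2)k$, with the two endpoint checks, is a tidier execution of the paper's index-shifting sum manipulation (which must track a leftover boundary term $\frak{Y}$ split into even/odd cases, and which incidentally miswrites the base case as $f_{2}(z) = z^{2}-1$ where you correctly obtain $z^{2}-2$).
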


\begin{proof}
\strut By (3.2.7) and (3.2.11), there exists a functional sequence $%
(f_{n})_{n=1}^{\infty }$ such that

\begin{center}
$\frak{X}_{n}$ $=$ $f_{n}(\varepsilon _{0})$ $I_{2}$ in $M_{2}(\Bbb{C}),$
for all $n$ $\in $ $\Bbb{N},$
\end{center}

where

\begin{center}
$\varepsilon _{0}$ $=$ $\det (X_{1})+\det (X_{2})-\det (X_{1}+X_{2}).$
\end{center}

For instance,

(3.2.13)

\begin{center}
$f_{1}(z)$ $=$ $z,$ $f_{2}(z)$ $=$ $z^{2}-2,$ and $f_{3}(z)$ $=$ $z^{3}-3z,$
\end{center}

etc. So, it suffices to show that each $n$-th entry of the sequence $%
f_{n}(z) $ satisfies (3.2.12), for all $n$ $\in $ $\Bbb{N}.$

Say $n$ $=$ $1.$ Then the function $f_{1}(z)$ of (3.2.12) satisfies

$\qquad \qquad f_{1}(z)$ $=$ $\sum_{k=0}^{[\frac{1}{2}]}(-1)^{k}\frac{n}{n-k}%
\left( 
\begin{array}{c}
n-k-1 \\ 
k-1
\end{array}
\right) z^{n-2k}$

\strut

$\qquad \qquad \qquad =$ $\sum_{k=0}^{0}(-1)^{k}\frac{n}{n-k}\left( 
\begin{array}{c}
n-k \\ 
k
\end{array}
\right) z^{n-2k}$

\strut

$\qquad \qquad \qquad =$ $(-1)^{0}$ $\frac{1}{1-0}\left( 
\begin{array}{c}
1-0 \\ 
0
\end{array}
\right) z^{1-0}$ $=$ $z,$

satisfying

(3.2.14)

\begin{center}
$f_{1}(z)$ $=$ $z.$
\end{center}

If $n$ $=$ $2,$ then the function $f_{2}(z)$ of (3.2.12) goes to

$\qquad f_{2}(z)$ $=$ $\sum_{k=0}^{1}(-1)^{k}\frac{n}{n-k}\left( 
\begin{array}{c}
n-k \\ 
k
\end{array}
\right) z^{n-2k}$

\strut

$\qquad \qquad =$ $(-1)^{0}\frac{2}{2-0}\left( 
\begin{array}{c}
2-0 \\ 
0
\end{array}
\right) z^{2-0}$ $+$ $(-1)^{1}\frac{2}{2-1}\left( 
\begin{array}{c}
2-1 \\ 
1
\end{array}
\right) z^{2-2}$

\strut

$\qquad \qquad =$ $z^{2}$ $-1,$

satisfying

(3.2.15)

\begin{center}
$f_{2}(z)$ $=$ $z^{2}-1.$
\end{center}

\strut Therefore, the formula (3.2.12) holds true where $n$ $=$ $1,$ $2,$ by
(3.2.13).

\strut

Now, we consider the functions $f_{n}(z)$ of (3.2.12) satisfy the recurrence
relation;

(3.2.16)

\begin{center}
$f_{1}(z)$ $=$ $z,$ $f_{2}(z)$ $=$ $z^{2}-1,$
\end{center}

and

\begin{center}
$f_{n+1}(z)$ $=$ $zf_{n}(z)$ $-$ $f_{n-1}(z),$
\end{center}

for all $n$ $\geq $ $2$ in $\Bbb{N}.$

First of all, the functions $f_{1}$ and $f_{2}$ satisfy the initial
condition of the relation (3.2.16), by (3.2.14) and (3.2.15). So,
concentrate on the cases where $n$ $\geq $ $2$ in $\Bbb{N}.$

Observe that

\strut
\strut

$zf_{n}(z)-f_{n-1}(z)$

\strut

$=$ $z\left( \sum_{k=0}^{[\frac{n}{2}]}(-1)^{k}\text{ }\frac{n}{k}\left( 
\begin{array}{c}
n-k-1 \\ 
k-1
\end{array}
\right) z^{n-2k}\right) -\sum_{k=0}^{[\frac{n-1}{2}]}(-1)^{k}\frac{n-1}{k}%
\left( 
\begin{array}{c}
n-k-2 \\ 
k-1
\end{array}
\right) z^{n-1-2k}$

\strut

$=$ $\sum_{k=0}^{[\frac{n}{2}]}(-1)^{k}\frac{n}{k}\left( 
\begin{array}{c}
n-k-1 \\ 
k-1
\end{array}
\right) z^{n-2k+1}$ $-$ $\sum_{k=1}^{[\frac{n+1}{2}]}(-1)^{k-1}$ $\frac{n-1}{%
k-1}\left( 
\begin{array}{c}
n-k-1 \\ 
k-2
\end{array}
\right) z^{n-1-2k+2}$

\strut

$=$ $\sum_{k=0}^{[\frac{n}{2}]}(-1)^{k}\frac{n}{k}\left( 
\begin{array}{c}
n-k-1 \\ 
k-1
\end{array}
\right) z^{n+1-2k}+\sum_{k=1}^{[\frac{n+1}{2}]}(-1)^{k}\frac{n-1}{k-1}\left( 
\begin{array}{c}
n-k-1 \\ 
k-2
\end{array}
\right) z^{n+1-2k}$

\strut

$=$ $z^{n+1}+\sum_{k=1}^{[\frac{n}{2}]}(-1)^{k}\frac{n}{k}\left( 
\begin{array}{c}
n-k-1 \\ 
k-1
\end{array}
\right) z^{n+1-2k}+\sum_{k=1}^{[\frac{n+1}{2}]}(-1)^{k}\frac{n-1}{k-1}\left( 
\begin{array}{c}
n-k-1 \\ 
k-2
\end{array}
\right) z^{n+1-2k}$

\strut

$=$ $z^{n+1}+\sum_{k=1}^{[\frac{n}{2}]}(-1)^{k}\left( \frac{n}{k}\left( 
\begin{array}{c}
n-k-1 \\ 
k-1
\end{array}
\right) +\frac{n-1}{k-1}\left( 
\begin{array}{c}
n-k-1 \\ 
k-2
\end{array}
\right) \right) z^{n+1-2k}$

\strut

$+$ $(-1)^{[\frac{n+1}{2}]}\frac{n-1}{[\frac{n+1}{2}]-1}$ $\left( 
\begin{array}{c}
n-[\frac{n+1}{2}]-1 \\ 
\lbrack \frac{n+1}{2}]-1
\end{array}
\right) $ $z^{n+1-2[\frac{n+1}{2}]}$

\strut

$=$ $z^{n+1}+$ $\sum_{k=1}^{[\frac{n}{2}]}(-1)^{k}$ $\left( \frac{n(n-k-1)!}{%
k(n-k-1-k+1)!(k-1)!}+\frac{(n-1)(n-k-1)!}{(k-1)(n-k+k+2)!(k-2)!}\right)
z^{n+1-2k}$ 
\strut
$+ \enspace \frak{Y}$

\strut

where

(3.2.17)

$\hspace{35pt}\hspace{35pt}\frak{Y}$ $=$ $(-1)^{[\frac{n+1}{2}]}\frac{n-1}{[\frac{n+1}{2}]-1}$ $\left( 
\begin{array}{c}
n-[\frac{n+1}{2}]-1 \\ 
\lbrack \frac{n+1}{2}]-1
\end{array}
\right) $ $z^{n+1-2[\frac{n+1}{2}]},$

and then

\strut

$=$ $z^{n+1}+\sum_{k=1}^{[\frac{n}{2}]}\left( -1\right) ^{k}\left( \frac{%
n(n-k-1)!}{(n-2k)!k!}+\frac{n(n-k-1)!-(n-k-1)!}{(n-2k+1)!(k-1)!}\right)
z^{n+1-2k}$ $+$ $\frak{Y}$

\strut

$=$ $z^{n+1}+\sum_{k=1}^{[\frac{n}{2}]}(-1)^{k}\left( \frac{%
(n-2k+1)n(n-k-1)!+kn(n-k-1)!-k(n-k-1)!}{k(n-2k+1)!(k-1)!}\right) z^{n+1-2k}$ 
$+$ $\frak{Y}$

\strut

$=$ $z^{n+1}+\sum_{k=1}^{[\frac{n}{2}]}(-1)^{k}\left( \frac{%
(n^{2}-2kn+n+nk-k)(n-k-1)!}{k(n-2k+1)!(k-1)!}\right) z^{n+1-2k}+\frak{Y}$

\strut

$=$ $z^{n+1}+\sum_{k=1}^{[\frac{n}{2}]}(-1)^{k}\left( \frac{%
(n^{2}-nk+n-k)(n-k-1)!}{k(n-2k+1)!(k-1)!}\right) z^{n+1-2k}+\frak{Y}$

\strut

$=$ $z^{n+1}+\sum_{k=1}^{[\frac{n}{2}]}(-1)^{k}\left( \frac{%
(n+1)(n-k)(n-k-1)!}{k(n-2k+1)!(k-1)!}\right) z^{n+1-2k}+$ $\frak{Y}$

\strut

$=$ $z^{n+1}+\sum_{k=1}^{[\frac{n}{2}]}(-1)^{k}$ $\frac{n+1}{k}$ $\frac{%
(n-k)!}{(n-k-(k-1))!(k-1)!}$ $z^{n+1-2k}$ $+$ $\frak{Y}$

\strut

$=$ $z^{n+1}+\sum_{k=1}^{[\frac{n}{2}]}(-1)^{k}$ $\frac{n+1}{k}$ $\left( 
\begin{array}{c}
n-k \\ 
k-1
\end{array}
\right) $ $z^{n+1-2k}+$ $\frak{Y}$

\strut

$=$ $\sum_{k=0}^{[\frac{n}{2}]}(-1)^{k}$ $\frac{n+1}{k}$ $\left( 
\begin{array}{c}
n-k \\ 
k-1
\end{array}
\right) $ $z^{n+1-2k}+\frak{Y},$

\strut

i.e., for any $n$ $\geq $ $2,$ we obtain

(3.2.18)

\begin{center}
$zf_{n}(z)-f_{n-1}(z)$ $=$ $\sum_{k=0}^{[\frac{n}{2}]}(-1)^{k}$ $\frac{n+1}{k%
}$ $\left( 
\begin{array}{c}
n-k \\ 
k-1
\end{array}
\right) $ $z^{n+1-2k}+\frak{Y},$
\end{center}

where $\frak{Y}$ is in the sense of (3.2.17).

\strut

Notice that if $n$ $\geq $ $2$ is even in $\Bbb{N},$ then

\begin{center}
$[\frac{n}{2}]=[\frac{n+1}{2}]$ in $\Bbb{N},$
\end{center}

and hence, the first term of (3.2.18) contains the second one. Therefore,
one has

(3.2.19)

\begin{center}
$n$ $\in $ $2\Bbb{N}$ $\Rightarrow $ $zf_{n}(z)-f_{n-1}(z)$ $=$ $%
\sum_{k=0}^{[\frac{n+1}{2}]}(-1)^{k}$ $\frac{n+1}{k}$ $\left( 
\begin{array}{c}
n-k \\ 
k-1
\end{array}
\right) $ $z^{n+1-2k},$
\end{center}

if and only if

\begin{center}
$n$ $\in $ $2\Bbb{N}$ $\Longrightarrow $ $zf_{n}(z)-f_{n-1}(z)$ $=$ $%
f_{n+1}(z).$
\end{center}

\strut

Now, assume $n$ $\geq $ $2$ is odd in $\Bbb{N}.$ Then

$\qquad \frak{Y}$ $=$ $(-1)^{[\frac{n+1}{2}]}$ $\left( \frac{2n-2}{n+1-2}%
\right) $ $\left( \frac{\left( n-[\frac{n+1}{2}]-1\right) !}{\left( n-[\frac{%
n+1}{2}]-1-[\frac{n+1}{2}]+2\right) !\left( [\frac{n+1}{2}]-2\right) !}%
\right) $ $z^{n+1-2[\frac{n+1}{2}]} $

\strut

$\qquad \qquad =$ $(-1)^{[\frac{n+1}{2}]}\left( \frac{2n-2}{n-1}\right)
\left( \frac{\left( n-[\frac{n+1}{2}]-1\right) !}{\left( n-2[\frac{n+1}{2}%
]+1\right) !\left( [\frac{n+1}{2}]-2\right) !}\right) $ $z^{0}.$

\strut

$\qquad \qquad =$ $(-1)^{[\frac{n+1}{2}]}\left( 2\right) $ $\left( \frac{%
\left( n-[\frac{n+1}{2}]\right) !}{\left( n-2[\frac{n+1}{2}]+1\right)
!\left( n-[\frac{n+1}{2}]\right) \left( [\frac{n+1}{2}]-2\right) !}\right) $ $z^{0}.$

\strut

$\qquad \qquad =$ $(-1)^{[\frac{n+1}{2}]}$ $\left( \frac{n+1}{[\frac{n+1}{2}]%
}\right) \left( \frac{[\frac{n+1}{2}]-1}{n-[\frac{n+1}{2}]}\right) \left( 
\begin{array}{c}
n-[\frac{n+1}{2}] \\ 
\\ 
\lbrack \frac{n+1}{2}]-1
\end{array}
\right) $ $z^{0}.$

\strut

$\qquad \qquad =$ $(-1)^{[\frac{n+1}{2}]}\left( \frac{n+1}{[\frac{n+1}{2}]}%
\right) \left( \frac{n+1-2}{2n-n-1}\right) \left( 
\begin{array}{c}
n-[\frac{n+1}{2}] \\ 
\\ 
\lbrack \frac{n+1}{2}]-1
\end{array}
\right) $ $z^{0}.$

\strut

$\qquad \qquad =$ $\left( (-1)^{[\frac{n+1}{2}]}\left( \frac{n+1}{[\frac{n+1%
}{2}]}\right) \left( 
\begin{array}{c}
n-[\frac{n+1}{2}] \\ 
\\ 
\lbrack \frac{n+1}{2}]-1
\end{array}
\right) \right) $ $z^{0}.$

i.e.,

(3.2.20)

\begin{center}
$n$ $\in $ $2\Bbb{N}+1$ $\Longrightarrow $ $\frak{Y}$ $=$ $\left( (-1)^{[%
\frac{n+1}{2}]}\left( \frac{n+1}{[\frac{n+1}{2}]}\right) \left( 
\begin{array}{c}
n-[\frac{n+1}{2}] \\ 
\\ 
\lbrack \frac{n+1}{2}]-1
\end{array}
\right) \right) $ $z^{0}.$
\end{center}

\strut

So, by (3.2.20), if $n$ $\geq $ $2$ is odd in $\Bbb{N},$ then

(3.2.21)

$\hspace{35pt}\hspace{35pt}zf_{n}(z)-f_{n-1}(z)$ $=$ $\sum_{k=0}^{[\frac{n+1}{2}]}(-1)^{k}$ $\frac{n+1%
}{k}$ $\left( 
\begin{array}{c}
n-k \\ 
k-1
\end{array}
\right) $ $z^{n+1-2k},$

if and only if

\begin{center}
$zf_{n}(z)-f_{n-1}(z)$ $=$ $f_{n+1}(z).$
\end{center}

\strut

Therefore, by (3.2.19) and (3.2.21), one can conclude that

(3.2.22)

\begin{center}
$n$ $\geq $ $2$ in $\Bbb{N}$ $\Longrightarrow $ $zf_{n}(z)-f_{n-1}(z)$ $=$ $%
f_{n+1}(z).$
\end{center}

\strut

The above result (3.2.22) shows that

\begin{center}
$f_{n+1}(\varepsilon _{0})$ $=$ $\varepsilon _{0}f_{n}(\varepsilon
_{0})-f_{n-1}(\varepsilon _{0}),$ for $n$ $\geq $ $2$ in $\Bbb{N},$
\end{center}

with

\begin{center}
$f_{1}(\varepsilon _{0})$ $=$ $\varepsilon _{0},$ and $f_{2}(\varepsilon
_{0})$ $=$ $\varepsilon _{0}^{2}-2.$
\end{center}

Recall that, by (3.2.7),

\begin{center}
$\frak{X}_{1}$ $=$ $\varepsilon _{0}$ $I_{2},$ and $\frak{X}_{2}$ $=$ $%
\left( \varepsilon _{0}^{2}-2\right) $ $I_{2},$
\end{center}

and

\begin{center}
$\frak{X}_{n+1}$ $=$ $\varepsilon _{0}\frak{X}_{n}$ $-$ $\frak{X}_{n-1}$ $=$ 
$f_{n}(\varepsilon _{0})$ $I_{2}.$
\end{center}

Therefore, one can conclude that if

\begin{center}
$f_{n}(z)$ $=$ $\sum_{k=0}^{[\frac{n}{2}]}(-1)^{k}\left( \frac{n}{k}\right)
\left( 
\begin{array}{c}
n-k-1 \\ 
k-1
\end{array}
\right) z^{n-2k},$
\end{center}

then

\begin{center}
$\frak{X}_{n}$ $=$ $f_{n}(\varepsilon _{0})$ $I_{2},$ for all $n$ $\in $ $%
\Bbb{N}.$
\end{center}

\strut
\end{proof}

\strut By the recurrence relation (3.2.7), we obtain the above main theorem
of Section 3.

\strut \strut \strut

\textbf{Summary of Section 3.2} If $X_{1}$ and $X_{2}$ are the generating
matrices of the 2-dimensional, order-2, 2-generator subgroup $\frak{M}%
_{2}^{2}(2)$ of $M_{2}(\Bbb{C}),$ then

(3.2.23)

\begin{center}
$(X_{1}X_{2})^{n}+(X_{2}X_{1})^{n}$ $=$ $f_{n}(\varepsilon _{0})$ $I_{2},$
\end{center}

in $M_{2}(\Bbb{C}),$ where

\begin{center}
$f_{n}(z)$ $=$ $\sum_{k=0}^{[\frac{n}{2}]}(-1)^{k}\left( \frac{n}{k}\right)
\left( 
\begin{array}{c}
n-k-1 \\ 
k-1
\end{array}
\right) z^{n-2k},$
\end{center}

and

\begin{center}
$\varepsilon _{0}$ $=$ $\det (X_{1})+\det (X_{2})-\det (X_{1}+X_{2}),$
\end{center}

for all $n$ $\in $ $\Bbb{N}.$ $\blacksquare $

\strut

\strut Let $x_{1}$ and $x_{2}$ be the generators of the group $\Gamma
_{2}^{2}$ of (1.0.1), and let

\begin{center}
$\alpha ^{2}(x_{1})$ $=$ $X_{1}$ and $\alpha ^{2}(x_{2})$ $=$ $X_{2}$
\end{center}

be the corresponding generating matrices of the isomorphic group $\frak{M}%
_{2}^{2}(2)$ of $\Gamma _{2}^{2}$ in $M_{2}(\Bbb{C}).$ The main result
(3.2.23) shows that the generators $x_{1}$ and $x_{2}$ of $\Gamma _{2}^{2}$
induces the analytic data in $M_{2}(\Bbb{C})$ depending on the elements $%
x_{1}x_{2}$ and its inverses $x_{2}x_{1}$ up to (3.2.23).

\subsection{\strut Group Algebras $A_{\alpha ^{n},\text{ }\Gamma _{N}^{2}}$}

Let $\Gamma $ be an arbitrary discrete group, and let $(H,$ $\alpha )$ be a 
\emph{Hilbert-space representation of} $\Gamma $ consisting of a Hilbert
space $H$ and the group-action

\begin{center}
$\alpha $ $:$ $\Gamma $ $\rightarrow $ $L(H),$
\end{center}

making

\begin{center}
$\alpha (g)$ $:$ $H$ $\rightarrow $ $H,$ for all $g$ $\in $ $\Gamma $
\end{center}

be linear operators (or linear transformations) on $H,$ satisfying

\begin{center}
$\alpha (g_{1}g_{2})$ $=$ $\alpha (g_{1})$ $\alpha (g_{2}),$ for all $g_{1},$
$g_{2}$ $\in $ $\Gamma ,$
\end{center}

and

\begin{center}
$\alpha (g^{-1})$ $=$ $\left( \alpha (g)\right) ^{-1},$ for all $g$ $\in $ $%
\Gamma ,$
\end{center}

where $L(H)$ means the operator algebra consisting of all linear operators
on $H.$

Remark that, if we are working with \emph{topologies}, then one may replace $%
L(H)$ to $B(H),$ the operator algebra consisting of all \emph{bounded} (or 
\emph{continuous}) linear operators on $H.$ However, we are not considering
topologies throughout this paper, so we simply set the representational
models for $L(H)$ here.

For example, our group $\Gamma _{N}^{2}$ of (1.0.1) has its Hilbert-space
representations $\left( \Bbb{C}^{n},\text{ }\alpha ^{n}\right) ,$ for all $n$
$\in $ $\Bbb{N}$ $\setminus $ $\{1\},$ and each element $g$ of $\Gamma
_{N}^{2}$ is realized as $\alpha ^{n}(g)$ $\in $ $\frak{M}_{n}^{2}(N)$ in

(3.3.1)

\begin{center}
$M_{n}(\Bbb{C})$ $=$ $L(\Bbb{C}^{n})$ $=$ $B(\Bbb{C}^{n}).$
\end{center}

The first equivalence relation ($=$) of (3.3.1) holds because, under
finite-dimensionality, all linear operators on $\Bbb{C}^{n}$ are matrices
acting on $\Bbb{C}^{n},$ and vice versa. The second equivalence relation ($=$%
) of (3.3.1) holds, because, under finite-dimensionality, all pure-algebraic
operators on $\Bbb{C}^{n}$ are automatically bounded (and hence, continuous).

For an arbitrary group $\Gamma ,$ realized by $(H,$ $\alpha ),$ one obtains
the isomorphic group $\alpha (\Gamma )$ in $L(H),$ i.e.,

\begin{center}
$\Gamma $ $\overset{\text{Group}}{=}$ $\alpha (\Gamma )$ in $L(H).$
\end{center}

So, one can construct a subalgebra,

(3.3.2)

\begin{center}
$A_{\alpha ,\,\Gamma }$ $=$ $\Bbb{C}\left[ \alpha (\Gamma )\right] $ of $%
L(H),$
\end{center}

\strut where $\Bbb{C}[X]$ mean the \emph{polynomial ring} in sets $X.$ Such
rings $\Bbb{C}[X]$ form algebras under polynomial addition and polynomial
multiplication over $\Bbb{C},$ and hence, under the inherited operator
addition and operator multiplication, $\Bbb{C}\left[ \alpha (\Gamma )\right] 
$ forms an algebra in $L(H)$ in (3.3.2).

\begin{definition}
Let $\Gamma $ be a group and let $(H,$ $\alpha )$ be a Hilbert-space
representation of $\Gamma .$ The subalgebra $A_{\alpha ,\,\Gamma }$ of $L(H)$
in the sense of (3.3.2) is called the group algebra of $\Gamma $ induced by $%
(H,$ $\alpha ).$
\end{definition}

It is not difficult to show that if

(3.3.3)

\begin{center}
$A_{\Gamma }^{o}$ $=$ $\Bbb{C}[\Gamma ],$
\end{center}

the pure-algebraic algebra generated by $\Gamma ,$ as a polynomial ring in $%
\Gamma ,$ then

(3.3.4)

\begin{center}
$A_{\Gamma }^{o}$ $\overset{\text{Alg}}{=}$ $A_{\alpha ,\text{ }\Gamma }$,
\end{center}

for all Hilbert-space representations $(H,$ $\alpha )$ of $\Gamma $
algebraically, where ``$\overset{\text{Alg}}{=}$'' means ``being
algebra-isomorphic.''

\begin{proposition}
Let $\Gamma _{N}^{2}$ be the group of (1.0.1), and let $\left( \Bbb{C}^{n},%
\text{ }\alpha ^{n}\right) $ be our $n$-dimensional representations of $%
\Gamma _{N}^{2},$ for all $n$ $\in $ $\Bbb{N}$ $\setminus $ $\{1\}.$ Let $%
\frak{M}_{n}^{2}(N)$ be the $n$-dimensional, order-2, $N$-generator
subgroups of $M_{n}(\Bbb{C}),$ for all $n$ $\in $ $\Bbb{N}$ $\setminus $ $%
\{1\}.$ If $A_{\alpha ^{n},\,\Gamma _{N}^{2}}$ are in the sense of (3.3.2),
and $A_{\Gamma _{N}^{2}}$ is the group algebra in the sense of (3.3.3), then

(3.3.5)

\begin{center}
$A_{\Gamma _{N}^{2}}$ $\overset{\text{Alg}}{=}$ $A_{\alpha ^{n},\text{ }%
\Gamma _{N}^{2}}$ $=$ $\Bbb{C}\left[ \frak{M}_{n}^{2}(N)\right] $, for all $%
n $ $\in $ $\Bbb{N}.$
\end{center}
\end{proposition}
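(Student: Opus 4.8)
The plan is to separate the claim (3.3.5) into its two assertions and dispatch each using the structural results already established. The algebra-isomorphism $A_{\Gamma _{N}^{2}}$ $\overset{\text{Alg}}{=}$ $A_{\alpha ^{n},\,\Gamma _{N}^{2}}$ is exactly the general fact (3.3.4) specialized to $\Gamma $ $=$ $\Gamma _{N}^{2}$ and to the Hilbert-space representation $\left( \Bbb{C}^{n},\text{ }\alpha ^{n}\right) $ furnished by (2.3.6). Since (3.3.4) is asserted for every Hilbert-space representation of a group, it applies verbatim to each $\left( \Bbb{C}^{n},\text{ }\alpha ^{n}\right) ,$ for all $n$ $\in $ $\Bbb{N}$ $\setminus $ $\{1\}.$

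For the equality $A_{\alpha ^{n},\,\Gamma _{N}^{2}}$ $=$ $\Bbb{C}\left[ \frak{M}_{n}^{2}(N)\right] ,$ I would argue straight from the definitions. By (3.3.2) one has $A_{\alpha ^{n},\,\Gamma _{N}^{2}}$ $=$ $\Bbb{C}\left[ \alpha ^{n}(\Gamma _{N}^{2})\right] ,$ so it suffices to identify the image $\alpha ^{n}(\Gamma _{N}^{2}).$ By the structure theorem (2.3.4), $\alpha ^{n}$ is a group-isomorphism of $\Gamma _{N}^{2}$ onto $\frak{M}_{n}^{2}(N);$ in particular it is onto, whence $\alpha ^{n}(\Gamma _{N}^{2})$ $=$ $\frak{M}_{n}^{2}(N)$ as subgroups of $M_{n}(\Bbb{C}).$ Substituting this into the polynomial-ring construction yields $\Bbb{C}\left[ \alpha ^{n}(\Gamma _{N}^{2})\right] $ $=$ $\Bbb{C}\left[ \frak{M}_{n}^{2}(N)\right] $ as subalgebras of $M_{n}(\Bbb{C}),$ which is the asserted equality.

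To make the first assertion self-contained rather than merely citing (3.3.4), I would exhibit the isomorphism by hand. Define $\Phi :A_{\Gamma _{N}^{2}}\rightarrow A_{\alpha ^{n},\,\Gamma _{N}^{2}}$ to be the $\Bbb{C}$-linear extension of $\alpha ^{n},$ so that $\Phi $ sends a formal combination $\sum_{g}c_{g}\,g$ to $\sum_{g}c_{g}\,\alpha ^{n}(g).$ Multiplicativity of $\Phi $ is immediate from $\alpha ^{n}$ being a group-homomorphism, since on group elements $\Phi (g_{1}g_{2})$ $=$ $\alpha ^{n}(g_{1}g_{2})$ $=$ $\alpha ^{n}(g_{1})\,\alpha ^{n}(g_{2})$ $=$ $\Phi (g_{1})\,\Phi (g_{2}),$ and both multiplications extend bilinearly. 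Surjectivity is clear, as the generators $\alpha ^{n}(x_{j})$ $=$ $X_{j}$ of the target already lie in the range, by (2.3.5).

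The step I expect to be the main obstacle is the injectivity of $\Phi ,$ equivalently the linear independence of the family $\{\alpha ^{n}(g):g\in \Gamma _{N}^{2}\}$ inside the finite-dimensional algebra $M_{n}(\Bbb{C}).$ Mere distinctness of the matrices $\alpha ^{n}(g)$ (which $\alpha ^{n}$ being a group-isomorphism guarantees) does not by itself force linear independence, so this is precisely where care is needed and where the structure of $\Gamma _{N}^{2}$ must be invoked. I would reduce injectivity to the content already packaged inside (3.3.4), checking that the generator-preserving isomorphism $\alpha ^{n}$ of (2.3.5) meets its hypotheses; alternatively, one may trace a vanishing combination $\sum_{g}c_{g}\,\alpha ^{n}(g)$ $=$ $0$ back through the normal-form description of words in $\Gamma _{N}^{2}$ (for instance (3.1.5) when $N$ $=$ $2$) in an attempt to force every $c_{g}$ $=$ $0.$
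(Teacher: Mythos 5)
Your first two paragraphs reproduce the paper's own proof: the paper likewise disposes of (3.3.5) by invoking the general relation (3.3.4) for the representation $\left( \Bbb{C}^{n},\text{ }\alpha ^{n}\right) $, and by observing that $\alpha ^{n}\left( \Gamma _{N}^{2}\right) $ $=$ $\frak{M}_{n}^{2}(N)$, whence $\Bbb{C}\left[ \alpha ^{n}(\Gamma _{N}^{2})\right] $ $=$ $\Bbb{C}\left[ \frak{M}_{n}^{2}(N)\right] .$ That second equality is genuinely correct, and your derivation of it is fine.

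The obstacle you flagged in your last paragraph, however, is not merely a step needing care: it is a step that fails, and neither of your proposed rescues can close it. For $N$ $\geq $ $2$ the group $\Gamma _{N}^{2}$ is infinite (by the normal form (3.1.5), the words $(x_{1}x_{2})^{n},$ $n$ $\in $ $\Bbb{N},$ are pairwise distinct), so the group algebra $A_{\Gamma _{N}^{2}}$ $=$ $\Bbb{C}[\Gamma _{N}^{2}]$ is infinite-dimensional, while $A_{\alpha ^{n},\,\Gamma _{N}^{2}}$ $\subseteq $ $M_{n}(\Bbb{C})$ has dimension at most $n^{2}.$ Hence the linear extension $\Phi $ of $\alpha ^{n}$ is surjective but can never be injective, and no algebra-isomorphism $A_{\Gamma _{N}^{2}}$ $\overset{\text{Alg}}{=}$ $A_{\alpha ^{n},\text{ }\Gamma _{N}^{2}}$ exists; the first equality of (3.3.5), and the general claim (3.3.4) on which both you and the paper rest it, are false as stated (the paper never proves (3.3.4); it is asserted with ``it is not difficult to show''). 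Indeed the paper itself later exhibits an explicit element of $\ker \Phi $: by (3.2.5), $X_{1}X_{2}+X_{2}X_{1}$ $=$ $\varepsilon _{0}I_{2},$ i.e. $\Phi \left( x_{1}x_{2}+x_{2}x_{1}-\varepsilon _{0}e\right) $ $=$ $0,$ even though $x_{1}x_{2},$ $x_{2}x_{1}$ and $e$ are three distinct group elements, so that $x_{1}x_{2}+x_{2}x_{1}-\varepsilon _{0}e$ $\neq $ $0$ in $\Bbb{C}[\Gamma _{2}^{2}].$ This same relation shows your normal-form fallback cannot force all coefficients $c_{g}$ to vanish. What is actually provable, and what your map $\Phi $ does establish, is that $A_{\Gamma _{N}^{2}}$ maps onto $A_{\alpha ^{n},\,\Gamma _{N}^{2}}$ $=$ $\Bbb{C}\left[ \frak{M}_{n}^{2}(N)\right] $ as an algebra quotient; so the symbol ``$\overset{\text{Alg}}{=}$'' in (3.3.5) should be weakened to a surjective algebra homomorphism (or the statement restricted to representations, such as the left regular one, whose linear extension is injective).
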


\begin{proof}
The proof of (3.3.5) is from the general relation (3.3.4). Recall that, by
the group-isomorphism $\alpha ^{n},$

$\alpha ^{n}\left( \Gamma _{2}^{2}\right) $ $=$ $\frak{M}_{n}^{2}(N),$ for
all $n$ $\in $ $\Bbb{N}.$

\strut
\end{proof}

\subsection{\strut A Group Algebra $A_{\alpha ^{2},\text{ }\Gamma _{2}^{2}}$}

In this section, we take the group algebra in the sense of (3.3.2),

(3.4.1)

\begin{center}
$A_{\alpha ^{2},\Gamma _{2}^{2}}$ $\overset{denote}{=}$ $\mathcal{A}_{2}$,
\end{center}

as a subalgebra of $M_{2}(\Bbb{C})$ $=$ $L(\Bbb{C}^{2})$, under our
representation $\left( \Bbb{C}^{2},\text{ }\alpha ^{2}\right) .$ \strut By
(3.3.5), pure-algebraically, the algebra $\mathcal{A}_{2}$ of (3.4.1) is
algebra-isomorphic to the group algebra $A_{\Gamma _{2}^{2}}$ $=$ $\Bbb{C}%
[\Gamma _{2}^{2}].$

By (3.1.5), all elements of $\mathcal{A}_{2}$ are the linear combinations of
the matrices formed by

(3.4.2)

\begin{center}
$I_{2},$ $X_{1},$ $X_{2},$ $(X_{1}X_{2})^{n},$ $(X_{2}X_{1})^{n},$
\end{center}

or

\begin{center}
$\left( X_{1}X_{2}\right) ^{n}X_{1},$ $\left( X_{2}X_{1}\right) ^{n}X_{2},$
\end{center}

for all $n$ $\in $ $\Bbb{N},$ where $X_{j}$ $=$ $\alpha ^{2}(x_{j})$, for $j$
$=$ $1,$ $2,$ since

\begin{center}
$\mathcal{A}_{2}$ $=$ $\Bbb{C}\left[ \frak{M}_{2}^{2}(2)\right] $ in $M_{2}(%
\Bbb{C}).$
\end{center}

As we have seen in Section 3.1, the building blocks of $\mathcal{A}_{2}$ in
the sense of (3.4.2) satisfy that: (i) $I_{2},$ $X_{1},$ $X_{2},$ $%
(X_{1}X_{2})^{n}X_{1},$ and $(X_{2}X_{1})^{n}X_{2}$ are self-invertible in $%
\mathcal{A}_{2}$ $\subset $ $M_{2}(\Bbb{C}),$ and (ii)

\begin{center}
$\left( (X_{1}X_{2})^{n}\right) ^{-1}$ $=$ $\left( X_{2}X_{1}\right) ^{n},$
\end{center}

for all $n$ $\in $ $\Bbb{N}.$

\begin{corollary}
Let $X_{1}$ and $X_{2}$ be the generating matrices of $\frak{M}_{2}^{2}(2),$
and hence, those of $\mathcal{A}_{2}.$ Then

(3.4.3)

\begin{center}
$\left( (X_{1}X_{2})^{n}\right) ^{-1}$ $=$ $f_{n}(\varepsilon _{0})$ $%
I_{2}-(X_{1}X_{2})^{n},$
\end{center}

and

\begin{center}
$\left( (X_{2}X_{1})\right) ^{-1}$ $=$ $f_{n}(\varepsilon _{0})$ $I_{2}$ $-$ 
$\left( X_{2}X_{1}\right) ^{n}$
\end{center}

for all $n$ $\in $ $\Bbb{N},$ where $(f_{n})_{n=2}^{\infty }$ is the
functional sequence in the sense of (3.2.12).
\end{corollary}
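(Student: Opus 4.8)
The plan is to deduce the formula by combining two results already in hand: the group-theoretic identification of the inverse of $(x_1 x_2)^n$ with $(x_2 x_1)^n$, and the summation identity (3.2.23) expressing $(X_1 X_2)^n + (X_2 X_1)^n$ as a scalar multiple of $I_2$. Neither ingredient requires fresh computation, so the proof is essentially an assembly of previously established facts.

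First I would invoke the group-isomorphism $\alpha^2 : \Gamma_2^2 \rightarrow \frak{M}_2^2(2)$ of (2.1.8). Because $\alpha^2$ is a group homomorphism, it intertwines inverses, i.e. $\alpha^2(g^{-1}) = \left(\alpha^2(g)\right)^{-1}$ for every $g \in \Gamma_2^2$. Taking $g = (x_1 x_2)^n$ and recalling the inverse relation (3.1.8), namely $\left((x_1 x_2)^n\right)^{-1} = (x_2 x_1)^n$, and applying $\alpha^2$ with $\alpha^2(x_1) = X_1$ and $\alpha^2(x_2) = X_2$, I obtain $\left((X_1 X_2)^n\right)^{-1} = (X_2 X_1)^n$ inside $\frak{M}_2^2(2) \subset M_2(\Bbb{C})$.

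Next I would substitute the summation identity. By (3.2.23), $(X_1 X_2)^n + (X_2 X_1)^n = f_n(\varepsilon_0)\, I_2$, hence $(X_2 X_1)^n = f_n(\varepsilon_0)\, I_2 - (X_1 X_2)^n$. Chaining this with the inverse identity from the previous step yields $\left((X_1 X_2)^n\right)^{-1} = f_n(\varepsilon_0)\, I_2 - (X_1 X_2)^n$, which is the first line of (3.4.3). The second line follows by the symmetric argument: from (3.1.9) one has $\left((x_2 x_1)^n\right)^{-1} = (x_1 x_2)^n$, so under $\alpha^2$ we get $\left((X_2 X_1)^n\right)^{-1} = (X_1 X_2)^n = f_n(\varepsilon_0)\, I_2 - (X_2 X_1)^n$.

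There is no genuine obstacle here; the content is carried entirely by (3.1.8)--(3.1.9) together with (3.2.23). The only point demanding slight care is the indexing convention for the functional sequence $(f_n)_{n=2}^{\infty}$ cited in the statement versus the range $n \in \Bbb{N}$ asserted in the conclusion: I would verify directly that the explicit values $f_1$ and $f_2$ from (3.2.13) reproduce the correct inverses at $n = 1, 2$, so that the displayed formula is indeed valid for all $n \in \Bbb{N}$ rather than only for $n \geq 2$.
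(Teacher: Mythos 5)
Your proposal is correct and follows essentially the same route as the paper: the paper's proof likewise rearranges (3.2.23) to get $(X_{2}X_{1})^{n} = f_{n}(\varepsilon _{0})\,I_{2} - (X_{1}X_{2})^{n}$ and combines it with the identification $\left((X_{1}X_{2})^{n}\right)^{-1} = (X_{2}X_{1})^{n}$ (stated just before the corollary as consequence (ii) of Section 3.1), which you merely make explicit via $\alpha^{2}$ and (3.1.8)--(3.1.9). Your closing remark about the indexing of $(f_{n})$ is a sensible check but does not change the argument.
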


\begin{proof}
\strut By (3.2.23), we have

\begin{center}
$(X_{1}X_{2})^{n}+(X_{2}X_{1})^{n}$ $=$ $f_{n}(\varepsilon _{0})$ $I_{2}$ in 
$\mathcal{A}_{2},$
\end{center}

where $f_{n}$ are in the sense of (3.2.12), and $\varepsilon _{0}$ is in the
sense of (3.2.5), for all $n$ $\in $ $\Bbb{N}$. So,

\begin{center}
$(X_{2}X_{1})^{n}$ $=$ $f_{n}(\varepsilon _{0})I_{2}$ $-$ $\left(
X_{1}X_{2}\right) ^{n}$ in $\mathcal{A}_{2},$
\end{center}

for all $n$ $\in $ $\Bbb{N}.$ Thus,

\begin{center}
$\left( (X_{1}X_{2})^{n}\right) ^{-1}$ $=$ $f_{n}(\varepsilon _{0})$ $I_{2}$ 
$-$ $\left( X_{1}X_{2}\right) ^{n},$
\end{center}

for all $n$ $\in $ $\Bbb{N}.$

Similarly, we obtain

\begin{center}
$\left( (X_{2}X_{1})^{n}\right) ^{-1}$ $=$ $f_{n}(\varepsilon _{0})$ $I_{2}$ 
$-$ $(X_{2}X_{1})^{n},$
\end{center}

for all $n$ $\in $ $\Bbb{N}.$
\end{proof}

\strut It means that if $T$ is a matrix of $\mathcal{A}_{2}$ in the form of
either $(X_{1}X_{2})^{n}$ or $(X_{2}X_{1})^{n},$ for any $n$ $\in $ $\Bbb{N}%
, $ then

\begin{center}
$T^{-1}$ $=$ $f_{n}(\varepsilon _{0})$ $I_{2}$ $-T.$
\end{center}

\begin{proposition}
Let $A$ $\in $ $\frak{M}_{2}^{2}(2)$ in $\mathcal{A}_{2}.$ Then

(3.4.4)

\begin{center}
$A^{-1}$ $=$ $\left\{ 
\begin{array}{lll}
A &  & \text{or} \\ 
r_{A}\text{ }I_{2}-A, &  & 
\end{array}
\right. $
\end{center}

for some $r_{A}$ $\in $ $\Bbb{C}.$
\end{proposition}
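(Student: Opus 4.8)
The plan is to leverage the group-isomorphism $\alpha ^{2}:\Gamma _{2}^{2}\rightarrow \frak{M}_{2}^{2}(2)$ of (2.1.8), together with the structural classification already established. Since every $A$ $\in $ $\frak{M}_{2}^{2}(2)$ is of the form $A$ $=$ $\alpha ^{2}(g)$ for a \emph{unique} $g$ $\in $ $\Gamma _{2}^{2},$ and since $\alpha ^{2}$ preserves inverses, we have $A^{-1}$ $=$ $\alpha ^{2}(g^{-1}).$ Thus the behavior of $A^{-1}$ is governed entirely by that of $g^{-1}$ in $\Gamma _{2}^{2},$ and I would split the argument according to the form of $g$ given by Proposition (3.1.5), namely $g$ is exactly one of $e,$ $x_{1},$ $x_{2},$ $(x_{1}x_{2})^{n}x_{1},$ $(x_{2}x_{1})^{n}x_{2},$ $(x_{1}x_{2})^{n},$ or $(x_{2}x_{1})^{n}.$

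\textbf{Case 1 (self-invertible $g$).} Suppose $g$ $=$ $e,$ or $\left| g\right| $ is odd, i.e., $g$ $\in $ $\{x_{1},$ $x_{2},$ $(x_{1}x_{2})^{n}x_{1},$ $(x_{2}x_{1})^{n}x_{2}\}.$ By (3.1.10) together with the order-2 relations (3.1.3), every such $g$ satisfies $g^{-1}$ $=$ $g.$ Applying $\alpha ^{2}$ then gives $A^{-1}$ $=$ $\alpha ^{2}(g^{-1})$ $=$ $\alpha ^{2}(g)$ $=$ $A,$ which is the first branch of (3.4.4).

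\textbf{Case 2 (even-length nontrivial $g$).} Suppose $g$ $=$ $(x_{1}x_{2})^{n}$ or $g$ $=$ $(x_{2}x_{1})^{n}$ for some $n$ $\in $ $\Bbb{N}.$ Then $A$ $=$ $(X_{1}X_{2})^{n}$ or $A$ $=$ $(X_{2}X_{1})^{n},$ respectively, and Corollary (3.4.3) yields directly $A^{-1}$ $=$ $f_{n}(\varepsilon _{0})$ $I_{2}$ $-$ $A.$ Setting $r_{A}$ $=$ $f_{n}(\varepsilon _{0})$ $\in $ $\Bbb{C}$ produces the second branch of (3.4.4), where $\varepsilon _{0}$ is in the sense of (3.2.5).

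I do not expect any genuine obstacle here: all the substantive content has already been packaged into the classification (3.1.5), the length-parity dichotomy (3.1.10)--(3.1.11), and the explicit inverse formula (3.4.3). The only points requiring minor care are the bookkeeping of the identity $e$ (which has even length $0$ but is nevertheless self-invertible, and so belongs to Case 1 rather than Case 2), and the verification that the two cases are exhaustive and that $A$ determines its preimage $g$ uniquely; both follow at once from the bijectivity of $\alpha ^{2}$ and from (3.1.5).
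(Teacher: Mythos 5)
Your proof is correct and takes essentially the same route as the paper's: both split according to the classification of elements of $\frak{M}_{2}^{2}(2)$ into the self-invertible words ($I_{2},$ $X_{1},$ $X_{2},$ $(X_{1}X_{2})^{n}X_{1},$ $(X_{2}X_{1})^{n}X_{2}$) and the even-length words $(X_{1}X_{2})^{n},$ $(X_{2}X_{1})^{n},$ handling the latter by Corollary (3.4.3) with $r_{A}$ $=$ $f_{n}(\varepsilon _{0}).$ Your explicit detour through $\Gamma _{2}^{2}$ via $\alpha ^{2}$ and (3.1.5), (3.1.10) merely spells out what the paper's phrase ``as we discussed above'' refers to, since the paper's list (3.4.2) and its self-invertibility facts were themselves obtained from Section 3.1 by the same transport.
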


\begin{proof}
\strut If $A$ $\in $ $\frak{M}_{2}^{2}(2)$ in $\mathcal{A}_{2},$ then $T$ is
one of the forms of (3.4.2). As we discussed above, if $A$ is $I_{2},$ or $%
X_{1},$ or $X_{2},$ or $(X_{1}X_{2})^{n}X_{1},$ or $(X_{2}X_{1})^{n}X_{2},$
for $n$ $\in $ $\Bbb{N},$ then it is self-invertible in $\mathcal{A}_{2}.$
i.e.,

\begin{center}
$A^{-1}$ $=$ $A$ in $\mathcal{A}_{2}.$
\end{center}

If $A$ is either $\left( X_{1}X_{2}\right) ^{n},$ or $(X_{2}X_{1})^{n},$ for
any $n$ $\in $ $\Bbb{N},$ then, by (3.4.3), there exists $r_{A}$ $\in $ $%
\Bbb{C},$ such that

\begin{center}
$A^{-1}$ $=$ $r_{A}-A$ in $\mathcal{A}_{2}\vdots .$
\end{center}

\strut \strut
\end{proof}

The above proposition characterizes the invertibility of $\frak{M}%
_{2}^{2}(2) $ ``in $\mathcal{A}_{2}.$''

\subsection{\strut Trace of Certain Matrices of $\mathcal{A}_{2}$}

\strut Throughout this section, we will use the same notations used before.
Let $M_{n}(\Bbb{C})$ be the $(n\times n)$-matricial algebra, for $n$ $\in $ $%
\Bbb{N}.$ Then the \emph{trace }$tr$\emph{\ on} $M_{n}(\Bbb{C})$ is
well-defined as a linear functional on $M_{n}(\Bbb{C})$ by

\begin{center}
$tr\left( 
\begin{array}{cccc}
a_{11} & a_{12} & \cdots & a_{1n} \\ 
a_{21} & a_{22} & \cdots & a_{2n} \\ 
\vdots & \vdots & \ddots & \vdots \\ 
a_{n1} & a_{n2} & \cdots & a_{nn}
\end{array}
\right) $ $=$ $\sum_{j=1}^{n}$ $a_{jj}.$
\end{center}

For instance, if $n$ $=$ $2,$ then

\begin{center}
$tr\left( 
\begin{array}{cc}
a & b \\ 
c & d
\end{array}
\right) $ $=$ $a+d.$
\end{center}

Now, let $\mathcal{A}_{2}$ be the group algebra of $\Gamma _{2}^{2}$ induced
by $\left( \Bbb{C}^{2},\text{ }\alpha ^{2}\right) $ in $M_{2}(\Bbb{C}),$ in
the sense of (3.4.1)$.$ Since the algebra $\mathcal{A}_{2}$ is a subalgebra
of $M_{2}(\Bbb{C}),$ one can naturally restrict the trace $tr$ on $M_{2}(%
\Bbb{C})$ to that on $\mathcal{A}_{2}.$ \strut i.e., the pair $\left( 
\mathcal{A}_{2},\text{ }tr\right) $ forms a \emph{noncommutative free
probability space} in the sense of [8] and [9].

In this section, we are interested in trace of certain types of matrices in $%
\mathcal{A}_{2}.$ Let $X_{1}$ and $X_{2}$ be the generating matrices of $%
\frak{M}_{2}^{2}(2),$ and hence, those of $\mathcal{A}_{2}.$ Define a new
element $T$ of $\mathcal{A}_{2}$ by

(3.5.1)

\begin{center}
$T$ $=$ $X_{1}+X_{2}$ $\in $ $\mathcal{A}_{2}.$
\end{center}

By the self-invertibility of $X_{1}$ and $X_{2},$ this matrix is understood
as the \emph{radial operator} (e.g., [5]) \emph{of} $\mathcal{A}_{2}.$

Observe that

$\qquad T^{2}$ $=$ $\left( X_{1}+X_{2}\right) ^{2}$ $=$ $X_{1}^{2}+\left(
X_{1}X_{2}+X_{2}X_{1}\right) +X_{2}^{2}$

$\qquad \qquad =$ $I_{2}+f_{1}(\varepsilon _{0})$ $I_{2}$ $+$ $I_{2}$ $=$ $%
\left( f_{1}(\varepsilon _{0})+2\right) $ $I_{2},$

and

$\qquad T^{3}$ $=$ $\left( X_{1}+X_{2}\right) ^{3}$ $=$ $\left(
X_{1}+X_{2}\right) ^{2}(X_{1}+X_{2})$

$\qquad \qquad =$ $\left( f_{1}(\varepsilon _{0})+2\right) T,$

and

\begin{center}
$T^{4}$ $=$ $T^{3}T$ $=$ $\left( f_{1}(\varepsilon _{0})+2\right) T^{2}$ $=$ 
$\left( f_{1}(\varepsilon _{0})+2\right) ^{2}I_{2},$
\end{center}

and

\begin{center}
$T^{5}$ $=$ $T^{4}T$ $=$ $\left( f_{1}(\varepsilon _{0})+2\right) ^{2}T,$
\end{center}

and

\begin{center}
$T^{6}$ $=$ $T^{5}T$ $=$ $\left( f_{1}(\varepsilon _{0})+2\right) ^{2}T^{2}$ 
$=$ $\left( f_{1}(\varepsilon _{0})+2\right) ^{3}I_{2},$
\end{center}

etc, where $f_{n}$ are in the sense of (3.2.12),

\begin{center}
$f_{n}(z)$ $=$ $\sum_{k=0}^{[\frac{n}{2}]}$ $(-1)^{k}\left( \frac{n}{k}%
\right) \left( 
\begin{array}{c}
n-k-1 \\ 
k-1
\end{array}
\right) z^{n-2k},$
\end{center}

for all $n$ $\in $ $\Bbb{N},$ and hence,

\begin{center}
$f_{1}(\varepsilon _{0})$ $=$ $\varepsilon _{0}.$
\end{center}

Inductively, one obtains that:

\begin{theorem}
Let $T$ $=$ $X_{1}+X_{2}$ be the radial operator of $\mathcal{A}_{2},$ where 
$X_{1}$ and $X_{2}$ are the generating matrices of $\mathcal{A}_{2}.$ Then

(3.5.2)

\begin{center}
$T^{2n}$ $=$ $\left( \varepsilon _{0}+2\right) ^{n}I_{2},$ and $T^{2n+1}$ $=$
$\left( \varepsilon _{0}+2\right) ^{n}T,$
\end{center}

in $\mathcal{A}_{2},$ for all $n$ $\in $ $\Bbb{N},$ where

\begin{center}
$\varepsilon _{0}$ $=$ $\det (X_{1})+\det (X_{2})-\det (X_{1}+X_{2}).$
\end{center}
\end{theorem}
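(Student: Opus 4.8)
The plan is to reduce everything to a single computation of $T^{2}$ and then exploit the fact that $T^{2}$ turns out to be a scalar multiple of the identity, which is central in $M_{2}(\Bbb{C})$ and therefore makes all the remaining powers collapse immediately. First I would expand
\[
T^{2} = (X_{1}+X_{2})^{2} = X_{1}^{2} + (X_{1}X_{2}+X_{2}X_{1}) + X_{2}^{2}.
\]
Since $X_{1}$ and $X_{2}$ are order-2 generators of $\frak{M}_{2}^{2}(2)$, the order-2 property (2.1.2) gives $X_{1}^{2} = X_{2}^{2} = I_{2}$, while the anticommutator relation (3.2.5) gives $X_{1}X_{2}+X_{2}X_{1} = \varepsilon_{0}I_{2}$, with $\varepsilon_{0} = \det(X_{1})+\det(X_{2})-\det(X_{1}+X_{2})$. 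Substituting these three identities yields $T^{2} = (\varepsilon_{0}+2)I_{2}$, which is exactly the scalar form I want.

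Next I would observe that because $T^{2} = (\varepsilon_{0}+2)I_{2}$ is a scalar matrix, it commutes with $T$ and with every power of $T$, so the induction is essentially free. For the even powers, $T^{2n} = (T^{2})^{n} = (\varepsilon_{0}+2)^{n}I_{2}$; for the odd powers, $T^{2n+1} = T^{2n}\,T = (\varepsilon_{0}+2)^{n}T$. Formally I would run a short induction on $n$: the base cases $T^{2} = (\varepsilon_{0}+2)I_{2}$ and $T^{3} = T^{2}T = (\varepsilon_{0}+2)T$ are immediate, and the inductive step multiplies the previous relation by $T^{2} = (\varepsilon_{0}+2)I_{2}$, advancing the even and odd cases simultaneously. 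This matches the explicit pattern already recorded for $T^{2}$ through $T^{6}$ in the text preceding the statement.

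There is no genuine obstacle here; the entire content is the observation that the order-2 relations force the cross terms in $T^{2}$ to combine into a single scalar via (3.2.5), after which $T$ behaves like a square root of the scalar $(\varepsilon_{0}+2)I_{2}$. The only point requiring care is bookkeeping on the exponent: one must confirm that the scalar $(\varepsilon_{0}+2)$ is raised to the power $n$ rather than $2n$ in $T^{2n}$, which is precisely what the identity $(T^{2})^{n} = (\varepsilon_{0}+2)^{n}I_{2}$ guarantees. Once $T^{2}$ is pinned down as a central scalar, both formulas in (3.5.2) follow with no further computation.
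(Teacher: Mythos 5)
Your proposal is correct and follows essentially the same route as the paper: both expand $T^{2}=X_{1}^{2}+(X_{1}X_{2}+X_{2}X_{1})+X_{2}^{2}$, apply the order-2 identities and the anticommutator relation (3.2.5) to get $T^{2}=(\varepsilon_{0}+2)I_{2}$, and then obtain the higher powers inductively. Your remark that the scalar (hence central) nature of $T^{2}$ makes the induction collapse to $(T^{2})^{n}$ is just a slightly cleaner packaging of the paper's step-by-step computation of $T^{3}$ through $T^{6}$ followed by induction.
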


\begin{proof}
The proof of (3.5.2) is done inductively by the observations in the very
above paragraphs. Indeed, one can get that:

\begin{center}
$T^{2n}$ $=$ $\left( f_{1}(\varepsilon _{0})+2\right) ^{n}I_{2},$
\end{center}

and

\begin{center}
$T^{2n+1}=\left( f_{1}(\varepsilon _{0})+2\right) ^{n}T,$
\end{center}

in $\mathcal{A}_{2},$ for all $n$ $\in $ $\Bbb{N}.$ And, since $f_{1}(z)$ $=$
$z,$ we obtain

\begin{center}
$f_{1}(\varepsilon _{0})$ $=$ $\varepsilon _{0},$
\end{center}

where $\varepsilon _{0}$ is in the sense of (3.2.5).
\end{proof}

By (3.5.2), we obtain the following data.

\begin{corollary}
Let $T$ $=$ $X_{1}+X_{2}$ be the radial operator of $\mathcal{A}_{2}.$ Then

(3.5.3)

\begin{center}
$tr\left( T^{n}\right) $ $=$ $\left\{ 
\begin{array}{ll}
2\left( \varepsilon _{0}+2\right) ^{n/2} & \text{if }n\text{ is even} \\ 
0 & \text{if }n\text{ is odd}
\end{array}
\right. $
\end{center}

for all $n$ $\in $ $\Bbb{N}.$
\end{corollary}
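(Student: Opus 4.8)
The plan is to apply the trace functional $tr$ directly to the two closed forms supplied by Theorem (3.5.2), using nothing beyond linearity of $tr$ together with two scalar facts: $tr(I_2) = 2$ and $tr(T) = 0$. The first is immediate from the definition of $tr$. The second does not follow from (3.5.2) itself, since those formulas describe the powers $T^{2n}$ and $T^{2n+1}$ rather than $T$ alone, so I would establish it first from the explicit form (2.1.1): each generating matrix $A_2(a,b)$ has diagonal entries $a$ and $-a$, hence $tr(X_1) = tr(X_2) = 0$, whence $tr(T) = tr(X_1+X_2) = 0$ by linearity.

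For even $n$, writing $n = 2m$, Theorem (3.5.2) gives $T^n = (\varepsilon_0+2)^m I_2$. Applying $tr$ and using $tr(I_2) = 2$ yields $tr(T^n) = 2(\varepsilon_0+2)^m = 2(\varepsilon_0+2)^{n/2}$, which is exactly the claimed even-case value. This step is a one-line substitution once the scalar $(\varepsilon_0+2)^m$ is pulled out of the trace by linearity.

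For odd $n$, I would first note that $n=1$ lies outside the range of (3.5.2) as stated (there $T^{2n+1}$ covers only $n\geq 1$, i.e.\ exponents $\geq 3$), and dispatch it separately: $tr(T^1) = tr(T) = 0$ by the computation above. For odd $n \geq 3$, write $n = 2m+1$ with $m \geq 1$; then Theorem (3.5.2) gives $T^n = (\varepsilon_0+2)^m T$, and linearity together with $tr(T)=0$ forces $tr(T^n) = (\varepsilon_0+2)^m\, tr(T) = 0$.

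There is no genuine obstacle here, as the entire arithmetic content is carried by Theorem (3.5.2). The only two points requiring care are bookkeeping rather than difficulty: that $tr(T)=0$ must be read off from the matrix form (2.1.1) rather than from the power formulas, and that the single exponent $n=1$ must be verified by hand since it is not covered by the range in (3.5.2).
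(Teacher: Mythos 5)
Your proposal is correct and follows essentially the same route as the paper's own proof: both reduce everything to Theorem (3.5.2), evaluate $tr(T^{2m})=2(\varepsilon_{0}+2)^{m}$ by linearity, and kill the odd powers via $tr(T)=0$ read off from the explicit matrix form (the paper computes the sum matrix (3.5.6) with diagonal entries $t_{1}+t_{2}$ and $-(t_{1}+t_{2})$, while you equivalently note $tr(X_{1})=tr(X_{2})=0$ from (2.1.1)). Your separate treatment of the exponent $n=1$ matches what the paper does implicitly when it verifies $tr(T)=0$ directly before concluding $tr(T^{2n-1})=0$ for all $n$.
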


\begin{proof}
Let's set

(3.5.4)

\begin{center}
$X_{j}$ $=$ $\left( 
\begin{array}{cc}
t_{j} & s_{j} \\ 
\frac{1-t_{j}^{2}}{s_{j}} & -t_{j}
\end{array}
\right) $ in $\mathcal{A}_{2},$ for $j$ $=$ $1,$ $2,$
\end{center}

\strut

where $(t_{1},$ $s_{1})$ and $(t_{2},$ $s_{2})$ are strongly distinct pair
in $\Bbb{C}^{\times }\times \Bbb{C}^{\times }.$

By (3.5.2), if $T$ is given as above in $\mathcal{A}_{2},$ then

\begin{center}
$T^{2n}$ $=$ $\left( \varepsilon _{0}+2\right) ^{n}$ $I_{2},$ and $T^{2n+1}$ 
$=$ $\left( \varepsilon _{0}+2\right) ^{n}T,$
\end{center}

for all $n$ $\in $ $\Bbb{N}.$

Thus, it is not difficult to check that

$\qquad tr\left( T^{2n}\right) $ $=$ $tr\left( \left( 
\begin{array}{cc}
(\varepsilon _{0}+2)^{n} & 0 \\ 
0 & \left( \varepsilon _{0}+2\right) ^{n}
\end{array}
\right) \right) $

$\strut $

$\qquad \qquad \quad =$ $\left( \varepsilon _{0}+2\right) ^{n}+\left(
\varepsilon _{0}+2\right) ^{n}$ $=$ $2\left( \varepsilon _{0}+2\right) ^{n},$

for all $n$ $\in $ $\Bbb{N},$ i.e.,

(3.5.5)

\begin{center}
$tr(T^{2n})$ $=$ $2\left( \varepsilon _{0}+2\right) ^{n},$ for all $n$ $\in $
$\Bbb{N}.$
\end{center}

\strut

By the direct computation and by (3.5.4), one has

(3.5.6)

\begin{center}
$T$ $=$ $\left( 
\begin{array}{ccc}
t_{1}+t_{2} &  & s_{1}+s_{2} \\ 
&  &  \\ 
\frac{1-t_{1}^{2}}{s_{1}}+\frac{1-t_{2}^{2}}{s_{2}} &  & -(t_{1}+t_{2})
\end{array}
\right) $
\end{center}

in $\mathcal{A}_{2}.$ So, one can get that:

\begin{center}
$tr(T)$ $=$ $\left( t_{1}+t_{2}\right) +\left( -(t_{1}+t_{2})\right) $ $=$ $%
0,$
\end{center}

by (3.5.6).

Also, again by (3.5.2), we have

\begin{center}
$
\begin{array}{ll}
tr(T^{2n+1}) & =tr\left( (\varepsilon _{0}+2)^{n}T\right) \\ 
& =\left( \varepsilon _{0}+2\right) ^{n}tr(T)=0,
\end{array}
$
\end{center}

for all $n$ $\in $ $\Bbb{N}.$ i.e.,

\begin{center}
$tr(T)$ $=$ $0$ $=$ $tr\left( T^{2n+1}\right) ,$ for all $n$ $\in $ $\Bbb{N}%
. $
\end{center}

In summary, we have

(3.5.7)

\begin{center}
$tr\left( T^{2n-1}\right) $ $=$ $0,$ for all $n$ $\in $ $\Bbb{N}.$
\end{center}

\strut

Therefore, by (3.5.5) and (3.5.7), we obtain the formula (3.5.3).
\end{proof}

\section{Application: \strut $\mathcal{A}_{2}$ and Lucas Numbers}

In this section, we will use the same notations used in Section 3 above. For
instance, let

\begin{center}
$\mathcal{A}_{2}$ $=$ $\Bbb{C}\left[ \alpha ^{2}(\Gamma _{2}^{2})\right] $ $%
= $ $\Bbb{C}\left[ \frak{M}_{2}^{2}(2)\right] $
\end{center}

is the group algebra of $\Gamma _{2}^{2}$ induced by our 2-dimensional
representation $\left( \Bbb{C}^{2},\text{ }\alpha ^{2}\right) $ in $M_{2}(%
\Bbb{C}).$

Here, as application of Sections 2 and 3, we study connections between
analytic data obtained from elements of $\mathcal{A}_{2}$ and \emph{Lucas
numbers}.

For more details about Lucas numbers, see [7], [8], [9], [10], [11] and
cited papers therein.

From the main result (3.2.23) of Section 3, we find a functional sequence $%
(f_{n})_{n=1}^{\infty }$, with its $n$-th entries

(4.1)

\begin{center}
$f_{n}(z)$ $=$ $\sum_{k=0}^{[\frac{n}{2}]}(-1)^{k}\left( \frac{n}{k}\right)
\left( 
\begin{array}{c}
n-k-1 \\ 
k-1
\end{array}
\right) z^{n-2k}$.
\end{center}

for all $n$ $\in $ $\Bbb{N},$ satisfying

\begin{center}
$\left( X_{1}X_{2}\right) ^{n}+\left( (X_{1}X_{2})^{n}\right) ^{-1}$ $=$ $%
f_{n}(\varepsilon _{0})$ $I_{2}$ in $\mathcal{A}_{2},$
\end{center}

\strut where

\begin{center}
$\varepsilon _{0}$ $=$ $\det (X_{1})+\det (X_{2})-\det \left(
X_{1}+X_{2}\right) .$
\end{center}

For any arbitrarily fixed $n$ $\in $ $\Bbb{N},$ consider the
(non-alternative parts of) summands

(4.2)

\begin{center}
\strut $\left( \frac{n}{k}\right) \left( 
\begin{array}{c}
n-k-1 \\ 
k-1
\end{array}
\right) ,$ for $k$ $=$ $0,$ $1,$ ..., $[\frac{n}{2}],$
\end{center}

of $f_{n}(z)$ in (4.1).

Define such quantities of (4.2) by a form of a function,

\begin{center}
$g$ $:$ $\Bbb{N}\times \Bbb{N}_{0}$ $\rightarrow $ $\Bbb{C}$ by
\end{center}

(4.3)

\begin{center}
$g(n,$ $k)$ $\overset{def}{=}$ $\left\{ 
\begin{array}{ll}
\left( \frac{n}{k}\right) \left( 
\begin{array}{c}
n-k-1 \\ 
k-1
\end{array}
\right) & \text{if }k=0,1,\text{ ..., }[\frac{n}{2}] \\ 
&  \\ 
0 & \text{otherwise,}
\end{array}
\right. $
\end{center}

for all $\left( n,\text{ }k\right) $ $\in $ $\Bbb{N}\times \Bbb{N}_{0},$
where

\begin{center}
$\Bbb{N}_{0}$ $\overset{def}{=}$ $\Bbb{N}$ $\cup $ $\{0\}.$
\end{center}

\strut Note that the definition (4.3) of the function $g$ represents the
(non-alternating parts) of summands of $(f_{n})_{n=1}^{\infty }.$

The authors are not sure if the following theorem is already proven or not.
They could not find the proofs in their references. So, we provide the
following proof of the theorem.

\begin{theorem}
Let $g$ be a function from $\Bbb{N}\times \Bbb{N}_{0}$ into $\Bbb{C}$ be in
the sense of (4.3), Then

(4.4)

\begin{center}
$g(1,$ $0)$ $=$ $1,$
\end{center}

and

\begin{center}
$g\left( n+1,\text{ }k\right) $ $=$ $g(n,$ $k)$ $+$ $g(n-1,$ $k-1),$
\end{center}

for all $(n,$ $k)$ $\in $ $\left( \Bbb{N}\text{ }\setminus \text{ }1\right) $
$\times $ $\Bbb{N}$.
\end{theorem}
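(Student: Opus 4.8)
The plan is to read off the claimed recurrence for $g$ directly from the three-term recurrence already established for the polynomials $f_n$, rather than grinding through the binomial closed form by hand. Recall from the proof of Theorem (3.2.12), specifically (3.2.16) and (3.2.22), that the functional sequence satisfies $f_{n+1}(z) = zf_n(z) - f_{n-1}(z)$ for all $n \geq 2$, together with $f_1(z) = z$, and that by (4.1) each $f_n$ has the expansion $f_n(z) = \sum_{k=0}^{[\frac{n}{2}]}(-1)^k g(n,k) z^{n-2k}$, where $g$ is the function of (4.3). Since (4.3) sets $g(n,k) = 0$ whenever $k$ lies outside $\{0,1,\dots,[\frac{n}{2}]\}$, I would harmlessly extend each of these finite sums over all $k \geq 0$; this is the device that absorbs every boundary case automatically.

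For the base case I would use the second closed form recorded in (3.2.12), namely $\frac{n}{k}\binom{n-k-1}{k-1} = \frac{n}{n-k}\binom{n-k}{k}$, which is well defined at $k=0$ and gives $g(n,0) = \frac{n}{n}\binom{n}{0} = 1$ for every $n$; in particular $g(1,0) = 1$. Equivalently, $g(n,0)$ is simply the leading coefficient of the monic polynomial $f_n$.

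For the recurrence itself, I would substitute the extended expansions into $f_{n+1}(z) = zf_n(z) - f_{n-1}(z)$ and compare the coefficient of $z^{n+1-2k}$ on both sides. On the left this coefficient is $(-1)^k g(n+1,k)$; the term $zf_n(z)$ contributes $(-1)^k g(n,k)$; and reindexing $f_{n-1}(z) = \sum_j (-1)^j g(n-1,j) z^{(n-1)-2j}$ by $j = k-1$ shows that $f_{n-1}$ contributes $(-1)^{k-1} g(n-1,k-1)$ to that same power. Equating coefficients gives $(-1)^k g(n+1,k) = (-1)^k g(n,k) - (-1)^{k-1} g(n-1,k-1)$, and since $-(-1)^{k-1} = (-1)^k$, dividing through by $(-1)^k$ yields exactly $g(n+1,k) = g(n,k) + g(n-1,k-1)$. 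Because the recurrence for $f_n$ holds for $n \geq 2$ and the reindexing needs $k \geq 1$, this proves (4.4) on precisely the stated range $(\Bbb{N}\setminus\{1\})\times\Bbb{N}$.

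The delicate point, and the main thing to watch, is the interplay between the analytic formula and the ``$0$ otherwise'' clause in (4.3): the three quantities $g(n+1,k)$, $g(n,k)$, $g(n-1,k-1)$ have different nonzero ranges in $k$, since the top index $[\frac{n}{2}]$ jumps by one as $n$ increases through an even value. A purely factorial computation would therefore force a separate check at the boundary $k = [\frac{n+1}{2}]$, where $g(n,k)$ vanishes while the other two do not. The coefficient-comparison argument sidesteps this entirely, since a power $z^{n+1-2k}$ that is simply absent from $f_n$ has coefficient $0$, matching $g(n,k) = 0$ by definition. As an independent verification one can still confirm the identity by hand: factoring $\frac{(n-k-1)!}{k!(n-2k)!}$ out of $g(n,k)+g(n-1,k-1)$ reduces the bracket to $n + \frac{k(n-1)}{n-2k+1} = \frac{(n+1)(n-k)}{n-2k+1}$, whose numerator $n^2 - kn + n - k$ factors as $(n+1)(n-k)$, and the result collapses to $\frac{(n+1)(n-k)!}{k!(n+1-2k)!} = g(n+1,k)$.
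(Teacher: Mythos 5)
Your proof is correct, but it takes a genuinely different route from the paper's. The paper proves (4.4) head-on: using the factorial form $g(n,k)=\frac{n(n-k-1)!}{(n-2k)!\,k!}$ (its (4.5)), it expands $g(n,k)+g(n-1,k-1)$ and collapses the sum to $\frac{n+1}{k}\binom{(n+1)-k-1}{k-1}=g(n+1,k)$ --- essentially the same algebra as the verification in your closing paragraph (your version is in fact cleaner; the paper's display contains typos such as ``$(n-kk+1)!$'' and a sign slip in the numerator, though the conclusion is right). You instead read (4.4) off the already-established three-term recurrence $f_{n+1}(z)=zf_{n}(z)-f_{n-1}(z)$ of (3.2.16)/(3.2.22) by comparing coefficients of $z^{n+1-2k}$, with the ``$0$ otherwise'' clause of (4.3) absorbing the boundary case: when $n$ is odd and $k=[\frac{n+1}{2}]$, one has $g(n,k)=0$ while $g(n+1,k)=g(n-1,k-1)=2$, a case where the paper's factorial manipulation is not even defined (it would require $(n-2k)!$ with $n-2k<0$) and which the paper passes over silently; your coefficient argument handles it automatically, since the corresponding power is simply absent from $zf_{n}(z)$. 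What your route buys is economy and boundary-safety: the hard binomial work was done once in the proof of Theorem (3.2.12), so your derivation is nearly free. What it costs is logical order: the paper proves (4.4) independently precisely so that, in the corollary following the theorem, the polynomial recurrence (3.2.16) can be ``re-proved'' from (4.7); under your derivation that corollary becomes circular, since (3.2.16)/(3.2.22) is your input. So your argument is a valid proof of the theorem as stated, but if Section 4 is meant to stand as an independent derivation of the Lucas-triangle recurrence, the direct computation (which you also supply, with the boundary case checked separately) is the one to keep.
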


\begin{proof}
Observe first that:

(4.5)

\begin{center}
$
\begin{array}{ll}
g(n,k) & =\frac{n}{k}\left( 
\begin{array}{c}
n-k-1 \\ 
k-1
\end{array}
\right) =\frac{n}{k}\frac{\left( n-k-1\right) !}{(k-1)!(n-k-1-k+1)!} \\ 
&  \\ 
& =\frac{n(n-k-1)!}{(n-2k)!k!},
\end{array}
$
\end{center}

\strut whenever $g(n,$ $k)$ is non-zero, for $(n,$ $k)$ $\in $ $\Bbb{N}$ $%
\times $ $\Bbb{N}_{0}.$

\strut By (4.5), we obtain that

\strut (4.6)

\begin{center}
$g(1,$ $0)$ $=$ $\frac{1(1-0-1)!}{(1-0)!0!}$ $=$ $1,$
\end{center}

\strut

\begin{center}
$g(2,$ $0)$ $=$ $\frac{2(2-0-1)!}{(2-0)!0!}$ $=$ $1,$
\end{center}

and

\begin{center}
$g(2,$ $1)$ $=$ $\frac{2(2-1-1)!}{(2-2)!1!}$ $=$ $2.$
\end{center}

\strut

Consider now that:

$\qquad g(n,$ $k)+g(n-1,$ $k-1)$

\strut

$\qquad \qquad =$ $\frac{n}{k}\left( 
\begin{array}{c}
n-k-1 \\ 
k-1
\end{array}
\right) +\frac{n-1}{k-1}\left( 
\begin{array}{c}
n-1-(k-1)-1 \\ 
(k-1)-1
\end{array}
\right) $

\strut

$\qquad \qquad =$ $\frac{n(n-k-1)!}{(n-2k)!k!}$ $+$ $\frac{(n-1)(n-k-1)!}{%
(n-2k+1)!(k-1)!}$

by (4.5)

$\qquad \qquad =$ $\frac{n(n-2k+1)(n-k-1)!+k(n-1)(n-k-1)!}{(n-kk+1)!k!}$

\strut

$\qquad \qquad =$ $\frac{(n^{2}+nk+n-k)(n-k-1)!}{(n-2k+1)!k!}$

\strut

$\qquad \qquad =$ $\frac{(n+1)(n-k)!}{k(2n-2k+1)(k-1)!}$ $=$ $\frac{n+1}{k}%
\left( 
\begin{array}{c}
(n+1)-k-1 \\ 
k-1
\end{array}
\right) ,$

\strut and hence,

(4.7)

\begin{center}
$
\begin{array}{ll}
g(n,\text{ }k)+g(n-1,k-1) & =\frac{n+1}{k}\left( 
\begin{array}{c}
(n+1)-k-1 \\ 
k-1
\end{array}
\right) \\ 
&  \\ 
& =g(n+1,\text{ }k).
\end{array}
$
\end{center}

Therefore, one can get the recurrence relation (4.4), by (4.6) and (4.7).
\end{proof}

\strut By (4.4), we can realize that the family

\begin{center}
$\{g(n,$ $k)$ $:$ $(n,$ $k)$ $\in $ $\Bbb{N}\times \Bbb{N}_{0}\}$
\end{center}

satisfies the following diagram

(4.8)

\strut

\begin{center}
\strut $
\begin{array}{ccccccccccc}
& & & & & & & & & & 2 \\
& & &  & &  & &  &  &  1 \\
& & &  & &  & &  &  1  & & 2 \\
 &  &  &  &  &  &  & 1 & & 3 & \\
 &  &  &  &  &  & 1 &  & 4 &  & 2 \\
 &  &  &  &  & 1 &  & 5 &  & 5 &  \\
 &  &  &  & 1 &  & 6 &  & 9 &  & 2 \\
 & &  & 1 & & 7 &  & 14 &  & 7 &  \\
 &  & 1 &  & 8 &  & 20 & & 16 & & 2  \\
 & 1 &  & 9 &   & 27 &  & 30 &  & 9 &  \\
1 &  & 10 &  & 35 &  & 50 &  & 25 & & 2 \\
&  &  &  &  &  &  &  &  &  \\ 
&  &  & \vdots &  &  & \vdots &  &  & 
\end{array}
$
\end{center}

\strut \strut \strut where the rows of (4.8) represents $n$ $\in $ $\Bbb{N},$
and the columns of (4.8) represents $k$ $\in $ $\Bbb{N}_{0}$ of the
quantities $g(n,$ $k).$\strut

\strut

\textbf{Observation} The above diagram (4.8) is nothing but the \emph{Lucas
triangle} (e.g., [11]) induced by the \emph{Lucas numbers} (e.g., [7], [8],
[9] and [10]). $\square $

\strut

By using the above family $\left\{ g(n,\text{ }k)\right\} _{(n,\text{ }k)\in 
\Bbb{N}\times \Bbb{N}_{0}}$ of quantities obtained from (4.3), one can
re-write the $n$-th entries $f_{n}$ of (4.1) as follows;

(4.9)

\begin{center}
$f_{n}(z)$ $=$ $\sum_{k=0}^{[\frac{n}{2}]}$ $(-1)^{k}$ $\left( g(n,\text{ }%
k)\right) z^{n-2k},$
\end{center}

for all $n$ $\in $ $\Bbb{N}.$

The new expression (4.9) shows that the coefficients of $f_{n}$ is
determined by the Lucas numbers alternatively, by (4.8).

\begin{corollary}
The coefficients of the functions $f_{n}$ of (4.1) are determined by the
Lucas numbers in the Lucas triangle (4.8) alternatively. $\square $
\end{corollary}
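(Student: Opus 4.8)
The plan is to obtain the corollary directly from the reformulation (4.9) together with the recurrence established in the preceding theorem, with no new computation required. First I would note that (4.9) rewrites the $n$-th entry of the functional sequence as
\[
f_{n}(z) = \sum_{k=0}^{[\frac{n}{2}]} (-1)^{k}\, g(n,\,k)\, z^{n-2k},
\]
so that the coefficient of the monomial $z^{n-2k}$ in $f_{n}$ is \emph{exactly} $(-1)^{k} g(n,k)$. Thus the coefficients of $f_{n}$ are, up to the alternating factor $(-1)^{k}$, precisely the numbers $g(n,k)$ of (4.3), and the entire assertion reduces to identifying these numbers with entries of the Lucas triangle.

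Next I would invoke the recurrence (4.4): the array $\{g(n,k)\}_{(n,k)\in \Bbb{N}\times \Bbb{N}_{0}}$ satisfies $g(1,0)=1$ and $g(n+1,k)=g(n,k)+g(n-1,k-1)$ for all $(n,k)\in (\Bbb{N}\setminus 1)\times \Bbb{N}$. Since this recurrence only governs the interior of the array, I would supplement it with the border values read off from the factorial form (4.5): for $k=0$ one gets $g(n,0)=\frac{n(n-1)!}{n!\,0!}=1$, giving the constant left edge, while the terminal entry of each row yields $g(2m,m)=2$ for even index and $g(2m+1,m)=2m+1$ for odd index. With these edge values fixed, the recurrence (4.4) reproduces the displayed array (4.8) entry by entry, as already checked in (4.6).

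By the Observation following (4.8), the array so generated is exactly the Lucas triangle, whose entries are the Lucas numbers. Combining this identification with the coefficient reading of the first step, I conclude that the coefficient of $z^{n-2k}$ in $f_{n}$ is the Lucas-triangle entry $g(n,k)$ carrying the sign $(-1)^{k}$; hence the coefficients of $f_{n}$ are determined by the Lucas numbers with the sign alternating in $k$, which is the statement of the corollary.

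I expect no genuine analytic obstacle, since the real content was already carried by the recurrence theorem (4.4) and by the Observation identifying (4.8) as the Lucas triangle; the corollary is essentially an assembly of these facts. The one point requiring care is the boundary bookkeeping: because (4.4) furnishes only the interior recurrence (for $n\geq 2$ and $k\geq 1$), I must confirm separately, via (4.5), that the edge values $g(n,0)$ and the terminal entries of each row coincide with the borders of the Lucas triangle, so that this \emph{shifted} recurrence $g(n+1,k)=g(n,k)+g(n-1,k-1)$ is not conflated with the Pascal recurrence of a different array. Once those borders are verified, the conclusion follows at once.
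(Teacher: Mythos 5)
Your proposal is correct and follows essentially the same route as the paper: the corollary is treated there as an immediate assembly of the rewriting (4.9), the recurrence theorem (4.4), and the Observation identifying the array (4.8) with the Lucas triangle, which is exactly what you do. Your additional verification of the boundary entries via (4.5) (the left edge $g(n,0)=1$ and the row-terminal values $g(2m,m)=2$, $g(2m+1,m)=2m+1$) is a sound refinement of a point the paper leaves implicit, but it does not change the substance of the argument.
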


\strut Moreover, we can verify that

$\qquad f_{n+1}(z)$ $=$ $\sum_{k=0}^{[\frac{n+1}{2}]}$ $(-1)^{k}\left( g(n+1,%
\text{ }k)\right) z^{n+1-2k}$

\strut

$\qquad \qquad =$ $\sum_{k=0}^{[\frac{n+1}{2}]}$ $(-1)^{k}\left( g(n,\text{ }%
k)+g(n-1,\text{ }k-1)\right) z^{n+1-2k}$

\strut

$\qquad \qquad =$ $\sum_{k=0}^{[\frac{n+1}{2}]}$ $(-1)^{k}\left( g(n,\text{ }%
k)\right) $ $z^{n+1-2k}$

\strut

$\qquad \qquad \qquad \qquad \qquad +$ $\sum_{k=1}^{[\frac{n+1}{2}%
]}(-1)^{k}\left( g(n-1,\text{ }k-1)\right) z^{n-2k}$

\strut

$\qquad \qquad =$ $z\left( \sum_{k=0}^{[\frac{n}{2}]}(-1)^{k}\left( g(n,%
\text{ }k-1)\right) z^{n-2k}\right) $

\strut

$\qquad \qquad \qquad \qquad \qquad -$ $\sum_{k=0}^{[\frac{n-1}{2}%
]}(-1)^{k}\left( g(n-1,\text{ }k-1)\right) z^{n-1-2k}$

\strut

$\qquad \qquad =$ $zf_{n}(z)-$ $f_{n-1}(z).$

\strut

i.e., from the recurrence relation (4.7), we can re-prove the recurrence
relation (3.2.16).

\begin{corollary}
\strut Let

\begin{center}
$f_{n}(z)$ $=$ $\sum_{k=0}^{[\frac{n}{2}]}$ $(-1)^{k}\left( g(n,\text{ }%
k)\right) z^{n-2k}$
\end{center}

be in the sense of (4.9), where $\{g(n,$ $k)\}_{(n,k)\in \Bbb{N}\times \Bbb{N%
}_{0}}$ are in the sense of (4.3). Then

\begin{center}
$f_{1}(z)$ $=$ $z$ and $f_{2}(z)$ $=$ $z^{2}-2,$
\end{center}

and

\begin{center}
$f_{n+1}(z)$ $=$ $zf_{n}(z)-f_{n-1}(z),$
\end{center}

for all $n$ $\geq $ $2$ in $\Bbb{N}.$ $\square $
\end{corollary}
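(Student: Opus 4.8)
The plan is to derive everything directly from the closed form (4.9), namely $f_{n}(z)=\sum_{k=0}^{[\frac{n}{2}]}(-1)^{k}g(n,k)\,z^{n-2k}$, together with the recurrence for $g$ just established in (4.4). First I would settle the two base cases by direct substitution. For $n=1$ the sum in (4.9) reduces to its single $k=0$ term, so $f_{1}(z)=g(1,0)\,z=z$, using $g(1,0)=1$ from (4.6). For $n=2$ the index runs over $k=0,1$, giving $f_{2}(z)=g(2,0)\,z^{2}-g(2,1)=z^{2}-2$, by the values $g(2,0)=1$ and $g(2,1)=2$ recorded in (4.6). This confirms $f_{2}(z)=z^{2}-2$, consistent with the direct matrix computation $(X_{1}X_{2})^{2}+(X_{2}X_{1})^{2}=(\varepsilon_{0}^{2}-2)I_{2}$ found earlier.

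For the recurrence I would substitute (4.9) into $f_{n+1}(z)$ and apply the relation $g(n+1,k)=g(n,k)+g(n-1,k-1)$ from (4.4) to each coefficient, splitting the result as
\[
f_{n+1}(z)=\sum_{k}(-1)^{k}g(n,k)\,z^{n+1-2k}+\sum_{k}(-1)^{k}g(n-1,k-1)\,z^{n+1-2k}.
\]
The first sum is immediately $z\sum_{k}(-1)^{k}g(n,k)z^{n-2k}=z\,f_{n}(z)$. In the second sum I would reindex by $j=k-1$; this turns $(-1)^{k}$ into $-(-1)^{j}$ and $z^{n+1-2k}$ into $z^{(n-1)-2j}$, so the second sum becomes exactly $-f_{n-1}(z)$. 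This is precisely the displayed computation given just before the statement, now organized around the $g$-recurrence rather than the binomial manipulation used earlier in Theorem (3.2.12), and it yields $f_{n+1}(z)=z\,f_{n}(z)-f_{n-1}(z)$.

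The delicate point, and the main obstacle, is the bookkeeping of the summation limits: the upper bound $[\frac{n}{2}]$ depends on the parity of $n$ and differs among the three sums for $f_{n+1}$, $f_{n}$, and $f_{n-1}$. I would handle this exactly as in the proof of (3.2.12), by extending all three sums to a common range using the convention (4.3) that $g(n,k)=0$ whenever $k>[\frac{n}{2}]$, so that the padded terms contribute nothing. One must then check separately for even and odd $n$ that the top term of the reindexed $g(n-1,k-1)$-sum either merges cleanly into the $z\,f_{n}(z)$ piece or vanishes; once these boundary terms are verified to drop out, the two sums reassemble into $z\,f_{n}(z)-f_{n-1}(z)$, completing the induction for all $n\geq 2$.
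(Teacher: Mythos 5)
Your proposal is correct and follows essentially the same route as the paper: the paper's own argument (the displayed computation immediately preceding the corollary) likewise substitutes the closed form (4.9), applies the recurrence $g(n+1,k)=g(n,k)+g(n-1,k-1)$ from (4.4)/(4.7), splits the sum into a $g(n,k)$-part giving $z\,f_{n}(z)$ and a reindexed $g(n-1,k-1)$-part giving $-f_{n-1}(z)$, with the base values $g(1,0)=1$, $g(2,0)=1$, $g(2,1)=2$ from (4.6) supplying $f_{1}(z)=z$ and $f_{2}(z)=z^{2}-2$. Your extra attention to the summation limits is sound (and in fact the bookkeeping is lighter than you fear: since $\left[\frac{n+1}{2}\right]-1=\left[\frac{n-1}{2}\right]$, the reindexed sum matches $f_{n-1}$ exactly, and only the top term of the $z\,f_{n}(z)$ piece for odd $n$ needs the zero-convention of (4.3) to vanish).
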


\strut Again, the above corollary demonstrates the connection between our
Hilbert-space representations of $\Gamma _{2}^{2}$ and Lucas numbers.

\strut \strut

\strut \strut \strut \strut

\textbf{References}

\strut \strut

\strut{\small [1] \ \ I. Cho, Free Probability on Hecke Algebras and Certain Group 
}$C^{*}${\small -Algebras Induced by Hecke Algebras, Opuscula Math., (2015)
To Appear.} 

{\small [2] \ \ I. Cho, Representations and Corresponding Operators Induced
	by Hecke Algebras, DOI: 10.1007/s11785-014-0418-7, (2014) To Appear.}

{\small [3] \ \ R. Speicher, Combinatorial Theory of the Free Product with
	Amalgamation and Operator-Valued Free Probability Theory, Momoir, Ame. Math.
	Soc., 627, (1998)}

{\small [4] \ \ D. Voiculescu, K. J. Dykemma, and A. Nica, Free Random
	Variables, CRM Monograph Ser., vol.1, ISBN-13: 978-0821811405, (1992)}

{\small [5] \ \ I. Cho, and T. Gillespie, Free Probability on Hecke
	Algebras, DOI: 10.1007/s11785-014-0403-1, (2014) To Appear.}

{\small [6] \ \ I. Cho, The Moments of Certain Perturbed Operators of the
	Radial Operator in the Free Group Factors }$L(F_{N})${\small , J. Anal.
	Appl., vol. 5, no.3, (2007) 137 - 165.}

{\small [7] \ \ M. Asci, B. Cekim, and D. Tasci, Generating Matrices
	for Fibonaci, Lucas and Spectral Orthogonal Polynomials with Algorithms,
	Internat. J. Pure Appl. Math., 34, no. 2, (2007) 267 - 278.}

{\small [8] \ \ M. Cetin, M. Sezer, and C. S\"{u}ler, Lucas Polynomial
	Approach for System of High-Order Linear Differential Equations and Residual
	Error Estimation, Math. Problems Eng., Article ID: 625984, (2015)}

{\small [9] \ \ A. Nalli, and P. Haukkanem, On Generalized Fibonacci and
Lucas Polynomials, Chaos, Solitons \& Fractals, 42, (2009) 3179 - 3186.}

{\small [10] J. Wang, Some New Results for the }$(p,$ $q)${\small -Fibonacci
and Lucas Polynomicals, Adv. Diff. Equ., 64, (2014) }

{\small [11] A. T. Benjamin, The Lucas Triangle Recounted, available at }

\begin{center}
{\small http://www.math.hmc.edu/\symbol{126}%
benjamin/papers/LucasTriangle.pdf. }
\end{center}

\strut
\strut

\textbf{Student Author}

{\small Ryan Golden is a senior at St. Ambrose University double majoring in Mathematics and Biology with minors in Computer Science and Chemistry.  He plans to pursue a PhD in Applied Mathematics, specifically Mathematical and Computational Neuroscience.  While his main interests are in the applications of mathematics, he is passionate about theoretical mathematics as well, and is indebted to Dr. Cho for providing the opportunity to work with and learn from him throughout his undergraduate career.}

\strut
\strut

\textbf{Press Summary}
In this paper, the authors studied matricial representations of certain finitely
presented groups generated by $N$-generators of order-2. As an
application, they considered corresponding group algebras under representation. Specifically, they characterized the
inverses of all group elements in terms
of matrices in the group algebra. From the study of this
characterization, the authors realized there are close relations between the trace of
the radial operator of the algebras and the Lucas numbers appearing in
the Lucas triangle.

\end{document}